\newtheorem{theorem}{Theorem}[section]
\newtheorem{corollary}[theorem]{Corollary}
\newtheorem{lemma}[theorem]{Lemma}
\newtheorem{sublemma}[theorem]{Sublemma}
\newtheorem*{andreev}{Andreev's Theorem}
\DeclareMathOperator{\link}{link}
\DeclareMathOperator{\vol}{vol}
\theoremstyle{remark}
\newcommand{\HH}{{\mathbb H}}
\newcommand{\bS}{{\mathbb S}}
\begin{document}

\title{Organizing Volumes of Right-Angled Hyperbolic Polyhedra}
\author{Taiyo Inoue}
\address{Department of Mathematics, University of California, Berkeley}
\email{inoue@math.berkeley.edu}

\begin{abstract} This article defines a pair of combinatorial operations on the combinatorial structure of compact right-angled hyperbolic polyhedra in dimension three called decomposition and edge surgery.  It is shown that these operations simplify the combinatorics of such a polyhedron, while keeping it within the class of right-angled objects, until it is a disjoint union of L\"obell polyhedra, a class of polyhedra which generalizes the dodecahedron.  Furthermore, these combinatorial operations are shown to have geometric realizations which are volume decreasing.  This allows for an organization of the volumes of right-angled hyperbolic polyhedra and allows, in particular, the determination of the polyhedra with smallest and second smallest volumes.  
\end{abstract}

\maketitle

\section{Introduction}

Three-dimensional right-angled hyperbolic compact polyhedra are a well understood family of hyperbolic objects.  A complete classification of them is provided by Andreev's Theorem \cite{And} \cite{RHD}, which characterizes when a combinatorial polyhedron admits a geometric realization in $\HH^3$ with a given non-obtuse dihedral angle specified at each edge.  A simple corollary of this result is a complete characterization of combinatorial types of right-angled hyperbolic polyhedra by means of a small set of conditions on the combinatorics.

Nonetheless, the problem of determining the volume of a given right-angled hyperbolic polyhedra remains difficult.  This article will attempt to organize these volumes.  

By Mostow rigidity, the hyperbolic structure of a three dimensional right-angled hyperbolic polyhedron is unique and is determined by 
the combinatorial structure of the polyhedron.  In particular, volume is a combinatorial invariant for those polyhedra which admit a right-angled hyperbolic realization.  

Following this spirit, volumes of these polyhedra will be organized according to a combinatorial 
process which, at each step, reduces the complexity of the polyhedron.  The 
two combinatorial operations used in this process are called decomposition and edge surgery.  The former is a splitting of the polyhedron under certain conditions, while the latter is the deletion of a particular edge.  These operations, when applied to sufficiently 
complicated polyhedra, reduce the complexity of the polyhedron until, after enough applications, the resulting object is a finite disjoint union of polyhedra from an infinite family of exceptional polyhedra, namely the L\"obell polyhedra.  This family of polyhedra have the property that these operations cannot be applied to them and their geometry, in particular their volume, is very well understood.

It will be shown that for polyhedra not of L\"obell type, either the polyhedron is decomposable or there exists an edge for which edge surgery can be performed.  Therefore, every such polyhedron will eventually be transformed into a family of L\"obell ones via an iteration of this process.

These combinatorial operations will then be studied from a geometric point of view.  Decomposition bears a strong resemblance to the decomposition of hyperbolic Haken manifolds along incompressible subsurfaces.  In particular, after passing to a manifold cover of a particular sort constructed in Section 5, this is precisely what decomposition is.  As such, the result of Agol, Dunfield, Storm and Thurston \cite{ADST}, which gives a description of the effect of Haken decomposition on volumes, can be applied to show that decomposition of right-angled hyperbolic polyhedra is not volume increasing.

Next, it will be shown that the geometric realization of edge surgery is to ``unbend'' the polyhedron along the edge that is surgered.   This means that the polyhedron is deformed so that the dihedral angle measure along this edge increases from $\frac{\pi}{2}$ to $\pi$ while the dihedral angle measure of every other edge is constant and equal to $\frac{\pi}{2}$.  The main sticking point with this is that this deformation passes through obtuse-angled polyhedra which is not the purview of Andreev's theorem.  However, results of Rivin and Hodgson \cite{RH} generalizing Andreev's theorem will imply that this deformation exists.  Then by means of the Schl\"afli differential formula, it will be shown that the geometric realization of this operation is volume decreasing.  

Putting these things together gives chains of inequalities of volumes of right-angled hyperbolic polyhedra determined by the decompositions and edge surgeries used to go from an initial polyhedron to a disjoint union of L\"obell polyhedra.  Since every right-angled hyperbolic 
polyhedron which is not of L\"obell type can be reduced in this way, a method for organizing volumes of right-angled hyperbolic polyhedra is obtained.  This is summarized in the following theorem, which is the main result of this article: 

\smallskip

\noindent{\bf Theorem \ref{organization}.} {\sl Let $P_0$ be a compact right-angled hyperbolic polyhedron.  Then there exists a sequence of disjoint unions of right-angled hyperbolic polyhedra $P_1, P_2, \dots, P_k$ such that for $i=1,\dots, k$, $P_i$ is gotten from $P_{i-1}$ by either a decomposition or edge surgery, and $P_k$ is a set of L\"obell polyhedra. Furthermore, 
$$\vol(P_0) \geq \vol(P_1) \geq \vol(P_2) \geq \dots \geq \vol(P_k).$$
}
As the volumes of L\"obell polyhedra can be explicitly calculated, the right-angled hyperbolic polyhedra of smallest and second smallest volumes can be identified easily:

\smallskip

\noindent{\bf Corollary \ref{smallestvol}.} {\sl The compact right-angled hyperbolic polyhedron of smallest volume is $L(5)$ (a dodecahedron) and the second smallest is $L(6)$ where $L(n)$ denotes the nth L\"obell polyhedron.}

\smallskip

In fact, if one had an oracle to inform them about the precise volume of a given polyhedron, then this result would provide an algorithm terminating in finite time to determine the ordering of volumes in the sense that the right-angled hyperbolic polyhedron of $n$th smallest volume would be identifiable.

It should be noted that this article deals exclusively with compact right-angled hyperbolic polyhedra.  Their ideal siblings, those right-angled hyperbolic polyhedra which have vertices lying on the ideal boundary of $\HH^3$, are not covered.  There are examples of ideal right-angled polyhedra whose volumes are smaller than that of the smallest compact right-angled hyperbolic polyhedron.  For example, the right-angled ideal octahedron, all of whose vertices are ideal, has volume strictly smaller than that of the right-angled dodecahedron.

Furthermore, at the present moment, analogous questions about four dimensional and higher dimensional right-angled hyperbolic polyhedra are largely a mystery and cannot be dealt with using the techniques of this article. Indeed, there is no technology which rivals the power of Andreev's theorem to even begin constructing examples of such objects in dimension 4 and higher.  
%Chapter 2 presents most of the background material for what follows, including a discussion of Andreev's theorem and characterization of right-angled hyperbolic polyhedra.  The L\"obell polyhedra are defined and described, with a study of their volumes given.  This chapter also combinatorially defines decomposition and gives some results related to it.

%Chapter 3 deals with the two operations called decomposition and edge surgery in detail.  The first section views right-angled polyhedra as orbifolds and gives a construction of eightfold manifold covers that is very much in the spirit of the work of Mednykh and Vesnin \cite{MV}.  

%The second section applies this construction to a polyhedron which is decomposable.  In this case, the manifold cover is Haken.  Furthermore, after splitting the manifold along the incompressible surface, each manifold component covers a polyhedron obtained by decomposing the original polyhedron.  

\smallskip

\noindent {\bf Acknowledgments:} This article is essentially identical to my Ph.D dissertation supervised by Rob Kirby and Alan Reid filed at U.C. Berkeley in 2007.  I am grateful to them for inspiration, knowledge and support.  I thank the referee for her/his comments as well, as well as the journal's editors.

\section{Definitions and Notations}

The setting for this article is ${\mathbb H}^3$.  More generally, ${\HH^n}$ is the unique simply connected Riemannian manifold of dimension $n$ with constant sectional curvatures equal to $-1$.  

If $F$ is a totally geodesic $2$--plane in ${\mathbb H}^3$ then a {\it hyperbolic half-space} $H_F$ is a closed subset of $\HH^3$ bounded by $H$.  Similarly, if $g$ is a geodesic in $\HH^2$, then a {\it hyperbolic half-plane} is a closed subset of $\HH^2$ bounded by $g$.   

A {\it hyperbolic polyhedron} is a nonempty compact convex transverse intersection of a finite number of hyperbolic half-spaces. For any hyperbolic polyhedron $P$, there will be a unique minimal set of hyperbolic half spaces whose intersection is $P$.  It will be assumed that this minimal set will always be the one defining $P$.  

If $H_F$ is a hyperbolic half-space defining $P$, a {\it face} of $P$ is the intersection of $F$ with $P$.  The $2$--plane $F$ is said to be the plane {\it supporting} the face.  As $F$ is itself isometric to the hyperbolic plane, it is evident that a face is itself isometric to a {\it hyperbolic polygon}, that is a nonempty convex compact transverse intersection of a finite number of hyperbolic half-planes.  By abusing notation, the face supported by $F$ will often be denoted by the same letter $F$.  

If $P$ is a hyperbolic polyhedron, then an {\it edge} is a nonempty intersection of two distinct faces of $P$ containing more than one point. A {\it vertex} is a nonempty intersection of three or more distinct faces of $P$.

A {\it combinatorial polyhedron} is a 3--ball whose boundary sphere is equipped with a cell structure whose 0--cells, 1--cells and 2--cells will also be called vertices, edges and faces respectively, and which can be realized as a convex polyhedron.  By Steinitz's theorem, such objects are exactly those whose 1--skeletons are simple and 3--connected graphs.  A hyperbolic polyhedron has a natural description as a combinatorial polyhedron.  Passage between the combinatorial perspective and the geometric one will often be done without mention.

Let $c$ be a simple closed curve on $\partial P$ which intersects transversely the interior of exactly $k$ distinct edges.  Such a curve is called a {\it k--circuit}.  A $k$--circuit is a {\it prismatic k--circuit} if the endpoints of all the edges which $c$ intersects are distinct.  Often the distinction between a $k$--circuit $c$ and the edges it intersects will be blurry, if not completely nonexistent.  See Figure \ref{fig: pc} for examples of prismatic circuits in combinatorial polyhedra.

\begin{figure}
	\centering
	\includegraphics[bb=14 14 271 231]{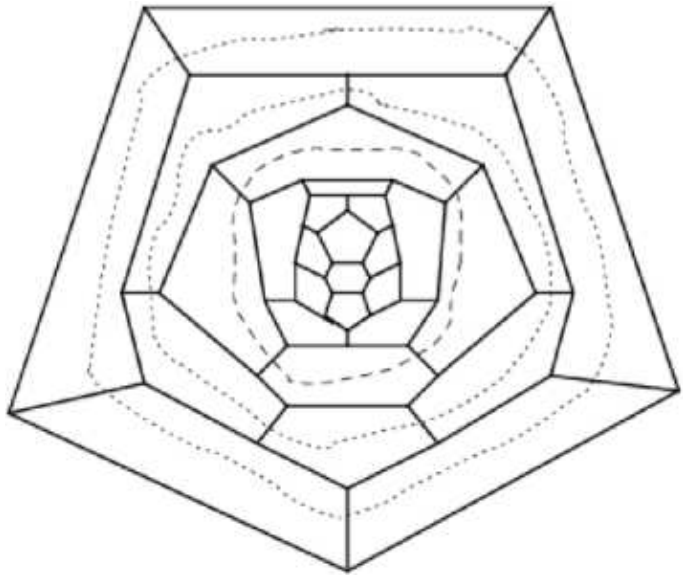}
% p4c.eps: 300dpi, width=2.18cm, height=1.84cm, bb=14 14 271 231
	\caption{This is the 1--skeleton of a polyhedron with prismatic circuits.  The dashed curve is a prismatic 4--circuit and the dotted curves are prismatic 5--circuits.}
	\label{fig: pc}
\end{figure}

Let $e$ be an edge of $P$ given by the intersection of a pair of distinct faces $F$ and $G$ which are supported by planes also denoted $F$ and $G$.  Then the {\it interior dihedral angle} (often simply the {\it dihedral angle}) at $e$ is the dihedral angle formed by the planes $F$ and $G$ in the interior of $P$.  Note that because $P$ is a convex set, the interior dihedral angle of an edge always has measure strictly smaller than $\pi$.  The {\it exterior dihedral angle} at $e$ is the dihedral angle formed by $F$ and $G$ in the exterior of $P$ which is the supplement of the interior dihedral angle.

%Let $v$ be a vertex of a hyperbolic polyhedron $P$.  Let $S_v$ be a  sphere centered at $v$ small enough so that it only intersects those faces of $P$ which contain $v$.  Then $S_v \cap P$ is a spherical polygon. After rescaling so that $S_v$ has unit radius, this polygon is called the {\it link} of $v$ and is denoted $\link(v)$.  Note that the angles of $\link(v)$ have measure equal to the inner dihedral angle measures of the edges which emanate from $v$, and the lengths of edges of $v$ are the angle measures of the faces containing $v$ (viewed as polygons) at $v$.  

\section{Right-Angled Hyperbolic Polyhedra}

The primary objects of study in this article are {\it right-angled hyperbolic polyhedra} which are hyperbolic polyhedra all of whose dihedral angles have measure equal to $\frac{\pi}{2}$.  A hyperbolic polyhedron all of whose dihedral angles have measure less than or equal to $\frac{\pi}{2}$ is said to be {\it non-obtuse}.  When one restricts attention to this class, there are more constraints placed on the combinatorics of the polyhedron than just Steinitz's Theorem.  A set of necessary and sufficient conditions for the existence of a non-obtuse hyperbolic polyhedron is described by Andreev's Theorem \cite{And}:

%For example:

%\begin{theorem} If $P$ is a non-obtuse hyperbolic polyhedron, then each vertex is the intersection of exactly $3$ faces.  In other words, the $1$-skeleton of $P$ is a trivalent graph.  
%\end{theorem}

%\noindent{\bf Proof:} Let $v$ be a vertex of $P$ and suppose $k$ faces of $P$ contain $v$.  Consider the link of $v$.  This is a spherical polygon whose angles with measures $a_1, a_2, \dots, a_k$ are all less than or equal to $\frac{\pi}{2}$.  Thus, by the Gauss-Bonnet formula
%$$\frac{k\pi}{2} \geq a_1 + a_2 + \dots a_k = \pi(k-2) + \area(\link(v))$$
%which implies $2\pi > \frac{k\pi}{2}$ which in turn implies $k=3$.  
%\medskip
%\qed
%\medskip

%Not every combinatorial polyhedron with trivalent $1$-skeleton has a geometric realization as a hyperbolic polyhedron with non-obtuse dihedral angles.  Additional conditions are furnished by the interaction of the combinatorics of the polyhedra with the specification of dihedral angle measure at each edge.  A set of necessary and sufficient conditions is described by Andreev's Theorem.
  
\begin{andreev} A combinatorial polyhedron $P$ which is not isomorphic to a tetrahedron or a triangular prism has a geometric realization in $\HH^3$ with interior dihedral angle measures $0 < \theta_i \leq \frac{\pi}{2}$ at edge $e_i$ if and only if:

\setcounter{enumi}{0}
\begin{enumerate}

\item The $1$--skeleton of $P$ is trivalent.

\item If $e_i, e_j, e_k$ are distinct edges which meet at a vertex, then $\theta_i + \theta_j + \theta_k > \pi$.

\item If $e_i, e_j, e_k$ form a prismatic $3$--circuit, then $\theta_i + \theta_j + \theta_k < \pi$.

\item If $e_i, e_j, e_k, e_l$ form a prismatic $4$--circuit, then $\theta_i + \theta_j + \theta_k + \theta_l < 2\pi$.

\end{enumerate}

This geometric realization is unique up to isometry of $\HH^3$. 
\end{andreev}

Andreev's original statement and proof of this result in 1970 \cite{And} contains a flaw in one of its combinatorial arguments.  A corrected proof is presented in Roeder, Hubbard and Dunbar's paper \cite{RHD}.  Andreev's theorem was also proved independently as a consequence of Rivin and Hodgson's generalization \cite{RH}.  Details of this can be found in Hodgson's article \cite{H}.

The prismatic circuit conditions in Andreev's Theorem can be thought of as a cone manifold translation of the concepts ``irreducible'' and ``atoroidal''.  For example, a prismatic $3$--circuit implies the existence of a topologically embedded triangle in $P$ whose vertices lie on the edges of the circuit.  If the dihedral angle measures of these edges add up to more $\pi$, then geometrically this triangle is positively curved and so can be thought of as the analogue of an embedded $2$--sphere in a three manifold.  That the $3$--circuit is prismatic is what implies the ``incompressibility'' of the triangle. If the dihedral angle measures add up to $\pi$ exactly, then the embedded triangle is flat, and so can be thought of as the analogue of an incompressible torus.  A similar discussion works for prismatic $4$--circuits.  Similarly, the second condition is a cone manifold version of the ``spherical link'' condition for manifolds.

Note that for prismatic $k$--circuits with $k \geq 5$, an embedded polygon whose vertices lie on these edges is geometrically negatively curved if the polyhedron is non-obtuse.  However, this might fail to be the case without the non-obtuse condition.  Thus, the classification
of hyperbolic polyhedra without the non-obtuse restriction is necessarily more subtle.  I Rivin and C Hodgson have accomplished this generalization which will be discussed later.

By requiring that all dihedral angle measures be $\frac{\pi}{2}$, Andreev's classification of right-angled hyperbolic polyhedra becomes purely combinatorial.  In fact, the following classification of right-angled hyperbolic polyhedra was done by A Pogorelov in 1967 \cite{Pog} before Andreev's work:

\begin{corollary} \label{rightclassified} A combinatorial polyhedron $P$ has a geometric realization in $\HH^3$ as a right-angled hyperbolic polyhedron if and only if:

\setcounter{enumi}{0}
\begin{enumerate}

\item The $1$--skeleton of $P$ is trivalent.

\item There are no prismatic $3$ or $4$--circuits.
\end{enumerate}
This geometric realization is unique up to isometry.

\end{corollary}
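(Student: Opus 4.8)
The plan is to read this off Andreev's Theorem by specializing all dihedral angle measures to $\frac{\pi}{2}$, together with a direct check of the two combinatorial types that theorem excludes. First I would substitute $\theta_i = \frac{\pi}{2}$ for every edge into conditions (2), (3), (4) of Andreev's Theorem. Condition (2) becomes $\frac{\pi}{2}+\frac{\pi}{2}+\frac{\pi}{2} = \frac{3\pi}{2} > \pi$, which holds automatically; condition (3) becomes $\frac{3\pi}{2} < \pi$, which is false, so it can hold only vacuously, i.e. exactly when $P$ has no prismatic $3$-circuit; and condition (4) becomes $2\pi < 2\pi$, again false, so it holds exactly when $P$ has no prismatic $4$-circuit. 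Hence for any combinatorial polyhedron $P$ not isomorphic to a tetrahedron or a triangular prism, Andreev's Theorem asserts that $P$ admits a right-angled hyperbolic realization, unique up to isometry, if and only if its $1$-skeleton is trivalent and $P$ has no prismatic $3$- or $4$-circuit. That is precisely the statement for such $P$, so what remains is (a) to confirm that trivalence and the absence of small prismatic circuits are genuinely \emph{necessary} for a right-angled realization even for the excluded combinatorial types, and (b) to dispose of the tetrahedron and the triangular prism.

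For (a), I would argue the necessity of trivalence at a single vertex: if $k\geq 3$ faces of a right-angled hyperbolic polyhedron meet at a vertex, the link of that vertex is a convex spherical $k$-gon whose interior angles are the $k$ dihedral angles, all equal to $\frac{\pi}{2}$; by Gauss--Bonnet its area is $\frac{k\pi}{2}-(k-2)\pi>0$, forcing $k<4$, hence $k=3$. The necessity of the absence of prismatic $3$- and $4$-circuits is the ``easy direction'' of Andreev's conditions (3) and (4), and its proof does not use that $P$ is non-exceptional: a prismatic $k$-circuit produces a topologically embedded, incompressible $k$-gon with one vertex on each edge of the circuit, whose curvature is governed by the sum of the dihedral angles along the circuit; for $k=3$ this sum is $\frac{3\pi}{2}>\pi$ and for $k=4$ it is $2\pi$, so in both cases the embedded polygon is non-negatively curved, which cannot occur inside $\HH^3$. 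Thus any right-angled realization forces conditions (1) and (2) of the corollary.

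For (b), the triangular prism contains a prismatic $3$-circuit --- the three edges joining its two triangular faces, whose six endpoints are all distinct --- so it fails condition (2) of the corollary, and by the previous paragraph it has no right-angled realization; the corollary holds for it vacuously. The tetrahedron, however, must be set aside: its $1$-skeleton $K_4$ is trivalent and, having only four vertices, it supports no prismatic circuit at all, so it satisfies conditions (1) and (2), yet it has no compact right-angled hyperbolic realization --- for instance because the Gram matrix of the four outward unit face normals of such a realization would have to be the identity, which has Euclidean rather than Lorentzian signature. The honest statement is therefore that the corollary holds for every combinatorial polyhedron other than the tetrahedron, which one may encode by reading condition (2) together with ``$P$ is not a tetrahedron''.

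I do not expect any genuinely difficult step: the corollary is essentially a bookkeeping consequence of Andreev's Theorem once all angles are set to $\frac{\pi}{2}$. The only place that needs care --- and the point I would flag as the main obstacle --- is precisely that Andreev's Theorem as quoted speaks only about polyhedra different from the tetrahedron and the triangular prism, so one must supply the necessity of the combinatorial conditions for those two types by hand and, in particular, rule out a right-angled tetrahedron by a direct signature argument.
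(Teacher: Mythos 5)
Your derivation is correct and is exactly the route the paper intends: Corollary \ref{rightclassified} is stated without an explicit proof, as the specialization of Andreev's Theorem to $\theta_i = \frac{\pi}{2}$ for every edge, under which condition (2) of Andreev holds automatically and conditions (3) and (4) can only hold vacuously, giving precisely trivalence plus the absence of prismatic $3$-- and $4$--circuits, with uniqueness up to isometry coming from the rigidity clause of Andreev's Theorem. Your extra care with the two excluded combinatorial types is well placed rather than redundant: the triangular prism is disposed of by your prismatic $3$--circuit observation (or, even faster, by the paper's result that every face of a right-angled hyperbolic polyhedron is a right-angled hyperbolic polygon, so triangular faces are impossible), and your point about the tetrahedron identifies a genuine, if minor, imprecision in the statement as printed: $K_4$ is trivalent and supports no prismatic $3$-- or $4$--circuit, yet a compact right-angled realization is impossible (your signature argument, or again the triangular-face argument), which is why standard formulations of Pogorelov's theorem explicitly exclude the tetrahedron, equivalently require at least five faces. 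So your reading --- condition (2) together with the proviso that $P$ is not a tetrahedron --- is the correct one, and nothing else in your argument needs repair.
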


The geometry of a face of a right-angled polyhedron is described by the following theorem:

\begin{theorem} If $P$ is a right-angled hyperbolic polyhedron, then all of its faces are right-angled hyperbolic polygons.
\end{theorem}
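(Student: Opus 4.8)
The plan is to reduce the statement to a local computation at each vertex of each face and then analyse the link of that vertex. Fix a right-angled hyperbolic polyhedron $P$, a face $F$ of $P$, and a vertex $v$ of $P$ lying on $F$. As noted in Section~2, a face of a hyperbolic polyhedron is automatically isometric to a hyperbolic polygon, so it suffices to show that every interior angle of the polygon $F$ equals $\frac{\pi}{2}$. I would first observe that the vertices of the polygon $F$ are precisely the vertices of $P$ lying on $F$: the sides of the polygon $F$ are the edges of $P$ contained in $F$, and two consecutive such edges meet at a point belonging to $F$ and to its two neighbouring faces, hence a vertex of $P$. So it is enough to compute, for each vertex $v$ on $F$, the interior angle of $F$ at $v$.

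Next I would pass to the link of $v$ in $P$, that is, the spherical polygon $\link(v)$ cut out on a small sphere centred at $v$ (equivalently, the trihedral cone that $P$ determines in the tangent space $T_v\HH^3\cong\mathbb{R}^3$, intersected with the unit sphere). Since the $1$--skeleton of $P$ is trivalent by Corollary~\ref{rightclassified}, $\link(v)$ is a spherical triangle. Under the standard dictionary its three vertices correspond to the three edges of $P$ through $v$, with the interior angle of the triangle at such a vertex equal to the interior dihedral angle of $P$ along that edge, while its three sides correspond to the three faces through $v$, with the length of the side coming from $F$ equal to the interior angle of the polygon $F$ at $v$. As $P$ is right-angled, all three of these dihedral angles equal $\frac{\pi}{2}$, so $\link(v)$ is a spherical triangle with three right angles; by the spherical law of cosines (or since such a triangle is an octant of the sphere cut out by three mutually orthogonal great circles) each of its sides has length $\frac{\pi}{2}$. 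Hence the interior angle of $F$ at $v$ is $\frac{\pi}{2}$, and since $v$ and $F$ were arbitrary, every face of $P$ is a right-angled hyperbolic polygon.

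I do not anticipate a serious obstacle; the one point that needs care is the dictionary invoked above, namely that the relevant angle of $\link(v)$ is the \emph{interior} dihedral angle and the relevant side length is the \emph{interior} polygon angle. This is most transparent in the tangent-space model: the supporting planes of the three faces at $v$ become linear $2$--planes, the hypothesis that the interior dihedral angles are $\frac{\pi}{2}$ says exactly that their outward unit normals form an orthonormal triple, the edge of $P$ shared by two of the faces emanates from $v$ along the inward normal of the third plane, and the interior angle of $F$ at $v$ is then the angle between two such inward edge-directions, which is again $\frac{\pi}{2}$. In fact this tangent-space computation already proves the theorem on its own, the link picture serving mainly to organize the bookkeeping.
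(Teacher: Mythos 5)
Your proof is correct and follows essentially the same route as the paper: pass to the link of a vertex, note it is a right-angled spherical triangle (hence the octant with all sides of length $\frac{\pi}{2}$), and read off the face angles as side lengths. The extra care you take with the interior-angle/side-length dictionary and the tangent-space picture only fleshes out details the paper leaves implicit.
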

\begin{proof} Let $F$ be a face of $P$ and $v$ a vertex of $F$.  Then $\link(v)$ is a right-angled spherical triangle.  Such a triangle is unique up to isometry on the round unit sphere and has edge lengths $\frac{\pi}{2}$.  These edge lengths are precisely the angles in the faces containing $v$ at $v$.  Therefore, in particular, the angle of $F$ at $v$ is $\frac{\pi}{2}$.  
\end{proof}

This fact, combined with the well known classification of right-angled hyperbolic polygons which states, in particular, that right-angled hyperbolic $k$--gons exist for $k \geq 5$, (see, for example, \cite{CM}) implies that each face of a right-angled hyperbolic polyhedron has at least 5 edges.  However, it should be noted that requiring all faces to have at least five edges is not a sufficient replacement for the prismatic circuit conditions in Andreev's characterization of right-angled polyhedra (Corollary \ref{rightclassified}).  Figure \ref{fig: pc} shows an example of a combinatorial polyhedron with trivalent 1--skeleton, all of whose faces have at least five edges, but which possesses a prismatic 4--circuit and therefore cannot be realized as a right-angled hyperbolic polyhedron.

It should also be remarked that the above result generalizes to higher dimensions.  That is, if $P$ is a right-angled polytope in $\HH^n$, then any lower dimensional face of $P$ is isometric to a right-angled hyperbolic polytope of the appropriate dimension.  However, it has been shown that compact right-angled hyperbolic polytopes in $\HH^n$ only exist for $n \leq 4$ \cite{AV} (although higher dimensional \emph{ideal} right-angled hyperbolic polytopes do exist).

Here are some combinatorial consequences of the above conditions on the structure of right-angled hyperbolic polyhedra which will be of use later:

\begin{lemma}\label{twelvepentagons} Every right-angled hyperbolic polyhedron $P$ has at least 12 pentagonal faces.  If a right-angled hyperbolic polyhedron contains only pentagonal faces, then it is a dodecahedron.  In particular, the dodecahedron is the right-angled hyperbolic polyhedron with the smallest number of faces.
\end{lemma}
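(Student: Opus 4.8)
The plan is to run the standard Euler-characteristic count, exploiting that the $1$--skeleton of $P$ is trivalent (Corollary \ref{rightclassified}) and that every face of $P$ has at least five edges (the preceding theorem together with the cited classification of right-angled hyperbolic polygons). Write $V$, $E$, $F$ for the numbers of vertices, edges, and faces of $P$, and let $f_k$ denote the number of $k$--gonal faces, so $F = \sum_{k\geq 5} f_k$. Trivalence gives $3V = 2E$; substituting $V = \tfrac{2}{3}E$ into Euler's formula $V - E + F = 2$ yields $E = 3F - 6$. Counting incidences between edges and faces gives $2E = \sum_{k\geq 5} k f_k$. Eliminating $E$ between these two identities produces the relation $\sum_{k\geq 5}(6-k)\,f_k = 12$.

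From here the first assertion is immediate: the only term on the left-hand side with a positive coefficient is the $k=5$ term, so $f_5 = 12 + \sum_{k\geq 6}(k-6)\,f_k \geq 12$, i.e. $P$ has at least twelve pentagonal faces, with equality exactly when $P$ has no face with six or more edges. Moreover $F = \sum_{k\geq 5} f_k \geq f_5 \geq 12$, so every right-angled hyperbolic polyhedron has at least twelve faces; since the dodecahedron is right-angled (by Corollary \ref{rightclassified}, its $1$--skeleton being trivalent with no prismatic $3$-- or $4$--circuits) and has exactly twelve faces, this yields the final sentence of the lemma once the middle sentence is proved.

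It remains to show that if $f_5 = F$, i.e. all faces are pentagons, then $P$ is combinatorially a dodecahedron; in this case the counts above force $F = 12$, $E = 30$, $V = 20$. I would finish by a direct reconstruction of the cell structure: choose a pentagonal face $F_0$; each of its five vertices is trivalent, hence meets exactly one edge not lying on $F_0$, and the five pentagons sharing an edge with $F_0$ are then forced to fit around it in the unique possible way (using trivalence at the shared vertices), yielding a ``second layer'' of five pentagons. Iterating, the face $F_0$, its first layer of five, a second layer of five, and a final antipodal pentagon exhaust $1+5+5+1 = 12$ faces with no freedom of choice at any stage, so the incidence structure is uniquely determined and coincides with that of the dodecahedron. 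Equivalently, one may pass to the dual, which is a triangulation of $S^2$ all of whose twelve vertices have degree $5$, and invoke the classical fact that the icosahedron is the unique such triangulation. The main obstacle is exactly this last step: the Euler count pins down $V$, $E$, $F$ but not, a priori, the combinatorial type, so the layer-by-layer forcing argument (or the dual uniqueness statement) is what actually closes the gap.
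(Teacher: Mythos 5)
Your argument is correct and is essentially the paper's proof: the same Euler-characteristic/trivalence count (the paper phrases it via the excess $c(P)=\sum_F (E(F)-5)=2e-5f$ and the identity $f-c(P)=12$, which is your relation $\sum_{k\geq 5}(6-k)f_k=12$ in different bookkeeping), followed by the observation that an all-pentagon polyhedron must be the dodecahedron. The only difference is that you spell out the last uniqueness step (layer-by-layer forcing, or dual icosahedron uniqueness), which the paper dismisses as ``easy to see''; your sketch is a valid way to close it.
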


\begin{proof} Let $F(P)$ denote the set of faces of $P$ and $E(F)$ denote the number of edges which the face $F$ contains.  Let $v$, $e$ and $f$ denote the number of vertices, edges and faces of $P$.  Let $$c(P) = \sum_{F \in F(P)} E(F) - 5.$$  Note that $c(P)$ is also equal to $2e - 5f$.

By Euler's formula, $v - e + f = 2$.  By trivalence of the 1--skeleton of $P$, $e = \frac{3v}{2}$.  Therefore $f - \frac{e}{3} =  2$, or $f - 2e + 5f = 12$, so $f - c(P) = 12$.

Let $k$ denote the number of pentagonal faces of $P$.  Then $c(P) \geq f-k$ as each face of $P$ which is not a pentagon contributes at least 1 to $c(P)$.  Therefore, $12 = f - c(P) \leq k$ which proves the first claim.

Suppose $P$ contains only pentagonal faces.  Then $c(P) = 0$, so $f = 12$.  It is easy to see that the only polyhedron which has 12 faces which are all pentagons is the dodecahedron.
\end{proof}

\begin{lemma}\label{verteximpliesadjacent} If $A$ and $B$ are a pair of distinct faces in a right-angled hyperbolic polyhedron $P$ which share a vertex, then they are adjacent in an edge which contains this vertex.
\end{lemma}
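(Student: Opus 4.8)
The plan is to leverage the trivalence of the $1$--skeleton provided by Corollary~\ref{rightclassified}, which makes the local combinatorics at a vertex completely rigid. Let $v$ be a vertex lying on both $A$ and $B$. Since $v$ is a vertex of $P$, exactly three edges $e_1,e_2,e_3$ of $P$ emanate from it, and exactly two faces meet along each of these edges. The key preliminary observation I would record is that any face of $P$ having $v$ among its vertices meets $v$ along exactly two of the edges $e_1,e_2,e_3$: the two sides of such a polygon incident to the vertex $v$ are two distinct edges of $P$ through $v$, hence two members of $\{e_1,e_2,e_3\}$, and $v$ appears only once on the boundary cycle of the polygon since that cycle is simple (by $3$--connectedness of the $1$--skeleton, via Steinitz's theorem). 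The same applies to both $A$ and $B$.

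With this in hand the argument becomes a pigeonhole count. The face $A$ is incident at $v$ to a $2$--element subset $S_A \subseteq \{e_1,e_2,e_3\}$, and $B$ to a $2$--element subset $S_B$; two $2$--element subsets of a $3$--element set always intersect, so there is an index $i$ with $e_i \in S_A \cap S_B$, that is, $e_i \subseteq A \cap B$.

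To finish, I would invoke the standard fact that the intersection of two distinct faces of a convex polyhedron is again a face of the polyhedron, hence here is at most one--dimensional; since it contains the edge $e_i$, it must equal $e_i$. Therefore $A$ and $B$ are adjacent along the edge $e_i$, which contains $v$. Equivalently, and perhaps more transparently, one may run the whole argument in the link of $v$: this link is a spherical triangle whose three vertices correspond to the three faces of $P$ at $v$ and whose three edges correspond to $e_1,e_2,e_3$, so the two distinct faces $A$ and $B$ are two vertices of this triangle and are therefore joined by exactly one of its edges.

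The only delicate point is the preliminary observation, specifically that $v$ is not a repeated vertex of the polygon $A$; without this a face might conceivably be incident to all three edges at $v$ and the count would be vacuous. This is exactly where $3$--connectedness (equivalently, the fact that faces of a polyhedron are bounded by simple cycles) enters, and everything else is immediate.
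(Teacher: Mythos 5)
Your proof is correct and rests on the same idea as the paper's: trivalence at the shared vertex forces the two faces to have a common edge there (the paper phrases it as a contrapositive --- non-adjacency at $v$ would force at least four edges at $v$ --- while you give the direct pigeonhole count with each face using two of the three edges). The extra details you supply (simplicity of a face's boundary cycle, $A \cap B$ being exactly the edge) are fine but not a different method.
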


\begin{proof} Suppose $A$ is not adjacent to $B$ in an edge containing the vertex they share.  Then there are at least 4 edges emanating from this vertex which contradicts the trivalence of $P$.  
\end{proof}

\begin{lemma}\label{threeadj} If $A$, $B$, and $C$ are pairwise adjacent faces of right-angled hyperbolic polyhedron $P$, then they all share a vertex.
\end{lemma}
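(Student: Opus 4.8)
The plan is to argue by contradiction and exhibit a prismatic $3$-circuit, which Corollary \ref{rightclassified} forbids in a right-angled hyperbolic polyhedron. Set $e_{AB} = A \cap B$, $e_{BC} = B \cap C$, and $e_{CA} = C \cap A$; since $A$, $B$, $C$ are pairwise distinct and pairwise adjacent, these are genuine, pairwise distinct edges of $P$ (an edge of $P$ lies on exactly two faces, so for instance $e_{AB} = e_{BC}$ would force $A = C$). Assume for contradiction that $A$, $B$, $C$ have no common vertex.

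First I would observe that, under this assumption, no two of the edges $e_{AB}, e_{BC}, e_{CA}$ share an endpoint. Indeed, if $v$ were a common endpoint of, say, $e_{AB}$ and $e_{BC}$, then $v \in e_{AB} \subseteq A \cap B$ and $v \in e_{BC} \subseteq B \cap C$, so $v$ would lie on all three faces and hence be a common vertex of $A$, $B$, $C$ --- contrary to hypothesis. Since the two endpoints of a single edge are always distinct, the six endpoints of $e_{AB}, e_{BC}, e_{CA}$ are therefore all distinct.

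Next I would build the circuit. Pick a point $p_{AB}$ in the relative interior of $e_{AB}$, and similarly $p_{BC}$ in the relative interior of $e_{BC}$ and $p_{CA}$ in the relative interior of $e_{CA}$. Each face of $P$ is a topological disk carrying the relevant two of these points on its boundary, so I can join $p_{AB}$ to $p_{BC}$ by an arc whose interior lies in the interior of $B$, join $p_{BC}$ to $p_{CA}$ by an arc through the interior of $C$, and join $p_{CA}$ to $p_{AB}$ by an arc through the interior of $A$. Because $A$, $B$, $C$ are distinct faces, their interiors are disjoint, so the union $c$ of the three arcs is a simple closed curve on $\partial P$ meeting the $1$-skeleton transversely in exactly the three edges $e_{AB}, e_{BC}, e_{CA}$; that is, $c$ is a $3$-circuit, and by the previous paragraph it is prismatic. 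This contradicts Corollary \ref{rightclassified}, so $A$, $B$, and $C$ must share a vertex.

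The argument is short and I do not anticipate a genuine obstacle; the only point demanding a little care is the construction of $c$, namely checking that the three connecting arcs can be chosen to be disjoint apart from their shared endpoints $p_{AB}, p_{BC}, p_{CA}$ and to cross no edges other than the intended three --- which follows immediately from the faces being topological disks with pairwise disjoint interiors. The conceptual content is entirely in the second paragraph: denying a common vertex is exactly what promotes the automatically present $3$-circuit running through $A$, $B$, $C$ into a prismatic one.
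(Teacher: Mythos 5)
Your proof is correct and follows essentially the same route as the paper: the absence of a common vertex forces the three mutual-intersection edges to have six distinct endpoints, yielding a prismatic $3$--circuit that contradicts Corollary \ref{rightclassified}. Your direct observation that a shared endpoint of two of these edges already lies on all three faces is in fact slightly cleaner than the paper's handling of that case, which detours through Lemma \ref{verteximpliesadjacent}.
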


\begin{proof} Let $a = A \cap B$, $b = B\cap C$, $c = C \cap A$.  Suppose no two of $a$, $b$ and $c$ share an endpoint.  Then these three edges form a prismatic 3--circuit which is a contradiction.  

Suppose then that two of $a,b,c$ share an endpoint $v$, for example $a$ and $b$.  Then $A$ and $C$ share the vertex $v$ which implies that they are adjacent in an edge which contains this vertex.  This edge must be $c$ since a pair of faces in a polyhedron can intersect in at most one edge.  Therefore, $A$, $B$ and $C$ share a vertex.
\end{proof}

\begin{lemma}\label{fouradj} Suppose $P$ is a right-angled hyperbolic polyhedron and $A$ and $C$ are nonadjacent faces both adjacent to a face $B$.  If $D \neq B$ is also adjacent to both $A$ and $C$, then $D$ is also adjacent to $B$.
\end{lemma}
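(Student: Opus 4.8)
The plan is to extract a prismatic $4$--circuit from the hypotheses and contradict Corollary \ref{rightclassified}.

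First I would name the four relevant edges: $e_1 = A \cap B$, $e_2 = B \cap C$, $e_3 = A \cap D$, and $e_4 = C \cap D$. Each is a genuine edge by the stated adjacencies. The next preliminary step is to check that $e_1, e_2, e_3, e_4$ are pairwise distinct: any coincidence would either force two of $A, B, C, D$ to coincide (impossible, since e.g. $A$ and $C$ are nonadjacent hence distinct, $D \ne B$, and a face is not adjacent to itself) or force three distinct faces to contain the same edge, which cannot happen because each edge of a polyhedron lies on exactly two faces. Having this, I would travel through the interiors of the faces in the cyclic order $B, A, D, C$, crossing $e_1$ from $B$ into $A$, then $e_3$ from $A$ into $D$, then $e_4$ from $D$ into $C$, then $e_2$ from $C$ back into $B$. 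Since these four faces are distinct with disjoint interiors, joining arcs in their interiors produces a simple closed curve meeting exactly the four distinct edges $e_1, e_2, e_3, e_4$ transversely, i.e.\ a $4$--circuit $c$.

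Now, because $P$ is right-angled, Corollary \ref{rightclassified} forbids prismatic $4$--circuits, so $c$ is not prismatic: some two of $e_1, e_2, e_3, e_4$ share an endpoint $v$. I would then enumerate the six pairs. If $v$ lies on $e_1$ and $e_2$, or on $e_3$ and $e_4$, or on $e_1$ and $e_4$, or on $e_2$ and $e_3$, then in each case $v$ belongs to both $A$ and $C$, so $A$ and $C$ share a vertex and hence are adjacent by Lemma \ref{verteximpliesadjacent} --- contradicting the hypothesis, so these four cases are vacuous. In the two remaining cases --- $v$ on $e_1$ and $e_3$, or $v$ on $e_2$ and $e_4$ --- the vertex $v$ belongs to both $B$ and $D$, so $B$ and $D$ share a vertex and therefore, again by Lemma \ref{verteximpliesadjacent} (using $D \ne B$), are adjacent, which is the desired conclusion.

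There is no genuine obstacle here beyond the bookkeeping: the fiddly points are verifying that the four edges are distinct so that $c$ is honestly a $4$--circuit, and confirming that the six-case enumeration is exhaustive and that every case either yields the conclusion or contradicts the nonadjacency of $A$ and $C$. All of the substance is carried by Corollary \ref{rightclassified} and Lemma \ref{verteximpliesadjacent}; no geometric input is needed.
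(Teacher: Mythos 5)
Your proof is correct and follows essentially the same route as the paper: both form the cycle of faces $A,B,C,D$, invoke the absence of prismatic $4$--circuits (Corollary \ref{rightclassified}), and use Lemma \ref{verteximpliesadjacent} to convert a shared endpoint into adjacency. The only difference is organizational --- you argue directly that the forced shared endpoint must lie on $B$ and $D$, whereas the paper assumes $D$ is not adjacent to $B$ and derives a prismatic $4$--circuit as a contradiction.
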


\begin{proof} Suppose for a contradiction that $D$ is not adjacent to $B$.  Consider the cycle of faces $A$, $B$, $C$, $D$.  Let $e_1 = A \cap B$, $e_2 = B \cap C$, $e_3 = C \cap D$, and $e_4 = D \cap A$.  

Suppose $e_1$ and $e_2$ share an endpoint.  Then $A$ and $C$ share a vertex and therefore, by the above proposition, are adjacent which is a contradiction.  A similar argument shows $e_i$ and $e_j$ share no endpoints for $i \neq j$.  

Therefore, these edges form a prismatic 4--circuit which is a contradiction.  \end{proof}

\section{Examples of Right-Angled Hyperbolic Polyhedra}

This section will describe an important family of right-angled hyperbolic polyhedra, and will describe operations for producing new examples from given ones.

\smallskip

\noindent {\bf L\"obell Polyhedra:} A {\it pentagonal flower}, denoted $L^n$ for $n\geq 5$, is a combinatorial $2$--complex consisting of an $n$--gon $F$ surrounded by $n$ pentagons $p_1, \dots, p_n$ with indices ordered cyclically such that $p_i$ is adjacent to $F$, $p_{i-1}$ and $p_{i+1}$.  In the case when $n = 5$, a pentagonal flower $L^5$ is called a {\it dodecahedral flower}.
 
Let $L_1^n$ and $L_2^n$ be a pair of pentagonal flowers.  Let $L(n)$ be the polyhedron whose boundary is obtained by gluing $L_1^n$ to $L_2^n$ along their $S^1$ boundary in the only way which produces a cellular decomposition of the sphere with trivalent $1$--skeleton.  This family of combinatorial polyhedra evidently has no prismatic $3$ or $4$--circuits and so each has a geometric realization as a right-angled hyperbolic polyhedron.  For a geometric construction of these polyhedra in $\HH^3$, see \cite{V}.  These polyhedra $L(n)$ are called {\it L\"obell polyhedra}.  In particular, $L(5)$ is isomorphic to a dodecahedron.  %See Figure \ref{fig: lobell} (this image was made possible by Roeder's set of Matlab programs for constructing hyperbolic polyhedra \cite{R}, and rendered by Geomview).

%\begin{figure}
	%\centering
	%\includegraphics[bb=14 14 375 375]{lzoo5.eps}
% lzoo5.eps: 300dpi, width=3.06cm, height=3.06cm, bb=14 14 375 375
	%\caption{Three right-angled polyhedra.  From the right and clockwise are $L(6)$, $L(7)$, and $L(8)$ as seen in the ball model for hyperbolic space.}
	%\label{fig: lobell}
%\end{figure}

The first example of a closed orientable hyperbolic $3$--manifold was constructed by F L\"obell by gluing the faces of eight copies of $L(6)$.  This construction is an example of a more general procedure which produces eightfold manifold covers of right-angled hyperbolic polyhedra which will be described in detail below.

L\"obell polyhedra are fairly well understood.  In particular, their symmetry allows for an explicit computation of their volumes.  This was carried out by A Vesnin \cite{V} whose result will be recorded in the following:

\begin{theorem} For $n \geq 5$,
$$\vol(L(n)) = \frac{n}{2} \left(2 \Lambda(\theta_n) + \Lambda\left(\theta_n + \frac{\pi}{n}\right) + \Lambda\left(\theta_n - \frac{\pi}{n}\right) - \Lambda\left(2\theta_n - \frac{\pi}{2}\right)\right)$$
where $$\theta_n = \frac{\pi}{2} - \arccos\left(\frac{1}{2\cos(\frac{\pi}{n})}\right)$$ 
and $\Lambda: \mathbb{R} \rightarrow \mathbb{R}$ is the Lobachevskii function 
$$\Lambda(z) = - \int_0^z \log|2 \sin(t)| dt.$$
\end{theorem}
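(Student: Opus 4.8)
The plan is to follow Vesnin \cite{V}: use the large isometry group of $L(n)$ to cut it into finitely many hyperbolic orthoschemes whose volumes are given by a classical formula in terms of $\Lambda$, and then add these up. By Corollary~\ref{rightclassified} the polyhedron $L(n)$ exists in $\HH^3$ and is unique up to isometry, so every combinatorial automorphism of $L(n)$ is realized by an isometry of $\HH^3$ preserving it. The combinatorial automorphism group contains the antiprismatic dihedral group of order $4n$: a rotation $\rho$ of order $n$ cyclically permuting the two rings of $n$ pentagons, together with $n$ reflections. Geometrically, $\rho$ has a geodesic axis $\ell$ meeting each $n$-gon face orthogonally at its center (which makes sense because each face of $L(n)$ is a right-angled hyperbolic polygon, by the theorem above), and each of the $n$ reflections is the reflection in a totally geodesic plane containing $\ell$; consecutive such planes make angle $\tfrac\pi n$ along $\ell$.

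\emph{The fundamental orthoscheme.} These $n$ planes cut $L(n)$ into $2n$ congruent cells. A careful check of incidences---using trivalence and the adjacency Lemmas~\ref{threeadj} and \ref{fouradj} to see exactly which faces of $L(n)$ meet a given cell and how the mirror planes slice through them---shows that (after, if necessary, an elementary further subdivision of each cell along a perpendicular) one obtains a decomposition of $L(n)$ into isometric copies of a single hyperbolic orthoscheme $T_n$, a tetrahedron three of whose six dihedral angles equal $\tfrac\pi2$ (two because any plane containing $\ell$ is orthogonal to the $n$-gon faces, a third because a symmetry plane bisects a $\tfrac\pi2$ dihedral angle of $L(n)$). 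One of the three remaining ("essential") dihedral angles of $T_n$ is forced to be $\tfrac\pi n$, the angle of one barycentric wedge of an $n$-gon face. The other two must be found by hyperbolic trigonometry inside the right-angled faces and vertex links of $L(n)$: applying the sine and cosine rules to the relevant right triangles (using in particular that a right-angled hyperbolic $n$-gon has half-edge-length $t$ with $\cosh t=\sqrt{2}\cos(\tfrac\pi n)$) shows that one of them is a fixed angle independent of $n$ (such as $\tfrac\pi4$, half of a $\tfrac\pi2$ dihedral of $L(n)$) and produces for the last one a relation of the form $\cos(\,\cdot\,)=\dfrac{1}{2\cos(\pi/n)}$, i.e.\ that angle equals $\theta_n=\tfrac\pi2-\arccos\!\big(\tfrac{1}{2\cos(\pi/n)}\big)$.

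\emph{Volume of $T_n$ and assembly.} Now invoke the classical formula of Lobachevskii (see Coxeter; also Vinberg's survey or Kellerhals) giving the volume of a compact hyperbolic orthoscheme with essential angles $\alpha,\beta,\gamma$ as $\tfrac14$ times an alternating sum of seven values of $\Lambda$, at the arguments $\alpha\pm\delta$, $(\tfrac\pi2-\beta)\pm\delta$, $\gamma\pm\delta$, and $\tfrac\pi2-\delta$, where $\delta\in(0,\tfrac\pi2)$ is the auxiliary angle determined by $\alpha,\beta,\gamma$. Substituting the angles of $T_n$ computed above and simplifying with the standard identities for $\Lambda$---oddness, $\pi$-periodicity, and the duplication relation $\Lambda(2x)=2\Lambda(x)-2\Lambda(\tfrac\pi2-x)$---collapses the seven terms to $2\Lambda(\theta_n)+\Lambda(\theta_n+\tfrac\pi n)+\Lambda(\theta_n-\tfrac\pi n)-\Lambda(2\theta_n-\tfrac\pi2)$. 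Summing over the $2n$ copies of $T_n$ in $L(n)$ then produces $\vol(L(n))=\tfrac n2\big(2\Lambda(\theta_n)+\Lambda(\theta_n+\tfrac\pi n)+\Lambda(\theta_n-\tfrac\pi n)-\Lambda(2\theta_n-\tfrac\pi2)\big)$, the stated formula.

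\emph{Main obstacle.} The real work is the middle step: verifying that the symmetry mirrors (perhaps with one further elementary cut) genuinely partition $L(n)$ into congruent orthoschemes, with no non-tetrahedral or non-convex pieces and no unforeseen overlaps, and---above all---carrying out the trigonometry that pins the last essential angle of $T_n$ to $\theta_n$, i.e.\ establishing the identity $\cos(\,\cdot\,)=\tfrac{1}{2\cos(\pi/n)}$. Once $T_n$ and its dihedral angles are in hand, everything downstream is bookkeeping with identities for $\Lambda$.
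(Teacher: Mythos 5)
The paper does not prove this theorem at all: it simply records Vesnin's result and cites \cite{V} for the computation, so there is no internal argument to measure your proposal against. Judged on its own terms, your proposal identifies the right kind of strategy (exploit the order-$4n$ antiprismatic symmetry, cut $L(n)$ into elementary pieces whose volumes are classical $\Lambda$-expressions, then sum and simplify), and this is indeed the spirit of the computation in the cited literature.

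However, as written it is a plan rather than a proof, and the gap sits exactly where the theorem's content is. First, the decomposition is not established: the piece of $L(n)$ cut out by two consecutive mirror planes is not a tetrahedron but a hexahedron (the two mirrors, sectors of the two $n$-gonal faces, and halves of two lateral pentagons, one from each ring), so your parenthetical ``after, if necessary, an elementary further subdivision'' is carrying all the weight --- you neither exhibit the cut, nor verify that the resulting pieces are orthoschemes, nor that they are mutually congruent; and if each of the $2n$ wedges is subdivided further, your final assembly ``summing over the $2n$ copies of $T_n$'' no longer matches your own count. (This is why standard treatments handle this hexahedral piece directly, e.g.\ via the Lambert-cube volume formula, rather than via a single orthoscheme.) Second, the identification of the essential angles --- the entire analytic content, in particular the relation that produces $\theta_n$ --- is explicitly deferred to ``hyperbolic trigonometry inside the right-angled faces and vertex links''; even at the schematic level there is a slip, since $\cos(\,\cdot\,)=\frac{1}{2\cos(\pi/n)}$ pins the angle to $\arccos\bigl(\frac{1}{2\cos(\pi/n)}\bigr)=\frac{\pi}{2}-\theta_n$, not to $\theta_n$, so which angle of which piece actually carries $\theta_n$ is left undetermined. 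Third, the collapse of the seven-term orthoscheme formula (with its auxiliary angle $\delta$, which you never compute and which must be matched to $\theta_n$) down to $2\Lambda(\theta_n)+\Lambda(\theta_n+\frac{\pi}{n})+\Lambda(\theta_n-\frac{\pi}{n})-\Lambda(2\theta_n-\frac{\pi}{2})$ is asserted as ``bookkeeping'' but never carried out. You candidly label the middle step ``the main obstacle''; that obstacle is precisely the theorem, so the proposal as it stands does not prove it --- it defers to the same computation the paper defers to by citing Vesnin.
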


\begin{theorem} \label{lobellincreasing} $\vol(L(n))$ is an increasing function of $n$.
\end{theorem}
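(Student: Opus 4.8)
The plan is to treat $n$ as a continuous real parameter and reduce the claim to a one-variable monotonicity statement. Write $u=\theta_n$ and $v=\pi/n$; the definition $\theta_n=\frac\pi2-\arccos\!\big(\frac{1}{2\cos(\pi/n)}\big)$ is equivalent to the clean identity $2\sin u\cos v=1$. For $v\in(0,\tfrac\pi4)$ let $u=u(v)$ be the smooth solution of this relation with $u\in(\tfrac\pi6,\tfrac\pi4)$, and set $g(v)=2\Lambda(u)+\Lambda(u+v)+\Lambda(u-v)-\Lambda\!\big(2u-\tfrac\pi2\big)$, so that by the volume formula of the preceding theorem $\vol(L(n))=\frac{\pi}{2v}\,g(v)$ at $v=\pi/n$. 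Since $n\mapsto\pi/n$ is decreasing, it suffices to show that $F(v):=\frac{\pi}{2v}\,g(v)$ is strictly decreasing on $(0,\tfrac\pi4)$.

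The next step is to differentiate. Implicit differentiation of $2\sin u\cos v=1$ gives $u'=\tan u\tan v$, and with $\Lambda'(z)=-\log|2\sin z|$ the chain rule yields $g'(v)=u'\,G_u+G_v$, where $G_u=2\Lambda'(u)+\Lambda'(u+v)+\Lambda'(u-v)-2\Lambda'\!\big(2u-\tfrac\pi2\big)$ and $G_v=\Lambda'(u+v)-\Lambda'(u-v)$. The crucial point — which I expect to be the main obstacle — is that $G_u$ vanishes identically along the curve $2\sin u\cos v=1$. Combining the four logarithms and tracking the signs of the arguments (so that the absolute values can be removed), $G_u=\log\dfrac{\cos^2 2u}{2\sin^2u\,(\cos 2v-\cos 2u)}$; then using $\cos 2v-\cos 2u=2(\sin^2u-\sin^2v)$ and $\sin u=\tfrac{1}{2\cos v}$, both the numerator and the denominator reduce to $\dfrac{\cos^2 2v}{4\cos^4 v}$, so $G_u=0$. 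Hence the term involving $u'$ disappears and $g'(v)=G_v=\log\dfrac{\sin(u-v)}{\sin(u+v)}$.

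It then remains to assemble signs. For $v\in(0,\tfrac\pi4)$ one checks $0<u-v<u+v<\tfrac\pi2$ (from $\sin u=\tfrac{1}{2\cos v}$ and $\sin 2v<1$), whence $\sin(u-v)<\sin(u+v)$ and $g'(v)<0$. Also $g(v)>0$, since each summand is positive: $\Lambda$ is odd and positive on $(0,\tfrac\pi2)$, while $u,\,u+v\in(0,\tfrac\pi2)$, $u-v\in(0,\tfrac\pi6)$, and $2u-\tfrac\pi2\in(-\tfrac\pi6,0)$. Therefore
$$F'(v)=\frac{\pi}{2v}\Big(g'(v)-\frac{g(v)}{v}\Big)<0,$$
so $F$ is strictly decreasing on $(0,\tfrac\pi4)$. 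Since $\pi/n\in(0,\tfrac\pi5]\subset(0,\tfrac\pi4)$ for every integer $n\ge5$ and $\pi/(n+1)<\pi/n$, this gives $\vol(L(n+1))=F(\pi/(n+1))>F(\pi/n)=\vol(L(n))$, as desired. The only genuinely delicate parts are establishing the identity $G_u\equiv0$ and checking the angle ranges that make the $\Lambda$ and $\Lambda'$ terms unambiguously signed; the rest is routine.
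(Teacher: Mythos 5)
Your proof is correct, and it takes a genuinely different route from the paper's. The paper never computes the derivative exactly: it differentiates $g(x)=2\Lambda(\theta_x)+\Lambda(\theta_x+\tfrac{\pi}{x})+\Lambda(\theta_x-\tfrac{\pi}{x})-\Lambda(2\theta_x-\tfrac{\pi}{2})$ in $x$ and bounds $g'(x)$ from below by a chain of inequalities using only coarse facts (the sign of $\theta_x'$, the angle ranges, and monotonicity of $\Lambda$ and $\Lambda'$ on the relevant intervals), then deduces that $v(x)=\tfrac{x}{2}g(x)$ increases (which also uses, implicitly, $g>0$). You instead set $v=\pi/n$, encode $\theta_n$ by the relation $2\sin u\cos v=1$, and exploit the exact cancellation $G_u\equiv 0$ along this curve; the identity checks out: with $\sin u=\tfrac{1}{2\cos v}$ one gets $\cos 2u=\tfrac{\cos 2v}{2\cos^2 v}$, so both $\cos^2 2u$ and $2\sin^2 u\,(\cos 2v-\cos 2u)$ equal $\tfrac{\cos^2 2v}{4\cos^4 v}$, and the sign conditions needed to drop the absolute values ($0<u-v$, $u+v<\tfrac{\pi}{2}$, $v<\tfrac{\pi}{4}$) all hold, with $u>v$ following exactly from $\sin 2v<1$ as you say. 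This Schl\"afli-type stationarity in the auxiliary angle collapses the derivative to the closed form $g'(v)=\log\tfrac{\sin(u-v)}{\sin(u+v)}<0$, which combined with the (explicitly verified) positivity of $g$ makes $F(v)=\tfrac{\pi}{2v}g(v)$ strictly decreasing and hence $\vol(L(n))$ strictly increasing. What each approach buys: the paper's argument requires no algebraic identities, only the elementary monotonicity lemma; yours yields an exact formula for the derivative (in the paper's variable it reads $g'(x)=\tfrac{\pi}{x^2}\log\tfrac{\sin(\theta_x+\pi/x)}{\sin(\theta_x-\pi/x)}>0$, recovering their inequality), treats the prefactor $\tfrac{x}{2}$ cleanly by making $g>0$ explicit rather than implicit, and delivers strict monotonicity with essentially no estimation.
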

\begin{proof} Let $v(x)$ denote the function:
$$v(x) = \frac{x}{2} \left(2 \Lambda \left(\theta_x\right) + \Lambda\left(\theta_x + \frac{\pi}{x}\right) + \Lambda\left(\theta_x - \frac{\pi}{x}\right) - \Lambda\left(2\theta_x - \frac{\pi}{2}\right)\right)$$
where
$$\theta_x = \frac{\pi}{2} - \arccos\left(\frac{1}{2\cos\left(\frac{\pi}{x}\right)}\right).$$
The result will be shown by proving that $v$ is an increasing function for $x \geq 5$.

Here are a smattering of estimates which will be of use and which are stated without their elementary proofs:

\begin{lemma} For $x \geq 5$,

\begin{enumerate}
\item $\frac{\pi}{6} < \theta_x < \frac{\pi}{4}$.

\item $-\frac{\pi}{6} <  2\theta_x - \frac{\pi}{2} < 0 $.

\item $\frac{\pi}{6} < \theta_x + \frac{\pi}{x} < \frac{\pi}{2}$.

\item $\theta'_x < 0$.

\item $\Lambda$ is increasing on $(-\frac{\pi}{6}, \frac{\pi}{6})$ and decreasing on $(\frac{\pi}{6}, \frac{5\pi}{6})$.

\item $\Lambda'$ is decreasing on $(0, \frac{\pi}{2})$.

\end{enumerate}
\end{lemma}
\qed

%\noindent{\bf Proof:} The first three statements follow from the first which is clear.

%Statement (4) follows from the compuation:
%$$\theta'_x = \left(\frac{1}{\sqrt{1 - \left(2\cos\left({\frac{\pi}{x}}\right)\right)^{-2}}}\right) \left(\frac{-1}{4\cos^2 \left({\frac{\pi}{x}}\right)}\right) \left(-2\sin\left({\frac{\pi}{x}}\right)\right) \left(\frac{-\pi}{x^2}\right)$$ which evidently is negative for $x \geq 5$.

%Statement (5) and (6) follow from the computations:
%$$\Lambda'(z) = -\log|2\sin(z)| $$
%and
%$$\Lambda''(z) = -\cot(z) .$$. 

%\qed
%\medskip

Let 
$$g(x) = 2\Lambda(\theta_x) + \Lambda\left(\theta_x + \frac{\pi}{x}\right) + \Lambda\left(\theta_x - \frac{\pi}{x}\right) - \Lambda\left(2\theta_x - \frac{\pi}{2}\right)$$
so that $v(x) = \frac{x}{2}(g(x))$.  The function $v$ will be shown to be increasing for $x \geq 5$ by showing that $g$ is increasing on this interval.

Now calculate $g'(x)$:
\begin{align*}
g'(x) = & \quad   2\Lambda'(\theta_x)(\theta'_x) + \Lambda'\left(\theta_x + \frac{\pi}{x} \right) \left(\theta'_x - \frac{\pi}{x^2} \right)& \nonumber \\
& \quad \quad + \Lambda'\left(\theta_x - \frac{\pi}{x} \right) \left(\theta'_x + \frac{\pi}{x^2} \right) - \Lambda'\left(2\theta_x - \frac{\pi}{2}\right) (2\theta'_x)& \nonumber\\
\geq &\quad  \Lambda'\left(\theta_x + \frac{\pi}{x} \right) \left(\theta'_x - \frac{\pi}{x^2} \right) + \Lambda'\left(\theta_x - \frac{\pi}{x} \right) \left(\theta'_x + \frac{\pi}{x^2} \right) - \Lambda'\left(2\theta_x - \frac{\pi}{2}\right) (2\theta'_x)& \\
> & \quad \Lambda'\left(\theta_x - \frac{\pi}{x} \right) \left(\theta'_x - \frac{\pi}{x^2} \right) + \Lambda'\left(\theta_x - \frac{\pi}{x} \right) \left(\theta'_x + \frac{\pi}{x^2} \right) - \Lambda'\left(2\theta_x - \frac{\pi}{2}\right) (2\theta'_x)& \\
= & \quad 2\Lambda'\left(\theta_x - \frac{\pi}{x}\right)\theta'_x - \Lambda'\left(2\theta_x - \frac{\pi}{2}\right) (2\theta'_x)& \\
= & \quad 2\theta'_x\left(\Lambda'\left(\theta_x - \frac{\pi}{x} \right) - \Lambda'\left(2\theta_x - \frac{\pi}{2}\right)\right)& \\
 > & \quad 2\theta'_x\left(\Lambda'\left(\theta_x - \frac{\pi}{x} \right)\right)& \\
 > & \quad 0. &
\end{align*}

The first inequality follows from statements (1), (4) and (5) of the Lemma which imply $\Lambda'(\theta_x)\theta'_x$ is positive.  The second inequality follows from statement (4) and (6) of the Lemma.  The third inequality follows from statement (4) which implies that $2\theta'_x$ is negative, and statements (2) and (5) which imply $\Lambda'\left(2\theta_x - \frac{\pi}{2}\right)$ is positive.  The final inequality follows from (4), (1) and (5).

This computation implies that $g$, and therefore $v$, is an increasing function for $x \geq 5$.  Therefore, $\vol(L(n)) = v(n)$ is an increasing function for $n \geq 5$.  This ends the proof of Theorem \ref{lobellincreasing}.
\end{proof}

For reference, the first few values of $\vol(L(n))$ as computed by {\it Mathematica} are recorded in Table 1.

\begin{table}
\label{tab:lobellvol}
\begin{center}

\begin{tabular}{|r|r||r|r|}
\hline
$n$&$\vol(L(n))$&$n$&$\vol(L(n))$ \\
\hline
5&4.306...&13&15.822...\\
6&6.023...&14&17.140...\\
7&7.563...&15&18.452...\\
8&9.019...&16&19.758...\\
9&10.426...&17&21.059...\\
10&11.801...&18&22.356...\\
11&13.156...&19&23.651...\\
12&14.494...&20&24.943...\\
\hline
\end{tabular}

\end{center}
\caption{The volumes of the first sixteen L\"obell polyhedra.}
\end{table}

\medskip

\noindent {\bf Doubling:} Given a right-angled polyhedron $P$ and a face $F$ of $P$, a new right angled polyhedron called the {\it double of $P$ across $F$} or simply a {\it double} $dP$ can be constructed as follows.  Let $r_F$ be the reflection of $\HH^3$ across the plane supporting the face $F$.  Then $dP$ is defined to be $P \cup r_F(P)$.  Note that the edges of $F$ disappear in $dP$, in the sense that the dihedral angle along these geodesic segments in $dP$ has measure $\pi$.  

\medskip

\noindent {\bf Composition:} Let $P_1$ and $P_2$ be a pair of combinatorial polyhedra.  Suppose that $F_1$ is a face of $P_1$ that is combinatorially isomorphic to a face $F_2$ of $P_2$, which just means they have the same number of edges.  A new combinatorial polyhedron $P$ can be defined by choosing an isomorphism between $F_1$ and $F_2$, identifying $P_1$ and $P_2$ along $F_i$ using this isomorphism, deleting the interiors of the edges corresponding to $F_i$, and demoting the endpoints of these edges to nonvertices.  This new polyhedron is called a {\it composition} of $P_1$ and $P_2$.  Note that this composition has a distinguished prismatic $k$--circuit made up of the edges whose interiors were deleted where $k$ is the number of edges of $F_i$.  

\begin{theorem} The composition $P$ of a pair of right-angled hyperbolic polyhedra $P_1$ and $P_2$ is also a right-angled hyperbolic polyhedron.
\end{theorem}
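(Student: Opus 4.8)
The strategy is to invoke Corollary~\ref{rightclassified}: it suffices to check that $P$ is a combinatorial polyhedron whose $1$--skeleton is trivalent and which contains no prismatic $3$-- or $4$--circuit. That $P$ is a combinatorial polyhedron (a $3$--ball carrying the stated cell structure, realizable as a convex polyhedron) is routine, being assembled from the balls $P_1,P_2$ by gluing along a disk. Trivalence is immediate too: the vertices of $P$ are exactly the vertices of $P_1$ off $F_1$ together with the vertices of $P_2$ off $F_2$, each with unchanged link, while the vertices of $F_1=F_2$ are demoted and no new vertex appears. So the content lies entirely in the prismatic circuit conditions. (Throughout, write $X\sim Y$ for adjacency of faces.)

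I fix notation as follows. Let $f_1,\dots,f_k$ be the edges of $F_1\cong F_2$ in cyclic order and let $A_j$, $C_j$ be the faces of $P_1$, $P_2$ meeting $F_1$, $F_2$ along $f_j$. Since $F_1$ is a face of a right-angled hyperbolic polyhedron it has $k\ge 5$ edges. In $P$ the faces $A_j$ and $C_j$ merge into one face $r_j$, and by Lemmas~\ref{verteximpliesadjacent} and~\ref{threeadj} one has $A_j\sim A_{j'}$ in $P_1$ precisely when $j'\equiv j\pm 1\pmod k$; hence the $r_j$ form a $k$--cycle, which is the distinguished prismatic $k$--circuit of the composition. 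Write $\mathcal{A}_i$ for the faces of $P$ coming from faces of $P_i$ disjoint from $F_i$; by Lemma~\ref{verteximpliesadjacent} these have no vertex on $F_i$, so neither they nor their edges and vertices are altered by the construction. The structural facts I will use are: (a) every face of $P$ lies in $\mathcal{A}_1\cup\mathcal{A}_2\cup\{r_1,\dots,r_k\}$; (b) no face of $\mathcal{A}_1$ is adjacent to a face of $\mathcal{A}_2$, since these lie on opposite sides of the former wall $\partial F_1$, which contains no edge of $P$; and (c) $r_j\sim r_{j'}$ only when $j'\equiv j\pm 1\pmod k$.

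Suppose for contradiction that $P$ has a prismatic $m$--circuit $c$, $m\in\{3,4\}$, running cyclically through faces $G_1,\dots,G_m$; for such small $m$ these faces are distinct (a repeated face forces a repeated crossed edge). I would argue by cases on the number of merged faces among the $G_i$. If every $G_i$ is a merged face, then consecutive $r$--indices differ by $\pm1\pmod k$ and the walk never backtracks, so it closes up only after a positive multiple of $k$ steps, forcing $m\equiv 0\pmod k$ --- impossible since $m\le4<5\le k$. If no $G_i$ is merged, fact~(b) forces all the $G_i$ into a single $\mathcal{A}_i$, and then $c$ is verbatim a prismatic $m$--circuit of $P_i$ (its crossed edges and their endpoints being untouched), contradicting Corollary~\ref{rightclassified} for $P_i$. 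Otherwise some but not all $G_i$ are merged. A short check shows the non-merged $G_i$ all lie in one $\mathcal{A}_i$ except possibly when $m=4$ and $c$ has cyclic form $G_1,r_a,G_3,r_b$ with $G_1\in\mathcal{A}_1$, $G_3\in\mathcal{A}_2$ and $a\neq b$; in that case $G_1\sim A_a$ and $G_1\sim A_b$ in $P_1$, so if $a\not\equiv b\pm1\pmod k$ then Lemma~\ref{fouradj} applied to $A_a,A_b,F_1,G_1$ forces $G_1\sim F_1$, contradicting $G_1\in\mathcal{A}_1$, while if $a\equiv b\pm1\pmod k$ then $G_1,A_a,A_b$ are pairwise adjacent, hence share a vertex by Lemma~\ref{threeadj}, which is then a common endpoint of two of the edges crossed by $c$ --- again impossible. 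So we may assume all non-merged $G_i$ lie in $\mathcal{A}_1$. Replacing each $r_j$ occurring in $c$ by its $P_1$--part $A_j$ then produces a cyclic sequence of faces of $P_1$; adjacencies survive by the discussion above, and the only crossed edges that change are the merged edges $r_j\cap r_{j+1}$, each becoming $A_j\cap A_{j+1}$ with its $P_2$--endpoint replaced by the endpoint of $A_j\cap A_{j+1}$ lying on $F_1$. Since these new endpoints lie on $F_1$ and are pairwise distinct while all the unchanged ones lie off $F_1$, the $2m$ crossed-edge endpoints remain distinct, so $P_1$ acquires a prismatic $m$--circuit, the desired contradiction.

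The delicate point is this last reduction: verifying that ``pushing'' a short prismatic circuit of $P$ off the distinguished $k$--circuit onto $P_1$ (or $P_2$) genuinely preserves prismaticity, i.e.\ creates no coincidence among the endpoints of the crossed edges. This is exactly where $k\ge 5$ and the adjacency lemmas of Section~3 are essential, and I would isolate it as a lemma: every prismatic circuit of $P$ of length less than $k$ can be isotoped off the distinguished $k$--circuit, hence lies entirely on the $P_1$ side or on the $P_2$ side.
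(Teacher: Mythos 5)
Your proof is correct: the trivalence check, the classification of the faces of $P$ into the two untouched families $\mathcal{A}_1,\mathcal{A}_2$ and the merged ring $r_1,\dots,r_k$, the disposal of the all-merged case (using $k\ge 5$) and the all-unmerged case, the elimination of the alternating configuration $G_1,r_a,G_3,r_b$ via Lemmas \ref{threeadj} and \ref{fouradj}, and the final distinctness check on the $2m$ endpoints all hold up, so every putative prismatic $3$-- or $4$--circuit of $P$ is converted into one in $P_1$ or $P_2$, contradicting Corollary \ref{rightclassified}. The overall strategy is the same as the paper's — transfer the short circuit to a factor — but the transfer mechanism is genuinely different. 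The paper works with the curve itself: either the circuit $d$ can be isotoped off the distinguished $k$--circuit, in which case it lies wholly in one factor, or it crosses it in two of the merged faces; the arc of $d$ on one side is then closed up by a segment running through the identified face $F_i$, yielding a prismatic $2$--, $3$-- or $4$--circuit of $P_i$ (a $2$--circuit being impossible outright since two faces of a polyhedron share at most one edge). That argument is shorter, but it lets the circuit drop in length and asserts, rather than verifies, that the closed-up curve is prismatic. Your version never reintroduces $F_i$: you reroute each merged face $r_j$ to its half $A_j$, keep the length $m$ fixed, and pay with a case analysis — in particular the alternating $m=4$ case, which the closing-through-$F_i$ move absorbs silently, is exactly where you need Lemmas \ref{verteximpliesadjacent}, \ref{threeadj} and \ref{fouradj}, adjacency lemmas the paper deploys only later (for Theorem \ref{p5cdec}). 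What your route buys is an explicit prismaticity check for the transferred circuit, i.e.\ precisely the step the paper glosses; what it costs is length and bookkeeping.
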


\begin{proof}  It must be shown that $P$ has trivalent 1--skeleton and contains no prismatic 3 or 4--circuits.  That $P$ has trivalent 1--skeleton is clear.  Let $F_1 \subset P_1$ and $F_2 \subset P_2$ be the faces being identified in the formation of $P$ and let $c$ denote the distinguished prismatic $k$--circuit of $P$.  Suppose $P$ contains a prismatic $3$ or $4$--circuit $d$.  If $d$ can be made to completely miss $c$ by an isotopy that does not change the set of edges that $d$ intersects, then evidently one of $P_1$ or $P_2$ contains a prismatic $3$ or $4$--circuit, a contradiction.  

So then $d$ must intersect $c$, and it must do so in two distinct faces as $d$ and $c$ are simple closed curves on $S^2$.  This curve $d$ in the composition $P$ determines an arc in either $P_1$ or $P_2$ whose endpoints lie on $c$.  Furthermore, this arc intersects at most 3 or 4 edges of $P_i$, two of which are edges of $F_i$.  Closing this arc by adding a line segment in the face $F_i \subset P_i$ joining the endpoints of this arc, produces a prismatic 2--, 3-- or 4--circuit in $P_i$.  This is a contradiction.  
\end{proof}

Define {\it decomposition} to be the operation inverse to composition.  That is, decomposition splits a polyhedron $P$ along some prismatic $k$--circuit into a pair of polyhedra $P_1$ and $P_2$ each of which admit a right-angled hyperbolic structure.  A right-angled hyperbolic polyhedron which admits a decomposition is {\it decomposable}.  

A necessary condition for a polyhedron $P$ to be decomposable is that it have a prismatic $k$--circuit $c$ with $k \geq 5$ such that if $F$ is a face of $P$ which $c$ intersects, then, in $F$, the curve $c$ bounds combinatorial polygons on either side which have at least 5 edges.  In this case, $c$ will be said to have no {\it flats}.  However, this condition is not sufficient for decomposability as there is no guarantee that the resulting combinatorial polyhedra $P_1$ and $P_2$ obtained by splitting $P$ along $c$ admit a right-angled hyperbolic structure.  In particular, the polyhedra $P_1$ and $P_2$ may contain prismatic 3 or 4--circuits.  However, if $k = 5$, then the necessary condition is sufficient:

\begin{theorem}\label{p5cdec} Suppose $c$ is a prismatic 5--circuit of a right-angled hyperbolic polyhedron $P$ with no flats.  Then the polyhedron $P$ is decomposable along $c$.  
\end{theorem}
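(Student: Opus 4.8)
The plan is to verify that both combinatorial polyhedra produced by the decomposition satisfy the hypotheses of Corollary~\ref{rightclassified}. Cutting $\partial P=S^2$ along $c$ produces two disks $D_1,D_2$; the five points $v_j=c\cap e_j$ divide $c$ into five arcs, the $j$th of which lies in the face $Q_j$ that $c$ runs through between its crossings of $e_j$ and $e_{j+1}$ (indices mod $5$, so $Q_j\cap Q_{j+1}=e_{j+1}$, and the two faces of $e_j$ are $Q_{j-1}$ and $Q_j$; one checks that the $Q_j$ are distinct). Capping $D_i$ with a pentagon $C_i$ whose five edges are these arcs gives $P_i$. Write $e_j^{D_i}$ for the half of $e_j$ in $D_i$ and $Q_j^{D_i}$ for the part of $Q_j$ on the $D_i$-side, a face of $P_i$. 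Trivalence of $P_i$ is immediate: the only new vertices are the $v_j$, and each lies on exactly three edges (the two neighbouring edges of $C_i$ and $e_j^{D_i}$). The no-flats hypothesis is precisely what guarantees that every $Q_j^{D_i}$ is an honest (at least pentagonal) polygon, so that $P_1,P_2$ are combinatorial polyhedra. By the symmetry $D_1\leftrightarrow D_2$ it now suffices to show that $P_1$ contains no prismatic $3$- or $4$-circuit.

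Suppose $d$ is a prismatic $k$-circuit in $P_1$ with $k\in\{3,4\}$. As $C_1$ is a disk, $d\cap C_1$ is a union of arcs, so $d$ crosses the edges of $C_1$ an even number of times; since $d$ crosses at most four edges altogether, it crosses $0$, $2$, or $4$ of them. If $d$ crosses no edge of $C_1$ then $d\subset D_1\subset\partial P$, and replacing each crossed $e_j^{D_1}$ by the full edge $e_j$ converts $d$ into a circuit $\bar d$ in $P$ crossing the same number of distinct edges; prismaticity of $c$ makes $\bar d$ prismatic, contradicting Corollary~\ref{rightclassified} for $P$. If $d$ crosses four edges of $C_1$ (so $k=4$ and $d$ crosses nothing else), then among four edges of the pentagon $C_1$ two are adjacent and share a vertex $v_j$, contradicting prismaticity of $d$. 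Hence $d$ crosses exactly two edges of $C_1$; being prismatic these are non-adjacent in $C_1$, so after cyclically relabelling we may take them to be the arcs $g\subset Q_1$ and $h\subset Q_3$. Then $\beta:=d\setminus C_1$ is a single arc, avoiding $C_1$, from a point of $g$ to a point of $h$; it starts in $Q_1^{D_1}$, ends in $Q_3^{D_1}$, and crosses exactly $k-2\in\{1,2\}$ further edges. Thus there is a path of length $k-2$ from $Q_1^{D_1}$ to $Q_3^{D_1}$ in the face-adjacency graph of $P_1$ that does not pass through the vertex $C_1$.

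I claim no such short path exists. First, $Q_1$ and $Q_3$ are \emph{non-adjacent} in $P$: if they shared an edge $f$ then $Q_1,Q_2,Q_3$ would be pairwise adjacent (via $e_2$, $e_3$, $f$), hence share a vertex by Lemma~\ref{threeadj}, forcing $e_2$ and $e_3$ to meet there and contradicting prismaticity of $c$. A length-$1$ path $Q_1^{D_1}\sim Q_3^{D_1}$ is impossible: the common edge cannot be some $e_i^{D_1}$, whose two faces in $P_1$ are the consecutive faces $Q_{i-1}^{D_1}$ and $Q_i^{D_1}$, nor a genuine edge of $P$, which would make $Q_1$ and $Q_3$ adjacent. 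A length-$2$ path runs $Q_1^{D_1}\sim M\sim Q_3^{D_1}$ with $M\neq C_1$; a short check with prismaticity of $d$ and the combinatorics of the $Q_j^{D_1}$ rules out $M=Q_j^{D_1}$ for every $j$ (for $j\in\{2,5\}$ the first connecting edge would share a $v_j$ with $g$, for $j=4$ the second would share a $v_j$ with $h$, and $j\in\{1,3\}$ is degenerate), so $M$ must be a genuine face of $P$, whence the two connecting edges are $f_1=Q_1\cap M$ and $f_2=Q_3\cap M$ in $P$. Now $Q_1$ and $Q_3$ are non-adjacent faces both adjacent to $M$, while $Q_2$ is adjacent to both; Lemma~\ref{fouradj} gives $Q_2$ adjacent to $M$, say along $\ell$. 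Applying Lemma~\ref{threeadj} to the pairwise-adjacent triples $Q_1,Q_2,M$ and $Q_2,Q_3,M$ and using trivalence, the edges $e_2$, $\ell$, $e_3$ occur consecutively around $\partial Q_2$ (with $\ell$ between them, since $e_2$ and $e_3$ are disjoint by prismaticity of $c$). But then the side of $Q_2$ that $c$ cuts off toward $D_1$ is bounded only by $\ell$, a half of $e_2$, a half of $e_3$, and the arc of $c$ in $Q_2$ — a quadrilateral — contradicting the no-flats hypothesis at $Q_2$. Since no path of length $\leq 2$ avoiding $C_1$ exists, $k-2\geq 3$, contradicting $k\in\{3,4\}$. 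Hence $P_1$, and by symmetry $P_2$, has no prismatic $3$- or $4$-circuit, and the decomposition succeeds.

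The crux is the length-$2$ case: one must rule out $M$ being the cap face or a truncated face $Q_j^{D_1}$ (the latter requiring care with which half-edges are common edges of which truncated faces), and then convert the web of adjacencies among $Q_1,Q_2,Q_3,M$ into a statement about the cyclic order of the edges of $Q_2$ in order to expose the forbidden four-sided piece. The remaining steps are routine bookkeeping with Lemmas~\ref{threeadj} and~\ref{fouradj}, trivalence, and the prismaticity of $c$.
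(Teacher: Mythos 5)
Your proposal is correct and follows essentially the same route as the paper: reduce to a putative prismatic $3$-- or $4$--circuit meeting the new pentagonal face in two non-adjacent edges, then use Lemmas \ref{threeadj} and \ref{fouradj} together with the prismaticity of $c$ to force a triangular or quadrilateral piece of the intermediate face ($Q_2$ in your notation), contradicting the no-flats hypothesis. The only differences are presentational --- the paper isotopes the circuit across the middle edge of the pentagon and argues with the pushed circuit in $P$, whereas you analyze the dual-graph path directly and explicitly exclude the truncated faces as the intermediate face, which if anything makes the bookkeeping slightly more careful.
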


\begin{proof}  Denote the combinatorial polyhedra obtained by splitting $P$ along $c$ by $P_1$ and $P_2$.  It must be shown that these polyhedra admit right-angled hyperbolic structures.  It suffices to show this for just one of the two.  

It is obvious that $P_1$ has trivalent 1--skeleton.   

For the purposes of establishing a contradiction, suppose $d$ is a prismatic 3 or 4--circuit of $P_1$.  Let $F_1$ be the pentagonal face of $P_1$ produced by splitting $P$ along $c$.  If $d$ does not intersect $F_1$, then the curve $d$ persists in the composition $P$ which is a contradiction as it implies that $P$ has a prismatic 3 or 4--circuit.

So suppose $d$ intersects the pentagon $F_1$ in a pair of edges $d_1$ and $d_2$.  These edges must be disjoint in $F_1$, and since $F_1$ is a pentagon, there must be an edge $e$ of $F_1$ adjacent to both $d_1$ and $d_2$.  For $i=1,2$, let $e_i$ denote the edges of $P_1$ which share an endpoint with both $e$ and $d_i$.  These edges $e_i$ both belong to some face of $P_1$ called $E$.  

Via an isotopy, the circuit $d$ can be pushed across the edge $e$ to form a new circuit $\hat{d}$ so that instead of intersecting $F_1$, it intersects $E$, now in the edges $e_i$.  Let $D_i$ denote the face of $P_1$ which is adjacent to $E$ in the edge $e_i$.  Note that the faces $D_i$ and $E$ and the curve $\hat{d}$ are disjoint from the face $F_1$, and so persist in $P$.  By abusing of notation, label all edges and faces and curves in $P$ by their given labels in $P_1$.  

Suppose $d$ is a prismatic 3--circuit in $P_1$.  Then $\hat{d}$ is a 3--circuit in $P$ which intersects the three faces $D_1$, $E$ and $D_2$.  Therefore, these faces are pairwise adjacent.  Thus by Lemma \ref{threeadj}, these faces all share a vertex in $P$.  This implies $e_1$ and $e_2$ share a vertex, which implies that the edges $e$, $e_1$, and $e_2$ of $P_1$ form a triangle.  This contradicts the hypothesis on $c$.  

Suppose $d$ is a prismatic 4--circuit in $P_1$.  Then $\hat{d}$ is a 4--circuit in $P$ which intersects the three faces $D_1$, $E$ and $D_2$ as well as some fourth face $G$ which is adjacent to each $D_i$.  Therefore, by Lemma \ref{fouradj}, $G$ must be adjacent to $E$.  This implies that the edges $e$, $e_1$, $e_2$ and $G \cap E$ of $P_1$ form a quadrilateral.  This contradicts the hypothesis on $c$.

Therefore, $P_1$ contains no prismatic 3 or 4--circuits and has trivalent 1--skeleton which implies, by Andreev's theorem, that it admits a right-angled hyperbolic structure.
\end{proof}

Because there is a nontrivial moduli space of right-angled polygons \cite{CM}, this combinatorial composition of polyhedra is not as simple to understand in the geometric setting as the doubling process described above.  The difficulty is that $F_1$ and $F_2$ may be nonisometric in the geometric realizations of $P_1$ and $P_2$ and therefore the composition's geometric realization is not simply $P_1$ glued to $P_2$.  However, doubles are a special case of this welding operation where the combinatorics and the geometry agree.

\section{Manifold covers}

In this section, manifold covers of right-angled hyperbolic polyhedra, viewed as hyperbolic Coxeter orbifolds, are constructed.

If $P$ is a right-angled hyperbolic polyhedron, let the {\it reflection group of P}, denoted $\Gamma_P$, be the group generated by reflections in the planes supporting the faces of $P$.  Then evidently $\Gamma_P$ is a discrete group of isometries, and $\HH^3 / \Gamma_P = P$, giving $P$ the structure of a Coxeter orbifold.

A group presentation for $\Gamma_P$ is a simple matter to write down.  Let $F_1, \dots, F_k$ be the faces of $P$ listed in no particular order, and let $r_i$ denote the reflection in the plane supporting the face $F_i$.  Then 
$$\Gamma_P = < r_1, \dots, r_k \mid r_i^2 = 1, (r_i r_j)^2 = 1 \, \text{if $F_i$ is adjacent to $F_j$}>.$$
This presentation will be called the {\it standard presentation} of $\Gamma_P$.

%Hyperbolic orbifolds are natural generalizations of hyperbolic manifolds. The finite groups associated to points in manifolds are all trivial.  In particular, when a discrete group of isometries $\Gamma$ is also torsion free, the orbifold $\HH^3 / \Gamma$ is also naturally a manifold.  A {\it cover} of a hyperbolic orbifold $Q = \HH^3 / \Gamma$ is an orbifold $\overline{Q} = \HH^3 / \overline{\Gamma}$ where $\overline{\Gamma}$ is conjugate to a subgroup of $\Gamma$.  If this subgroup has finite index $n$, then the orbifold $\overline{Q}$ is an {\it n-fold finite cover}.  If this subgroup is torsion free, then $\overline{Q}$ is a manifold cover of $Q$.  By Selberg's Lemma, as each cocompact discrete group of isometries of $\HH^3$ is finitely presented and is a matrix group, every compact hyperbolic orbifold has a finite index manifold cover.

The following theorem was originally proved by Mednykh and Vesnin \cite{MV}.  The proof contained herein is only slightly modified from their original argument.

\begin{theorem}\label{8foldcover} Every right-angled hyperbolic polyhedron $P = \HH^3 / \Gamma_P$ has an eightfold manifold cover.
\end{theorem}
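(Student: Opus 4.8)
The plan is to construct a surjective homomorphism $\phi\colon\Gamma_P\to(\mathbb{Z}/2)^3$ with torsion-free kernel. Given such a $\phi$, the subgroup $\ker\phi$ is a torsion-free discrete subgroup of $\mathrm{Isom}(\HH^3)$, hence acts freely and properly on $\HH^3$, so $\HH^3/\ker\phi$ is a hyperbolic $3$-manifold; and since $\ker\phi$ is normal of index $8$ in $\Gamma_P$, the induced map $\HH^3/\ker\phi\to\HH^3/\Gamma_P=P$ is an eightfold orbifold cover with deck group $\Gamma_P/\ker\phi\cong(\mathbb{Z}/2)^3$.

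The first step is to locate the torsion in $\Gamma_P$. A finite-order element $\gamma\neq1$ fixes a point of $\HH^3$, and since the $\Gamma_P$-translates of $P$ tile $\HH^3$ we may conjugate $\gamma$ so that its fixed point lies in $P$. That point cannot be interior to $P$ (the interior of $P$ injects into $\HH^3/\Gamma_P$), so it lies in the relative interior of a face, of an edge, or is a vertex, with stabilizer $\langle r_i\rangle$, $\langle r_i,r_j\rangle$, or $\langle r_i,r_j,r_k\rangle$ respectively, where the $F$'s are the faces through it (at most three, by trivalence). Since every dihedral angle is $\tfrac{\pi}{2}$, these groups are $\mathbb{Z}/2$, $(\mathbb{Z}/2)^2$, and $(\mathbb{Z}/2)^3$; by Lemma \ref{threeadj} three pairwise-adjacent faces meet at a vertex, and each of the first two stabilizers is contained in such a vertex group. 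Hence every torsion element of $\Gamma_P$ is conjugate into some vertex stabilizer $\langle r_i,r_j,r_k\rangle\cong(\mathbb{Z}/2)^3$, and because the target of $\phi$ is abelian, $\phi(g\gamma g^{-1})=\phi(\gamma)$; therefore $\ker\phi$ is torsion-free once $\phi$ is injective on every vertex stabilizer.

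The second step builds $\phi$ from a face colouring. From the standard presentation, sending each $r_i$ to any vector assigned to the face $F_i$ automatically respects $r_i^2=1$ and $(r_ir_j)^2=1$, hence defines a homomorphism to $(\mathbb{Z}/2)^3$. By the Four Colour Theorem the faces of $P$ admit a proper $4$-colouring; send the colours to
$$v_1=(1,0,0),\qquad v_2=(0,1,0),\qquad v_3=(0,0,1),\qquad v_4=(1,1,1).$$
One checks directly over $\mathbb{F}_2$ that any three of $v_1,\dots,v_4$ are linearly independent. The three faces at a vertex are pairwise adjacent, so they receive three distinct colours, hence three independent vectors, so $\phi$ maps each vertex stabilizer isomorphically onto $(\mathbb{Z}/2)^3$; by the previous step $\ker\phi$ is torsion-free, and since at least three colours occur and any three of the $v_i$ span $(\mathbb{Z}/2)^3$, the map $\phi$ is surjective. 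This completes the proof.

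The only deep ingredient is the Four Colour Theorem, used to $4$-colour the faces of $P$; everything else is bookkeeping with the Coxeter presentation and the linear algebra of the four vectors $v_i$. (Since the $1$-skeleton of $P$ is a $3$-connected cubic planar graph, one could instead trade the face colouring for a proper $3$-edge-colouring via Tait's theorem, but that is a combinatorial input of the same depth.)
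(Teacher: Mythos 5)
Your proof is correct, and while the subgroup you build is essentially the paper's $H_{(P,C)}$, your route to torsion-freeness is genuinely different. In fact your single homomorphism $\phi\colon\Gamma_P\to(\mathbb{Z}/2)^3$ assembles the paper's two maps into one: composing $\phi$ with the coordinate-sum functional recovers the mod-2 length map $g$, and the pairwise sums $v_i+v_j$ of your four vectors realize the induced edge $3$--coloring used to define $h$ on the index-2 subgroup $G_P$; so given the same face coloring, $\ker\phi$ and the paper's $H_{(P,C)}$ coincide. Where you diverge is the crucial step. The paper proves torsion-freeness by an induction on word length in the rotation generators $a_{ij}$, showing every nontrivial element of the kernel is loxodromic; this is longer but self-contained and yields the extra geometric information about the isometry types of kernel elements. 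You instead invoke the standard structure theory of geometric reflection groups: every torsion element is elliptic, hence conjugate into the stabilizer of a point of the fundamental domain $P$, and these stabilizers are exactly the groups generated by the reflections in the faces through that point, so it suffices that $\phi$ be injective on each vertex group $(\mathbb{Z}/2)^3$ -- which your choice of $v_1,\dots,v_4$ (any three linearly independent over $\mathbb{F}_2$) guarantees, since the three faces at a vertex are pairwise adjacent and thus receive distinct colors. This is shorter and conceptually cleaner, at the cost of citing the stabilizer description (equivalently, the Coxeter orbifold structure $\HH^3/\Gamma_P=P$, which the paper itself asserts without proof) and the elliptic fixed-point fact; both proofs rest equally on the standard presentation of $\Gamma_P$ and on the Four Color Theorem. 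One cosmetic remark: your appeal to Lemma \ref{threeadj} is unnecessary -- containment of face and edge stabilizers in vertex stabilizers follows simply from compactness (every face and every edge of $P$ contains a vertex), and in any case injectivity of $\phi$ on vertex groups already implies injectivity on their subgroups.
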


\begin{proof}  Let $P$ be a right-angled hyperbolic polyhedron and $\Gamma_P$ be its reflection group.  Let $g: \Gamma_P \rightarrow \mathbb{Z}/2\mathbb{Z}$ be the group homomorphism which gives the mod 2 length of a word in $\Gamma_P$.  Here the group presentation for $\Gamma_P$ given above is used.  Let $G_P$ be the kernel of this homomorphism.

This group $G_P$ determines a double cover of $P$ which is easy to visualize.  Take two copies of $P$ and identify each face of one copy to the corresponding face of the other copy.  The resulting geometric object is a hyperbolic orbifold which is topologically $S^3$ with a one dimensional singular set which is isomorphic as a graph to the 1--skeleton of $P$.  As $P$ has right angles, the cone angle around each edge of this singular set is $\pi$.  

A Wirtinger type presentation then gives a presentation for $G_P$.  A loop around an edge in the singular set corresponding to an edge in $P$ which is contained by the faces $F_i$ and $F_j$ gives the generator of $G_P$ given by $a_{ij} = r_i r_j$.  By the relations of $\Gamma_P$, $a_{ij} = a_{ji}$.  Note that $a_{ij}$ is a composition of reflections in orthogonal planes and so is a rotation of $\pi$ about the geodesic which supports the edge $F_i \cap F_j$.  In particular, there are relations $a_{ij}^2 = 1$.  Further relations are given by vertices so that if $F_i$, $F_j$ and $F_k$ are distinct faces sharing a vertex, then $a_{ij} a_{jk} = a_{ik}$.

To find a torsion free subgroup of $G_P$ that has index four, some facts about colorings of the faces and edges of $P$ will need to be collected.  Suppose the faces of $P$ are colored by the four elements of the group $(\mathbb{Z}/ 2 \mathbb{Z})^2$ with the usual condition that if two faces are adjacent, they are colored differently.  Such a face coloring is guaranteed by the four color theorem proved by Appel and Haken \cite{AH}.  Then each edge of $P$ can be colored by the sum of the colors of the faces which contain the edge.  Note that each edge is then colored by one of the three nonzero elements of $(\mathbb{Z} / 2 \mathbb{Z})^2$.  Note also that the sum of two distinct nonzero elements of $(\mathbb{Z} / 2 \mathbb{Z})^2$ is the third nonzero element.

\begin{lemma} This is an edge 3--coloring of the 1--skeleton of $P$.  That is, no two edges which are colored the same share an endpoint.
\end{lemma}
\begin{proof} Suppose that two edges $e_1$ and $e_2$ share a vertex $v$.   By trivalence, there is a unique edge $d$ which also has vertex $v$ which is not $e_1$ or $e_2$.  Let $A$ be the face containing $e_1$ and $e_2$, $B$ the face containing $e_2$ and $d$, and $C$ the face containing $d$ and $e_1$.  Let $L_F$ denote the color of the face $F$ given by the face coloring.  Then the color of the edge $e_1$ is $L_A + L_C$ while the color of $e_2$ is $L_A + L_B$.  Since $L_B \neq L_C$, these colors are different.  
\end{proof}

Let $C$ denote this coloring of $P$.  Define a homomorphism $h: G_P \rightarrow (\mathbb{Z} / 2 \mathbb{Z})^2$ which sends the generator $a_{ij}$ to the coloring of the edge shared by the faces $F_i$ and $F_j$.  That this assignment of images for generators of $G_P$ extends to a group homomorphism follows from the comments above.  Let $H_{(P,C)}$ denote the kernel of $h$.

\begin{lemma}\label{Htorsionfree} $H_{(P,C)}$ is torsion free.
\end{lemma}
\begin{proof} The proof of this result is an induction argument on the length of a freely reduced word in the generators $a_{ij}$.  If $w$ is such a word, let $l(w)$ denote its length.  Let $e_{ij}$ denote the edge of $P$ corresponding to the generator $a_{ij}$.  The proof will show that if $w$ is in $H_{(P,C)}$ and is not the identity element, then $w$ is a hyperbolic screw translation, or a {\it loxodromic} isometry in the vernacular.  These isometries have infinite order, and also have the property that the composition of two such is again loxodromic.  

First note that if $l(w) = 1$, then $w$ cannot be in $H_{(P,C)}$.  Suppose then that $l(w) = 2$, say $w = a_{ij} a_{mn}$.  If $w$ is in $H_{(P,C)}$, then $h(a_{ij}) = h(a_{mn})$.  Therefore, as $w$ is freely reduced, $e_{ij}$ and $e_{mn}$ do not share any vertices.  %[??? some result about how the geodesics don't intersect is needed???].  
Let $g$ denote the geodesic which intersects $e_{ij}$ and $e_{mn}$ orthogonally.  Then the isometry $w = a_{ij} a_{mn}$ is loxodromic and its translation axis is $g$.  Therefore, in particular, $w$ cannot have finite order.

Suppose $l(w) = 3$ with $h(w) = 0$, say $w = abc$.  Note that $h(a)$, $h(b)$ and $h(c)$ must all be different nonzero elements of $(\mathbb{Z} / 2 \mathbb{Z})^2$.  Furthermore, note that if the edges associated to the generators $a$ and $b$ share a vertex, then $w$ can be reduced in the group to an equivalent word of length 2, and thus is loxodromic.  So assume these edges are disjoint.  Then, $ab$ is a loxodromic isometry and so $w$ is the composition of a loxodromic isometry with the rotation $c$ and is therefore loxodromic. 

Now assume, for the purposes of induction, that if $l(w) \leq d$ for $d \geq 3$ with $w \in H_P$, then $w$ is loxodromic and so does not have finite order.  Let $w$ be a freely reduced word of length $d+1$ with $h(w) = 0$.  Let $x$ be the prefix of $w$ of length $\lfloor \frac{l(w)}{2} \rfloor$ where $\lfloor \cdotp \rfloor$ denotes the floor function.  Furthermore, let $y$ be the freely reduced word representing $x^{-1} w$ so that $w = xy$.  Note that since $l(w) \geq 4$, the lengths of both $x$ and $y$ are at least 2.

Suppose first that $h(x) = 0$.  Then evidently, $h(y) = 0$ and so $x$ and $y$ are words whose lengths are shorter than that of $w$ and live in $H_{(P,C)}$.  Therefore, by the induction hypothesis, $x$ and $y$ are loxodromic, and so $w$, their composition, is also loxodromic.  

Suppose then that $h(x) \neq 0$.  Let $a_{ij}$ be the first letter of the word $y$ and $z$ the freely reduced word representing $a_{ij}^{-1} y$.  If $h(x) = h(a_{ij})$, then $h(xa_{ij}) = 0$ and so $h(z) = 0$.  Therefore, induction says that $x a_{ij}$ and $z$ are loxodromic as they are strictly shorter than $w$, and so $w = x a_{ij} z = x y$, their composition, is also loxodromic.

Finally, suppose $h(x) \neq 0$ but $h(x) \neq h(a_{ij})$.  Let $e_{ik}$ and $e_{kj}$ be edges of $P$ which both share the same endpoint with $e_{ij}$ and let $a_{ik}$ and $a_{kj}$ denote the corresponding generators of $G_P$.  Note that one of $a_{ik}$ or $a_{kj}$ must be colored by the color $h(x)$.  Suppose without loss of generality that $h(a_{ik}) = h(x)$.   Then, in the word $w$, replace the first letter $a_{ij}$ in $y$ by $a_{ik} a_{kj}$.  Note that since $d+1 = l(w) \geq 4$, the words $x a_{ik}$ and $a_{kj} z$ both have length smaller than $d+1$ and at least 2. Then, by construction, $h(x a_{ik}) = 0$ and so $h(a_{kj} z) = 0$ and therefore both these words are loxodromic by the induction hypothesis.  Thus their composition $x a_{ik} a_{kj} z = x a_{ij} z = x y = w$ is loxodromic.  Note that the words $x a_{ik}$ and $a_{kj} z$ may not be freely reduced.  However, since free reduction only reduces length, the induction hypothesis still applies. 

This ends the proof of Lemma \ref{Htorsionfree}.
\end{proof}

Therefore, $H_{(P,C)}$ is a subgroup of index 4 in $G_P$ which has index 2 in $\Gamma_P$, and so $H_{(P,C)}$ is a torsion free subgroup of index 8 in $\Gamma_P$.  This proves Theorem \ref{8foldcover}. 
\end{proof}

Suppose $F$ is some face of $P$.  Let $\Gamma_{(P,F)}$ be the group generated by reflections in the planes supporting each face of $P$ except $F$.  This is a subgroup of index 2 in $\Gamma_P$.  Let $\HH_F$ denote the closed connected subset of $\HH^3$ which contains $P$ and is bounded by the planes supporting $F$ and its images under the action of $\Gamma_{(P,F)}$.  Then the group $\Gamma_{(P,F)}$ acts on $\HH_F$ with quotient $\HH_F / \Gamma_{(P,F)} = P$.  The orbifold structure on $P$ can be thought of as mirroring every face of $P$ except $F$. The face $F$ is the totally geodesic boundary of this orbifold structure on $P$.

Restricting the homomorphism $g$ which computes the mod 2 length of a word gives a map $\Gamma_{(P,F)} \rightarrow \mathbb{Z}/ 2 \mathbb{Z}$.  The kernel of this map will be denoted $G_{(P,F)}$.  This group acts on $\HH_F$ and the quotient orbifold can be visualized in the following way.  Take two copies of $P$, and identify each face of one copy of $P$ to the corresponding face in the other copy, but do not perform this identification if the face is $F$.  Then the resulting space is topologically a 3--ball $B^3$.  The orbifold singularities are a graph isomorphic to the 1--skeleton of $P$ with the edges contained in the face $F$ removed.  The resulting graph has some number of 1--valent vertices which live on the boundary of the 3--ball.  In fact, in the hyperbolic metric, the boundary of the 3--ball is a totally geodesic hyperbolic 2--orbifold.  The edges in the singularity graph all have cone angle $\pi$ as do the cone points on the boundary corresponding to the 1--valent vertices.

As above, a Wirtinger type presentation gives a presentation for $G_{(P,F)}$ where each edge of $P$ but not in $F$ gives a generator of order 2, and additional relations given by the vertices.  In fact, it is evident from the presentations that $G_{(P,F)} = G_P \cap \Gamma_{(P,F)}.$  

Let the faces of $P$ other than $F$ be colored by elements of $(\mathbb{Z} / 2 \mathbb{Z})^2$, and the edges of $P$ other than those of $F$ be colored by the sum of the colors which contain the edge as above.  Call this coloring $C$.  Let $h$ be the homomorphism $h: G_{(P,F)} \rightarrow (\mathbb{Z} / 2 \mathbb{Z})^2$ sending a generator to the color of its corresponding edge.  Let $H_{(P,C,F)}$ be the kernel of $h$, a subgroup of index 8 of $\Gamma_{(P,F)}$.

\begin{theorem}\label{Htgtorsionfree}  $H_{(P,C,F)}$ is torsion free.
\end{theorem}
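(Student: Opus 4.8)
The plan is to avoid redoing any geometry and instead recognize $H_{(P,C,F)}$ as a subgroup of the group $H_{(P,\bar C)}$ from the closed case, whose torsion-freeness is Lemma~\ref{Htorsionfree}; torsion-freeness then passes to subgroups for free.

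The first step is to extend the coloring $C$ of the faces of $P$ other than $F$ to a proper four-coloring $\bar C$ of \emph{all} faces of $P$. Here is why this is possible. The faces of $P$ meeting $F$ form a cyclic chain $p_1,\dots,p_n$: the edges $e_1,\dots,e_n$ of $F$ are cyclically ordered, $p_i$ is the face sharing $e_i$ with $F$, consecutive $p_i,p_{i+1}$ share the vertex between $e_i$ and $e_{i+1}$ hence are adjacent by Lemma~\ref{verteximpliesadjacent}, and the $p_i$ are pairwise distinct since two faces meet in at most one edge. Moreover no two \emph{non}-consecutive $p_i,p_j$ are adjacent: if they were, then $F,p_i,p_j$ would be pairwise adjacent, so by Lemma~\ref{threeadj} they would share a vertex, which would have to be a vertex of $F$, forcing $p_i$ and $p_j$ to be the two faces meeting $F$ there and hence consecutive. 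Thus the subgraph of the face-adjacency graph induced on $\{p_1,\dots,p_n\}$ is exactly the cycle $C_n$, which uses at most three of the four elements of $(\mathbb{Z}/2\mathbb{Z})^2$ under $C$; assigning to $F$ any element of $(\mathbb{Z}/2\mathbb{Z})^2$ not occurring among $p_1,\dots,p_n$ produces the desired proper coloring $\bar C$.

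Next I would check that the coloring homomorphism $h\colon G_{(P,F)}\to(\mathbb{Z}/2\mathbb{Z})^2$ is the restriction to $G_{(P,F)} = G_P\cap\Gamma_{(P,F)}$ of the coloring homomorphism $h\colon G_P\to(\mathbb{Z}/2\mathbb{Z})^2$ built from $\bar C$. Indeed both maps send a generator $a_{ij}=r_ir_j$ to the color of the edge $e_{ij}$, and for any edge $e_{ij}$ not contained in $F$ its $C$-color and its $\bar C$-color agree, being in both cases the sum of the colors of the two faces containing $e_{ij}$, neither of which is $F$. Since $G_{(P,F)}$ is generated, via the Wirtinger-type presentation, by the elements $a_{ij}$ with $e_{ij}\not\subset F$, the two homomorphisms agree on all of $G_{(P,F)}$. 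Consequently $H_{(P,C,F)} = \ker\bigl(h|_{G_{(P,F)}}\bigr) = G_{(P,F)}\cap H_{(P,\bar C)}$, a subgroup of $H_{(P,\bar C)}$. By Lemma~\ref{Htorsionfree} the group $H_{(P,\bar C)}$ is torsion free, and every subgroup of a torsion-free group is torsion free, so $H_{(P,C,F)}$ is torsion free.

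The only delicate point — short as it is — is the coloring-extension step, which is where Lemma~\ref{threeadj} does its work; everything else is bookkeeping. Should one wish to avoid even this, an alternative is to rerun verbatim the loxodromic induction of Lemma~\ref{Htorsionfree}: the length-$2$ base case and the substitution $a_{ij}\rightsquigarrow a_{ik}a_{kj}$ at a trivalent vertex both take place at interior edges and vertices of $P$ and so survive the deletion of the face $F$ and its edges. But the reduction to the closed case is cleaner and is the route I would take.
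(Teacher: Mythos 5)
Your reduction to Lemma \ref{Htorsionfree} hinges on extending the given coloring $C$ of the faces of $P\setminus F$ to a proper coloring $\bar C$ of all of $P$, and the justification you give for that step does not work. You correctly show that the faces $p_1,\dots,p_n$ meeting $F$ induce a cycle in the face-adjacency graph, but you then assert that this cycle ``uses at most three of the four elements of $(\mathbb{Z}/2\mathbb{Z})^2$ under $C$.'' That is exactly what you need, and it is false in general: a proper $4$--coloring of a cycle of length $n\geq 4$ (here $n\geq 5$) can use all four colors (e.g.\ colors $a,b,c,d,b$ in cyclic order around a pentagonal $F$), and the cycle structure only guarantees that the ring is $3$--colorable, not that the \emph{given} $C$ colors it with three colors. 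When all four colors occur among the $p_i$ there is no admissible color for $F$ and $\bar C$ does not exist. This is not a removable technicality for the paper: the theorem is stated for an arbitrary proper coloring of $P\setminus F$, and the colorings actually used later (Theorem \ref{Mhaken}) are obtained by restricting a four-coloring of a composed polyhedron to $P_i\setminus F_i$ — precisely colorings one does not get to choose and which need not extend over $F_i$. So your main argument proves the statement only for extendable colorings, which is weaker than what is stated and used; everything after the extension step (agreement of the two coloring homomorphisms, $H_{(P,C,F)}=G_{(P,F)}\cap H_{(P,\bar C)}$, subgroups of torsion-free groups are torsion free) is fine, but it rests on a step that can fail.

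Your fallback is in fact the paper's proof: the paper simply declares the argument of Lemma \ref{Htorsionfree} to carry over and suppresses it, and it does carry over — but not quite ``verbatim,'' and the point you wave at is worth recording. One must check that every generator $a_{ij}$ of $G_{(P,F)}$ corresponds to an edge $e_{ij}\not\subset F$ having at least one endpoint which is not a vertex of $F$: if both endpoints lay on $F$, the two faces containing $e_{ij}$ would each meet $F$ in an edge joining those same two endpoints, giving multiple edges in the $1$--skeleton, which is impossible. At that interior endpoint the other two edges are again non-$F$ edges carrying the two remaining nonzero colors, so the vertex relation and the substitution of $a_{ij}$ by $a_{ik}a_{kj}$ are available inside $G_{(P,F)}$, and the base cases and the induction then run exactly as in Lemma \ref{Htorsionfree}. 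If you want a complete proof of the theorem as stated, write out that version; the reduction to the closed case, as you have set it up, has a genuine gap at the coloring-extension step.
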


\begin{proof} The proof is essentially the same as that of Theorem \ref{Htorsionfree} and so will be suppressed.  
\end{proof}

Let $M_{(P,C,F)}$ denote the hyperbolic manifold with geodesic boundary $\HH_F / H_{(P,C,F)}$ for some choice of coloring $C$ of $P \setminus F$.  This is an eightfold cover of the hyperbolic orbifold with geodesic boundary $\HH_F / \Gamma_{(P,F)}$.  

%The boundary of $M_{(P,C,F)}$ is a pair of isometric surfaces $\Sigma$ which cover $F$.  

%When the coloring $C$ of $P \setminus F$ extends to a coloring $\hat{C}$ of all of $P$, then $M_{(P,C,F)}$ is a splitting of $M_{(P,\hat{C})}$ along the totally geodesic non-separating surface corresponding to the face $F$.  Of course, this extension of colorings is not always in possible. 

\section{Decomposition}

Let $P_1$ and $P_2$ be a pair of right-angled hyperbolic polyhedra and $F_1 \subset P_1$, $F_2 \subset P_2$ a pair of faces which are isomorphic $k$--gons for some choice of isomorphism.  Let $P$ denote the composition of $P_1$ and $P_2$ along these faces using this isomorphism as described in Section 4.  The polyhedron $P$ has a distinguished prismatic $k$--circuit $c$ which, as a simple closed curve, consists of the edges of $F_1$ and $F_2$ that were identified and whose interiors were deleted in the formation of the composition $P$.  Let $C_1, \dots, C_k$ denote the faces of $P$ which this prismatic $k$--circuit $c$ intersects.  
This curve $c$ bounds in $P$ an embedded topological $k$--gon which will be denoted $F$.

For $i=1,2$, let $\phi_i : P_i \rightarrow P$ denote the topological embedding induced by the composition in the obvious way.  This map sends faces of $P_i$ which are not adjacent to $F_i$ to faces of $P$ by cellular isomorphisms, sends faces of $P_i$ adjacent to $F_i$ into $C_j$ for some $j$, and sends the face $F_i$ to the suborbifold $F$.

%For $i=1,2$, each polyhedron $P_i$ is embedded naturally up to isotopy into $P$ in such a way that each face of $P_i$ which is not $F_i$ is mapped to a face of $P$ and is a cellular isomorphism if the face is not adjacent to $F_i$.  A face which is adjacent to $F_i$ is mapped into some $C_j$.

In $\Gamma_{(P_i, F_i)}$, the subgroup generated by reflections in the faces adjacent to $F_i$ (or indeed any face of $P_i$) is itself a reflection group of a right-angled polygon.   Let this group be denoted $\Gamma_{F_i}$.  Then, the isomorphism which identifies $F_1$ with $F_2$ in the formation of the composition $P$ induces a group isomorphism between $\Gamma_{F_1}$ and $\Gamma_{F_2}$.  Let $G$ denote the isomorphism class of these groups.

\begin{theorem}\label{compositionamalgamation} If $P$ is the composition of $P_1$ and $P_2$ along the faces $F_1 \subset P_1$ and $F_2 \subset P_2$, then $\Gamma_P$ is isomorphic to the free product with amalgamation $$ \Gamma_P \cong \Gamma_{(P_1,F_1)} \ast_G \Gamma_{(P_2, F_2)}.$$
\end{theorem}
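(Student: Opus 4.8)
The plan is to recognize the three groups $\Gamma_{(P_1,F_1)}$, $\Gamma_{(P_2,F_2)}$ and $\Gamma_P$ via their action on a developing picture in $\HH^3$, and then to check directly that $\Gamma_P$ is built from the other two according to the recipe of a free product with amalgamation over $G$. The cleanest route is through covering space / orbifold theory: the suborbifold $F$ of $P$ is the totally geodesic boundary of the orbifold $\HH_{F_i}/\Gamma_{(P_i,F_i)} \cong P_i$ constructed in Section 5, and the whole orbifold $\HH^3/\Gamma_P \cong P$ is obtained by gluing $P_1$ to $P_2$ along $F$. Since $F$ itself carries the orbifold structure whose fundamental group is $G$ (the right-angled polygon reflection group associated to $F_i$), the Seifert–van Kampen theorem for orbifolds immediately gives $\pi_1^{\mathrm{orb}}(P) \cong \pi_1^{\mathrm{orb}}(P_1) \ast_{\pi_1^{\mathrm{orb}}(F)} \pi_1^{\mathrm{orb}}(P_2)$, which is exactly the claimed amalgam. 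To make this rigorous one should prove that the decomposition of $P$ along $F$ is ``good'' in the appropriate sense — the underlying space of $F$ is a disk, $F$ is two-sided, and its orbifold fundamental group injects into that of each $P_i$ — all of which follow from $c$ being a genuine (prismatic) $k$-circuit and from the explicit reflection-group structure.

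A more hands-on argument, which I would actually carry out in detail, avoids quoting an orbifold van Kampen theorem. First I would set up the geometry: realize $P$ in $\HH^3$, and let $\widetilde F$ be the totally geodesic plane containing the face $F$ (the plane meeting all of $C_1,\dots,C_k$); because $c$ is prismatic, $\widetilde F$ separates $\HH^3$ into two half-spaces $\HH^+$ and $\HH^-$. The mirrors of $P$ fall into three families: those lying on the $P_1$-side, those on the $P_2$-side, and the ``frame'' mirrors $C_1,\dots,C_k$ which are each orthogonal to $\widetilde F$ and whose reflections generate (a conjugate of) $G$ acting on $\widetilde F$. Define $\Gamma^+$ (resp. $\Gamma^-$) to be the subgroup of $\Gamma_P$ generated by the $P_1$-side (resp. $P_2$-side) mirrors together with the frame mirrors. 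One identifies $\Gamma^\pm$ with $\Gamma_{(P_i,F_i)}$ by observing that $\Gamma^+$ is precisely the reflection group of the polyhedron $P_1$ with all faces mirrored except $F_1$ — this is the content of the ``$\HH_{F_i}$'' construction of Section 5, and the frame mirrors of $P$ correspond exactly to the mirrored faces of $P_1$ adjacent to $F_1$. The intersection $\Gamma^+ \cap \Gamma^-$ is generated by the frame mirrors, hence is a copy of $G$ by the definition preceding the theorem.

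The core step is then a normal-form / ping-pong argument showing $\Gamma_P = \Gamma^+ \ast_G \Gamma^-$. I would tile $\HH^3$ by the $\Gamma_P$-translates of $P$ and note that $\widetilde F$ and its $\Gamma_P$-translates cut $\HH^3$ into chambers, each of which is a union of $\Gamma_P$-copies of $P$ lying entirely on one side of every wall in its boundary; the dual tree $T$ to this wall arrangement is the Bass–Serre tree one wants. Concretely: any $g \in \Gamma_P$ is a word in the mirror generators; by a standard ``sort the reflections'' reduction — using that a $P_1$-side mirror and a $P_2$-side mirror never share an edge (they meet $\widetilde F$ on opposite sides), so their reflections commute only through the frame — one rewrites $g$ as an alternating product $g_1 g_2 \cdots g_m$ with $g_j$ alternately in $\Gamma^+ \setminus G$ and $\Gamma^- \setminus G$, and checks such a product is nontrivial by tracking which side of $\widetilde F$ the image of the basepoint lands on, i.e. by following the path in $T$. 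This establishes both that $\Gamma^+$ and $\Gamma^-$ generate and that there are no relations beyond those of the amalgam.

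The main obstacle is this last normal-form step — in particular, proving that the ``alternating word'' cannot collapse, which is exactly where one must use that $c$ is prismatic (so $\widetilde F$ is genuinely embedded and two-sided and $G$ injects into each factor) rather than merely a $k$-circuit. An honest treatment essentially reconstructs a piece of Bass–Serre theory from the geometry of the wall $\widetilde F$; if one is willing to cite the orbifold Seifert–van Kampen theorem (equivalently, the fact that $\pi_1$ of a graph of aspherical spaces glued along $\pi_1$-injective subspaces is the corresponding graph of groups), the whole proof collapses to the three identifications $\Gamma^\pm \cong \Gamma_{(P_i,F_i)}$ and $\Gamma^+\cap\Gamma^- \cong G$ together with the observation that $P = P_1 \cup_F P_2$ as orbifolds with $F$ two-sided.
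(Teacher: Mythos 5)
Your first, ``soft'' route is genuinely different from the paper's proof and is viable. The paper argues purely by presentations: it writes down generators $\{s_j\},\{t_j\}$ and the three families of relations for $\Gamma_{(P_1,F_1)} \ast_G \Gamma_{(P_2,F_2)}$, defines $\psi$ on generators by sending each reflection to the reflection in the face of $P$ containing the image of the corresponding face, and exhibits an explicit inverse on the standard generators of $\Gamma_P$ --- an elementary, self-contained Coxeter-presentation check (the face-adjacency graph of $P$ is the union of those of $P_1$ and $P_2$ glued along the full subgraph coming from the frame faces $C_1,\dots,C_k$). Your orbifold Seifert--van Kampen argument instead decomposes the Coxeter orbifold $P$ along the two-sided suborbifold $F$ with $\pi_1^{\mathrm{orb}}(F)\cong G$ injecting into each side; this buys conceptual clarity (it is exactly the orbifold-level statement that the paper later promotes to the manifold cover in Theorem \ref{Mhaken}) at the cost of quoting a nontrivial theorem, and one must be explicit that $P = P_1 \cup_F P_2$ only as topological orbifolds --- which is all van Kampen needs --- together with the identifications of the cut pieces with $\HH_{F_i}/\Gamma_{(P_i,F_i)}$ and of the inclusion-induced maps with the natural ones.

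The ``hands-on'' argument that you say you would actually carry out, however, rests on a false geometric premise. In the hyperbolic realization of the composition $P$ there is in general no totally geodesic plane $\widetilde F$ containing the disk $F$ and orthogonal to the frame mirrors $C_1,\dots,C_k$: the paper points out at the end of Section 4 that $F_1$ and $F_2$ may be non-isometric in the right-angled realizations of $P_1$ and $P_2$, so the realization of $P$ is \emph{not} a geometric gluing of $P_1$ and $P_2$ along a geodesic face, and the $k$-gon bounded by the prismatic circuit $c$ is only a topological suborbifold. Consequently there is no geodesic wall whose $\Gamma_P$-translates cut $\HH^3$ into chambers, no well-defined ``side of $\widetilde F$'' to track, and no orthogonality of the frame mirrors to appeal to; the ping-pong/normal-form step as set up cannot be run. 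To rescue that route you would have to build the wall system combinatorially (in the universal cover of the topological decomposition, i.e.\ essentially reconstruct the Bass--Serre tree of the amalgam you are trying to establish), at which point you are back to either the van Kampen argument or the paper's direct presentation comparison. A smaller point: the claim that a $P_1$-side mirror and a $P_2$-side mirror never share an edge is true but needs justification from the separation property of $c$ (every edge of $P$ lies on one side of $c$ or is one of the crossing edges, whose two incident faces are frame faces), not from the nonexistent geometry of $\widetilde F$.
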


\begin{proof}  Suppose that $F_1$ and $F_2$ are polygons with $k$ edges.  Let $\{s_j\}$ and $\{t_j\}$ denote the generators of $\Gamma_{(P_1,F_1)}$ and $\Gamma_{(P_2, F_2)}$ respectively, which correspond to reflections in the planes supporting each face of $P_i$ except $F_i$.  Index these generators in such a way that $\{s_1, \dots, s_k\}$ and $\{t_1, \dots t_k\}$ correspond to the faces of $P_1$ and $P_2$ (resp.) which are adjacent to $F_1$ and $F_2$ (resp.).  Furthermore, index in such a way that for each $j = 1, \dots k$, $s_j$ and $t_j$ correspond to faces adjacent to $F_1$ and $F_2$ (resp.) which are mapped into the face $C_j$ in $P$ under the embeddings $\phi_i$ of $P_i$ into $P$.   

Then a presentation for the free product with amalgamation $\Gamma_{(P_1,F_1)} \ast_G \Gamma_{(P_2, F_2)}$ is given by a generating set $\mathcal{S} = \{s_j\} \cup \{t_j\}$ with relations $\mathcal{R}$ given by three types of words:

\begin{enumerate}
\item $(s_j)^2$ and $(t_j)^2$ for all $j$.

\item $(s_i s_j)^2$ and $(t_i t_j)^2$ if the corresponding faces in $P_1$ and $P_2$ (resp.) are adjacent.

\item $s_j = t_j$ for $j = 1, \dots k$.
\end{enumerate}

Define a homomorphism $\psi$ from $\Gamma_{(P_1,F_1)} \ast_G \Gamma_{(P_2, F_2)}$ to $\Gamma_P$ in the following way.  If $u \in \mathcal{S}$ corresponds to a face $U \subset P_i$, then send $u$ to the generator of $\Gamma_P$ given by the reflection in the face of $P$ containing the image of $U$ under $\phi_i$.  That this assignment of generators extends to a homomorphism is obvious using the standard presentation for $\Gamma_P$.  

To show that $\psi$ is an isomorphism, an inverse map will be defined.  Let $\{r_i\}$ denote the generators of $\Gamma_P$ in the standard presentation with $r_1, \dots, r_k$ denoting the reflections in the faces $C_1, \dots, C_k$ respectively.  Send a generator $r_j$ with $j > k$ to the generator of the amalgam which was sent to $r_j$ under $\psi$, and send $r_j$ with $1 < j < k$, to $s_j = t_j$.  This map of generators extends to a homomorphism which is inverse to $\psi$.
\end{proof}

For $i = 1,2$, let $j_i : \Gamma_{(P_i, F_i)} \hookrightarrow \Gamma_P$ denote the natural inclusions.  These homomorphisms are induced by the embeddings $\phi_i$.  Similarly, the natural inclusion $k : G \hookrightarrow \Gamma_P$ is induced by the embedding of $F$ into $P$.  Note that $F$ is isomorphic in the category of orbifolds to the quotient of the plane supporting $F_i$ by the action of $\Gamma_{F_i} \cong G$ for either $i=1$ or $i=2$.  In particular, because $k$ is injective, it is natural to think of this embedded suborbifold as being incompressible in $P$.

Consider the manifold $M_{(P,C)} = \HH^3 / H_{(P,C)}$ which is an eightfold cover of the orbifold $P = \HH^3 / \Gamma_P$.  Denote the covering projection by $\rho$.  Let $\Sigma$ denote the closed surface embedded in $M_{(P,C)}$ which covers the embedded suborbifold $F \subset P$.  Then the fundamental group of each component of $\Sigma$ is isomorphic to $G \cap H_{(P,C)}$ and $\Sigma$ is incompressible in $M_{(P,C)}$.  That is, $M_{(P,C)}$ is Haken.

Consider the manifold $M_{(P,C)} - \mathcal{N}(\Sigma)$ obtained by splitting $M_{(P,C)}$ along $\Sigma$ by removing a tubular neighborhood $\mathcal{N}(\Sigma)$.  Since $\Sigma$ covers the separating suborbifold $F \subset P$, the surface $\Sigma$ is separating in $M_{(P,C)}$.  Therefore, $M_{(P,C)} - \mathcal{N}(\Sigma)$ is a pair of manifolds $M_1$ and $M_2$ with boundary homeomorphic to $\Sigma$.  Furthermore, since $F$ splits $P$ into two parts given by $\phi_i(P_i)$ for $i = 1,2$, restricting the covering map $\rho$ to $M_i$ is itself a covering map over $\phi_i(P_i)$ and, in particular, is an eightfold covering map.  It is the eightfold covering map corresponding to the group $H_{(P_i, C_i, F_i)}$ where $C_i$ is the coloring of $P_i \setminus F_i$ gotten by restricting the coloring $C$ of $P$ to the image of the embedding $\phi_i$.

Therefore, by rigidity, $M_i$ is homeomorphic to $M_{(P_i, C_i, F_i)} = \HH_{F_i} / H_{(P_i, C_i, F_i)}$.  In English, this says that $M_i$ is homeomorphic to an eightfold orbifold cover of the polyhedron $P_i$.  All of this will be recorded in the following theorem:

\begin{theorem} \label{Mhaken} The manifold $M_{(P,C)}$ is a Haken manifold with a separating incompressible surface $\Sigma$.  Splitting $M_{(P,C)}$ along $\Sigma$ produces a pair of manifolds $M_i$, $i=1,2$ with $$M_i \cong M_{(P_i, C_i, F_i)} = \HH_{F_i} / H_{(P_i, C_i, F_i)}.$$
\end{theorem}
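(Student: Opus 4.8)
The plan is to assemble the statement from the facts established in the paragraphs preceding it, with Mostow--Prasad rigidity supplying the final identification. There are three ingredients to verify: that $\Sigma$ is incompressible (so $M_{(P,C)}$ is Haken), that $\Sigma$ is separating, and that the two pieces of the splitting are the asserted orbifold covers.

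First, incompressibility. Since $P$ is compact, $M_{(P,C)} = \HH^3/H_{(P,C)}$ is a closed hyperbolic $3$--manifold, and $\Sigma = \rho^{-1}(F)$ is a closed embedded totally geodesic surface because $F$ is carried by the plane supporting $F_i$. Each component $\Sigma_0$ of $\Sigma$ has fundamental group $G \cap H_{(P,C)}$, a subgroup which is torsion--free by Lemma \ref{Htorsionfree} and of finite index in the reflection group $G$ of a right-angled hyperbolic $k$--gon with $k \geq 5$; such a subgroup is the fundamental group of a closed hyperbolic surface, so no component of $\Sigma$ is a sphere. The inclusion $k : G \hookrightarrow \Gamma_P$ is injective (it is the amalgamating factor in the free product with amalgamation of Theorem \ref{compositionamalgamation}), hence so is $\pi_1(\Sigma_0) = G \cap H_{(P,C)} \hookrightarrow H_{(P,C)} = \pi_1(M_{(P,C)})$, this last map being literally an inclusion of subgroups of $\mathrm{Isom}(\HH^3)$. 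Therefore $\Sigma$ is incompressible and $M_{(P,C)}$ is Haken.

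Second, separation and splitting. The suborbifold $F$ separates $P$ into $\phi_1(P_1)$ and $\phi_2(P_2)$, so pulling back along $\rho$ the preimages $\rho^{-1}(\phi_i(P_i))$ are the two sides and $\Sigma$ separates $M_{(P,C)}$; write $M_{(P,C)} - \mathcal{N}(\Sigma) = M_1 \sqcup M_2$ with $\partial M_i \cong \Sigma$. The restriction $\rho|_{M_i} : M_i \to \phi_i(P_i) \cong P_i$ is then again an eightfold orbifold covering, the degrees matching because $F$ is separating and $\rho$ has degree $8$. The remaining, and essential, point is that the subgroup of $\Gamma_{(P_i,F_i)}$ realizing $\rho|_{M_i}$ is exactly $H_{(P_i,C_i,F_i)}$, where $C_i$ is the restriction of the coloring $C$ to $P_i \setminus F_i$. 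This is a bookkeeping computation inside the amalgam of Theorem \ref{compositionamalgamation}: one identifies $\Gamma_{(P_i,F_i)}$ with a free factor of $\Gamma_P$, then intersects the defining conditions of $H_{(P,C)}$ — being in the kernel of the mod $2$ length homomorphism $g$ and of the coloring homomorphism $h$ — with $\Gamma_{(P_i,F_i)}$, and checks these restrict to the conditions defining $H_{(P_i,C_i,F_i)} = G_{(P_i,F_i)} \cap \ker(h_i)$. Granting this, $M_i$ and $M_{(P_i,C_i,F_i)} = \HH_{F_i}/H_{(P_i,C_i,F_i)}$ (a manifold with geodesic boundary by Theorem \ref{Htgtorsionfree}) are complete finite--volume hyperbolic $3$--manifolds with geodesic boundary and isomorphic fundamental groups, so by Mostow--Prasad rigidity they are isometric, in particular homeomorphic.

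The main obstacle is the third step above: carefully matching the colored, length--restricted subgroups on each side of $F$ through the isomorphism of Theorem \ref{compositionamalgamation}. The topology of the situation (incompressibility, separation, the decomposition into $M_1$ and $M_2$) is essentially formal once the injection $G \hookrightarrow \Gamma_P$ is in hand; the real content lies in verifying that $C$ restricts to an admissible coloring $C_i$ of $P_i \setminus F_i$ and that the corresponding kernels are compatible with the amalgamated structure. Once that correspondence is pinned down, Mostow rigidity closes the argument.
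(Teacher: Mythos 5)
Your argument follows the same route as the paper: incompressibility of $\Sigma$ via the injectivity of $G \hookrightarrow \Gamma_P$ coming from the amalgamated product structure of Theorem \ref{compositionamalgamation}, separation because $\Sigma$ covers the separating suborbifold $F$, identification of each side of the splitting as the eightfold cover of $P_i$ determined by the restricted coloring $C_i$, and rigidity to conclude $M_i \cong M_{(P_i,C_i,F_i)}$. The bookkeeping step you defer with ``granting this'' is exactly the step the paper also asserts without detailed verification, so your proposal is correct and essentially identical in approach.
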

\qed

The following result, proved by Agol, Storm and Thurston \cite{ADST}, gives a description of the effect of decomposition on volumes.  

\begin{theorem}\label{AST} If $M$ is a closed hyperbolic Haken 3--manifold and $\Sigma \subset M$ is an incompressible surface, then 
$$\vol(M) \geq \frac{1}{2} V_3 ||D(M - \mathcal{N}(\Sigma))||$$
where $V_3$ denotes the volume of the regular ideal tetrahedron, $||\cdotp||$ denotes the Gromov norm, and $D()$ denotes manifold doubling.
\end{theorem}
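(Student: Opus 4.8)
This is a theorem of Agol, Storm and Thurston \cite{ADST}, whose proof ultimately rests on Perelman's Ricci flow with surgery, so I only sketch the architecture of the argument. The plan is to convert the pair $(M,\Sigma)$ into a closed $3$--manifold carrying a metric whose scalar curvature is essentially bounded below by $-6$, and then to invoke Perelman's computation of the Yamabe (sigma) invariant of a $3$--manifold in terms of its simplicial volume.

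First I would replace $\Sigma$ by an embedded least-area surface $\Sigma_0$ in its isotopy class; this exists by minimal surface theory (Freedman--Hass--Scott, using Schoen--Yau/Meeks--Yau) precisely because $\Sigma$ is incompressible in the hyperbolic manifold $M$, and in particular $\Sigma_0$ has vanishing mean curvature. Cutting $M$ along $\Sigma_0$ produces a compact manifold $\widehat M$, hyperbolic on its interior and with minimal boundary, satisfying $\vol(\widehat M)=\vol(M)$ and $\widehat M\cong M-\mathcal N(\Sigma)$, so $\|D(M-\mathcal N(\Sigma))\|=\|D\widehat M\|$. Since the Gromov norm is additive over disjoint unions it suffices to treat each component of $\widehat M$ separately, and the target inequality becomes $\vol(\widehat M)\geq\tfrac12 V_3\,\|D\widehat M\|$ for $\widehat M$ compact, hyperbolic, with nonempty minimal boundary.

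Next I would form the double $D\widehat M=\widehat M\cup_\partial\widehat M$ with its glued metric $\bar g$, which is smooth and hyperbolic away from the seam and Lipschitz across it. Because the seam is minimal, the distributional scalar curvature of $\bar g$ has no negative singular part along it (a positive jump in mean curvature would create one), so a standard mollification of corner metrics (Miao-type smoothing) produces, for each $\epsilon>0$, a smooth metric $g_\epsilon$ on $D\widehat M$ with $R(g_\epsilon)\geq-6-\epsilon$ and $\vol(g_\epsilon)\leq 2\vol(\widehat M)+\epsilon$; scaling the metric by $1+\epsilon/6$ then gives $R\geq-6$ at the cost of an $O(\epsilon)$ change in volume. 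I would then quote Perelman's theorem (Ricci flow with surgery, combined with Gromov--Thurston and Soma's additivity of the simplicial norm under torus gluings): for a closed oriented aspherical $3$--manifold $N$ the Yamabe invariant satisfies $\sigma(N)=-6\,(V_3\|N\|)^{2/3}$, equivalently $\inf_g\int_N(R_g^{-})^{3/2}\,dV_g=6^{3/2}V_3\|N\|$. The manifold $D\widehat M$ is aspherical (each copy of $\widehat M$ is aspherical and $\Sigma_0$ is incompressible in it), hence admits no positive scalar curvature metric and $\sigma(D\widehat M)\le 0$, so applying the formula with $N=D\widehat M$, $g=g_\epsilon$ and using $R_{g_\epsilon}^{-}\leq6$ yields $6^{3/2}V_3\|D\widehat M\|\leq\int(R_{g_\epsilon}^{-})^{3/2}\,dV_{g_\epsilon}\leq6^{3/2}\vol(g_\epsilon)$, hence $V_3\|D\widehat M\|\leq\vol(g_\epsilon)\leq2\vol(\widehat M)+O(\epsilon)$; letting $\epsilon\to0$ and summing over components of $\widehat M$ finishes the proof.

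The genuine obstacle is Perelman's determination of $\sigma(D\widehat M)$, which is the deep analytic ingredient and which I would invoke as a black box. Among the elementary steps, the one requiring the most care is the smoothing of the doubled metric: one must verify that doubling along a \emph{minimal} surface preserves the scalar curvature lower bound after mollification. This is exactly where it is essential to have passed from $\Sigma$ to a least-area representative, since an arbitrary incompressible surface need not be minimal.
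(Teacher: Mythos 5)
The paper does not prove this statement at all: it is quoted verbatim from Agol--Storm--Thurston \cite{ADST} and used as a black box, so there is no internal proof to compare against. Your sketch faithfully reproduces the architecture of the actual ADST argument --- replacing $\Sigma$ by an embedded least-area representative (Freedman--Hass--Scott, Schoen--Yau, Meeks--Yau), cutting and doubling across the resulting minimal boundary, smoothing the Lipschitz doubled metric while preserving the scalar curvature bound $R\geq -6$ (possible precisely because the seam is minimal), and then invoking Perelman's Ricci-flow estimate relating $\int (R^-)^{3/2}$ to $V_3\|\cdot\|$ --- and the inequality chain you write is in the correct direction. The only points you gloss over, which ADST must also treat, are the degenerate cases in the minimization (a one-sided or non-embedded least-area representative, or a minimizing sequence that collapses onto a surface of smaller area), but as a sketch that defers the deep analytic input to \cite{ADST} and Perelman, your proposal is correct and consistent with the source the paper relies on.
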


If $M - \mathcal{N}(\Sigma)$ admits a hyperbolic structure with totally geodesic boundary, then $D(M - \Sigma)$ is a disjoint union of closed hyperbolic manifolds and so the simplicial volume $V_3 ||D(M - \mathcal{N}(\Sigma))||$ coincides with the hyperbolic volume $\vol(D(M - \mathcal{N}(\Sigma)) = 2 \vol(M - \mathcal{N}(\Sigma))$.  Therefore the above theorem implies that $\vol(M) \geq \vol(M - \mathcal{N}(\Sigma))$.

In the particular case of the manifold $M_{(P,C)}$ which when split along its incompressible surface $\Sigma$ produces the manifolds $M_1$ and $M_2$, Theorem \ref{AST} implies that $\vol(M_{(P,C)}) \geq \vol(M_1) + \vol(M_2)$.  Since $M_{(P,C)}$, $M_1$ and $M_2$ are eightfold covers of the polyhedra $P$, $P_1$ and $P_2$ respectively, the following result follows immediately:

\begin{theorem} \label{decompvoldecresing} If $P$ is a right-angled hyperbolic polyhedron which is the composition of right-angled hyperbolic polyhedra $P_1$ and $P_2$, then 
$$\vol(P) \geq \vol(P_1) + \vol(P_2).$$ \qed

\end{theorem}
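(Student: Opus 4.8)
The plan is to transfer the statement to the eightfold manifold covers built in Section~5 and then feed the cut-open cover into the Agol--Storm--Thurston inequality (Theorem~\ref{AST}). First I would fix a four-coloring $C$ of the faces of $P$ and form the closed hyperbolic manifold $M_{(P,C)} = \HH^3/H_{(P,C)}$, which is an eightfold cover of the orbifold $P$, so that $\vol(M_{(P,C)}) = 8\,\vol(P)$. Let $C_i$ denote the restriction of $C$ to $P_i \setminus F_i$ under the embedding $\phi_i$. By Theorem~\ref{Mhaken}, $M_{(P,C)}$ is Haken with a separating incompressible surface $\Sigma$ covering the suborbifold $F \subset P$, and cutting along $\Sigma$ produces a disjoint union $M_1 \sqcup M_2$ with $M_i \cong M_{(P_i,C_i,F_i)} = \HH_{F_i}/H_{(P_i,C_i,F_i)}$; each $M_i$ is an eightfold cover of $P_i$, hence $\vol(M_i) = 8\,\vol(P_i)$. (The incompressibility of $\Sigma$ ultimately descends from the injectivity of the amalgamation inclusion $G \hookrightarrow \Gamma_P$ guaranteed by Theorem~\ref{compositionamalgamation}.)

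Next I would apply Theorem~\ref{AST} to the pair $(M_{(P,C)}, \Sigma)$ to obtain
$$\vol(M_{(P,C)}) \;\geq\; \frac{1}{2} V_3 \, \| D(M_{(P,C)} - \mathcal{N}(\Sigma)) \|.$$
The crucial point is that $M_{(P,C)} - \mathcal{N}(\Sigma) = M_1 \sqcup M_2$ is a disjoint union of finite-volume hyperbolic manifolds with \emph{totally geodesic} boundary, since by construction $\partial M_{(P_i,C_i,F_i)}$ is exactly the geodesic boundary coming from the unmirrored face $F_i$. Doubling such a manifold yields a closed hyperbolic manifold, and for closed hyperbolic $3$--manifolds the simplicial volume $V_3\|\cdot\|$ equals the Riemannian volume by Gromov--Thurston proportionality. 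Hence $V_3 \| D(M_1 \sqcup M_2) \| = \vol(D(M_1 \sqcup M_2)) = 2\,\vol(M_1) + 2\,\vol(M_2)$, and the displayed inequality collapses to $\vol(M_{(P,C)}) \geq \vol(M_1) + \vol(M_2)$. Dividing by $8$ using the covering-degree computations above yields precisely $\vol(P) \geq \vol(P_1) + \vol(P_2)$.

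Since every ingredient is already in hand --- Theorem~\ref{Mhaken} supplies the Haken structure and identifies the cut pieces as the geodesic-boundary manifolds of Section~5, and Theorem~\ref{AST} is quoted outright --- the argument is essentially bookkeeping about degrees of covers. The one place where genuine care is warranted is the replacement of the Gromov-norm term by an honest volume: this requires that the cut-open manifold be not merely homotopy equivalent to, but actually carry, a hyperbolic metric with totally geodesic boundary, which is exactly the content of the homeomorphisms $M_i \cong M_{(P_i,C_i,F_i)}$ in Theorem~\ref{Mhaken} combined with Mostow--Prasad rigidity. Once that is granted, the chain of (in)equalities is immediate, which is why the result follows from the preceding development with no further machinery.
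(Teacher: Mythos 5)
Your proposal is correct and follows the paper's own argument essentially verbatim: pass to the eightfold manifold cover $M_{(P,C)}$, use Theorem \ref{Mhaken} to identify the pieces obtained by cutting along $\Sigma$ with the geodesic-boundary covers $M_{(P_i,C_i,F_i)}$, apply Theorem \ref{AST} with the Gromov-norm term converted to honest hyperbolic volume via the totally geodesic boundary, and divide by the covering degree $8$. No discrepancies to report.
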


\section{Edge Surgery}

This section will be devoted to defining and studying a simple combinatorial operation on right-angled polyhedra called {\it edge surgery}.  This operation together with decomposition will simplify any given right-angled hyperbolic polyhedron into a set of L\"obell polyhedra.  First, some definitions:

Two distinct faces $F_1$ and $F_2$ of $P$ are said to be {\it
edge connected} if they are nonadjacent and there exists an edge $e$ of $P$
connecting a vertex of $F_1$ to a vertex of $F_2$.  Such an edge $e$ is said
to {\it edge connect} $F_1$ and $F_2$.  Note that since $P$ has trivalent 1--skeleton, every edge of $P$ edge connects a unique pair of faces.

A face $F$ of $P$ is called {\it large} if it is has $6$ or
more edges.  An edge $e$ is called {\it good} if it edge connects two large
faces.  A good edge is called {\it very good} if it is not a part of any
prismatic $5$--circuit.

If $e$ is an edge of $P$, call the combinatorial operation of deleting the interior $e$ and demoting its endpoints to nonvertices {\it edge surgery along e} (see Figure \ref{fig: es}).  Call the line segment in $P_1$ that was removed the {\it trace} of the edge $e$ and the vertices that were demoted the {\it traces} of the vertices. 

\begin{figure}
	\centering
	\includegraphics[bb=0 0 210 97]{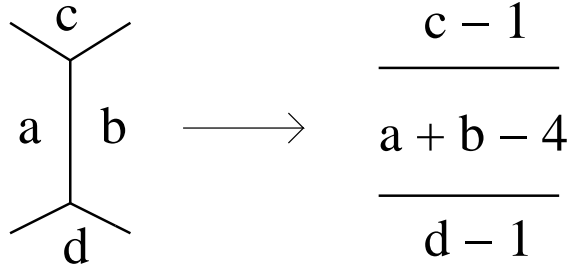}
% edgesurgery.eps: 300dpi, width=1.78cm, height=0.82cm, bb=0 0 210 97
	\caption{Edge surgery.  Here, the faces labelled c and d are edge connected.  The labels a,b,c,d also represent the number of edges of the corresponding face, and this figure shows the effect of edge surgery on combinatorics.}
	\label{fig: es}
\end{figure}

\begin{theorem} \label{edgesurgeryrightangled} If a right-angled hyperbolic polyhedron $P_0$ has a very good edge $e$ and $P_1$ is the result of edge surgery along $e$, then $P_1$ is also a right-angled hyperbolic polyhedron.
\end{theorem}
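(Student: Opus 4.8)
The plan is to use Pogorelov's combinatorial characterization (Corollary \ref{rightclassified}): I must show that $P_1$ has a trivalent $1$--skeleton and contains no prismatic $3$-- or $4$--circuits. Trivalence is immediate: edge surgery deletes one edge and demotes its two endpoints (each formerly a trivalent vertex) to nonvertices, so every surviving vertex keeps exactly three incident edges. The entire content of the theorem is therefore the prismatic circuit condition, and this is where the hypotheses ``good'' (edge connects two \emph{large} faces) and ``very good'' (not on any prismatic $5$--circuit) will be used.

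First I would set up notation mirroring Figure \ref{fig: es}: let $e$ edge-connect the large faces $C$ and $D$, let $A$ and $B$ be the two faces adjacent to $e$ along $e$ itself (so $A$, $B$ are the faces getting merged-into in the surgery picture, each adjacent to both $C$ and $D$), and in $P_1$ the trace of $e$ lies on both $C$ and $D$. Now suppose for contradiction that $P_1$ has a prismatic $3$-- or $4$--circuit $d$. If $d$ can be isotoped off the trace of $e$ without changing which edges it crosses, then $d$ is already a prismatic circuit of $P_0$ of the same length, contradicting that $P_0$ is right-angled. So $d$ must cross the trace of $e$; since $d$ and the trace both live on $S^2 = \partial P_1$ and $d$ is a simple closed curve, $d$ crosses the trace an even number of times, hence (as the trace is a single arc interior to $C\cup D$) exactly twice, passing through $C$ and $D$. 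The idea is then to ``reinstate'' the edge $e$: pushing the two strands of $d$ across where $e$ used to be converts $d$ into a circuit $d'$ of $P_0$ that now crosses $e$ and picks up at most the edges of $C$ and $D$ adjacent to the endpoints of $e$'s trace. Careful bookkeeping shows $d'$ has length at most $5$ in $P_0$ (the two crossings on the trace get replaced by $e$ together with up to two more edges, net change $+1$ to the count, on a $3$- or $4$-circuit — the $4$-circuit case being the tight one giving a $5$-circuit), and one checks the endpoint-distinctness needed for $d'$ to be \emph{prismatic} using Lemmas \ref{threeadj} and \ref{fouradj} exactly as in the proof of Theorem \ref{p5cdec}. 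A collision of endpoints would force $C$ or $D$ to be small (a triangle or quadrilateral in the relevant local picture), contradicting that $e$ is \emph{good}.

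This produces a prismatic circuit $d'$ in $P_0$ of length $\le 5$. If its length is $3$ or $4$ we contradict that $P_0$ is right-angled outright. The remaining case is that $d'$ is a prismatic $5$--circuit in $P_0$ — and here is exactly where the ``very good'' hypothesis enters: I would argue that $d'$ must then pass through the edge $e$ (this is forced by the construction, since $d'$ was obtained from $d$ precisely by routing it across $e$), so $e$ lies on a prismatic $5$--circuit of $P_0$, contradicting that $e$ is very good. Hence no prismatic $3$-- or $4$--circuit $d$ can exist in $P_1$, and Corollary \ref{rightclassified} finishes the proof.

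The main obstacle I anticipate is the combinatorial case analysis in the ``push $d$ back across $e$'' step: one must verify that the rerouted curve $d'$ is genuinely \emph{prismatic} (distinct endpoints on all its edges), that its length is controlled as claimed in every configuration of how $d$ meets $C$ and $D$ relative to the trace endpoints, and that a length-$3$ or $4$ outcome is always available unless $d$ enters and exits through edges of $C$ (resp. $D$) \emph{adjacent} to the trace endpoints — in which case the largeness of $C$ and $D$ is what saves us. This is the same flavor of argument as in Theorem \ref{p5cdec} but with the extra subtlety that \emph{two} faces ($C$ and $D$), not one pentagon, are involved, so the Lemma \ref{fouradj} application has to be invoked on each side, and one must be careful that the fourth face in a would-be $4$-circuit is handled correctly. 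Everything else (trivalence, the reduction to the case where $d$ crosses the trace) is routine.
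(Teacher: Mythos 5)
Your plan follows the paper's strategy exactly: check trivalence, assume a prismatic $3$-- or $4$--circuit $d$ in $P_1$, push it back into $P_0$, and contradict either the absence of prismatic $3$-- and $4$--circuits in $P_0$ or (via the arithmetic $3\to 4$, $4\to 5$) the hypothesis that $e$ lies on no prismatic $5$--circuit. However, the execution of the central step rests on an incorrect topological picture. After surgery the two faces that contained $e$ merge into a \emph{single} face $F$ of $P_1$, and the trace of $e$ is an arc properly embedded in $F$ whose endpoints lie in the interiors of the two merged edges $F\cap C$ and $F\cap D$; it does not ``lie on both $C$ and $D$''. Your parity claim --- that $d$ must cross the trace an even number of times, hence exactly twice --- is therefore false: the trace separates $F$ into two halves, and the arc $d\cap F$ crosses it an even number of times precisely when it enters and exits $F$ on the same side (in which case $d$ can be isotoped off the trace and survives as a prismatic $3$-- or $4$--circuit of $P_0$, the easy contradiction), and an odd number of times, reducible to exactly one crossing, when it enters and exits on opposite sides. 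That single-crossing case is the whole point: reinstating $e$ adds exactly one edge to the circuit, turning a $3$--circuit into a prismatic $4$--circuit of $P_0$ and a $4$--circuit into a prismatic $5$--circuit of $P_0$ containing $e$. Your bookkeeping (``the two crossings on the trace get replaced by $e$ together with up to two more edges, net change $+1$'') does not describe an actual configuration, and taken literally your even-crossing premise would eliminate exactly the case in which very-goodness is needed.

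A second, smaller point: the prismaticity verification you defer to Lemmas \ref{threeadj} and \ref{fouradj} and to the largeness of $C$ and $D$ is not where the difficulty lies, and the paper does not need it. One first disposes of the cases where $d$ meets $F$ in one of the merged edges $F\cap C$, $F\cap D$ (the only edges of $F$ containing traces of the endpoints of $e$) or in two edges on the same side of the trace, by isotoping $d$ off the trace. In the remaining case the two edges of $F$ crossed by $d$ are honest edges of $P_0$ not incident to either endpoint of $e$ (the only edges of $P_0$ at those vertices are $e$ and the four edges absorbed into the merged edges), so adjoining $e$ keeps all endpoints distinct automatically. Largeness of $C$ and $D$ never enters explicitly; in a right-angled polyhedron an edge emanating from a pentagonal face already lies on the trivial prismatic $5$--circuit around that pentagon, so it is excluded by the very-good hypothesis itself. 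So the overall route is the right one, but the key case analysis as you describe it would not go through as written and needs to be redone with the correct picture of the merged face and a single transverse crossing of the trace.
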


\begin{proof} The conditions outlined by Andreev's characterization of right-angled hyperbolic polyhedra must be checked.  That is, it must be shown that $P_1$ has trivalent $1$--skeleton and no prismatic $3$ or $4$--circuits.  

It is clear that $P_1$ has trivalent $1$--skeleton as the edge that
is deleted has its endpoints demoted to nonvertices while all other vertices of $P_0$ are left unaffected.

Suppose for the purposes of establishing a contradiction that $P_1$ contains a prismatic $3$ or $4$--circuit $c$.  If necessary, perturb $c$ slightly so that it does not intersect the traces of the endpoints of $e$.  Let $F$ denote the face of $P_1$ containing the trace of $e$.  If $c$ does not intersect the face $F$ at all, then it is clear that $P_0$ contains a prismatic $3$ or $4$--circuit which is a contradiction.  So suppose $c$ intersects $F$.  Then $c$ intersects the boundary of the polygon $F$ in exactly two edges $d_1$ and $d_2$.  

If $d_1$ or $d_2$ is an edge which contains the trace of an endpoint of $e$, then via an isotopy, $c$ can be made to intersect $d_1$ and $d_2$ on the same side of the trace of $e$ and then further pushed by an isotopy to miss the trace of $e$ completely.  Therefore the curve $c$ also determines a prismatic $3$ or $4$--circuit in $P_0$ which is a contradiction as $P_0$ is a right angled hyperbolic polyhedron.  A similar argument shows that $d_1$ and $d_2$ cannot be edges which lie on the same side of the trace of $e$.

Suppose then that $d_1$ and $d_2$ do not contain the trace of a vertex of $e$ and are on opposite sides of the trace of $e$.  Via an isotopy, $c$ can be made to intersect the trace of $e$ in exactly 1 point.  Then $c$, being a prismatic $3$ or $4$--circuit in $P_1$, determines a prismatic $4$ or $5$--circuit (resp.) in $P_0$ of which $e$ is a member.  This is a contradiction as $P_0$ cannot have a prismatic $4$--circuit and $e$ is very good.
\end{proof}

\begin{theorem}\label{goodexist} Let $P$ be a right-angled hyperbolic polyhedron which is not of L\"obell type.  Then either it has a good edge or it is decomposable.  If $P$ does not have a good edge, then $P$ is decomposable into a pair of right-angled polyhedra, one of which is a dodecahedron.
\end{theorem}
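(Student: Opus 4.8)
Since the second assertion implies the first, it suffices to assume $P$ has no good edge and produce a decomposition of $P$ into two right-angled hyperbolic polyhedra one of which is a dodecahedron. Note first that $P$, not being of L\"obell type, is in particular not a dodecahedron, so by Lemma~\ref{twelvepentagons} it has more than $12$ faces and hence at least one large face.

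\emph{Step 1 (what ``no good edge'' means combinatorially).} Let $p$ be a pentagon with neighbouring faces $Q_1,\dots,Q_5$ in cyclic order, $Q_i$ adjacent to $p$ along the $i$th edge of $p$; by Lemma~\ref{verteximpliesadjacent} the $Q_i$ are distinct and consecutive ones are adjacent, so they form a ``dodecahedral flower'' $L^5$ around $p$. Inspecting the three faces at each vertex of $p$ and using trivalence, the $i$th edge of $p$ edge connects $Q_{i-1}$ and $Q_{i+1}$ (indices mod $5$); so having no good edge forces that $Q_{i-1}$ and $Q_{i+1}$ are never both large, that is, at most two of $Q_1,\dots,Q_5$ are large, and then only if consecutive. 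By the same token, at a vertex $v$ of a large face $L$ the edge through $v$ not lying on $L$ edge connects $L$ to the third face at its other endpoint, which must therefore be a pentagon. Thus: every pentagon has at most two large neighbours (consecutive if two), and the face opposite a large face across any of its vertices is a pentagon.

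\emph{Step 2 (finding a dodecahedral flower with a large collar).} I would find a pentagon $p$ whose five neighbours $Q_1,\dots,Q_5$ are all pentagons and such that the five faces bounding the outer $10$-gon of the flower $p\cup Q_1\cup\dots\cup Q_5$ are five distinct large faces $C_1,\dots,C_5$, with $C_i$ adjacent to $Q_i$ and $Q_{i+1}$. First one shows a pentagon with all-pentagon neighbours (a ``deep'' pentagon) exists: beginning with a large face $L$, the second constraint of Step~1 forces pentagons opposite the $\geq 6$ vertices of $L$, and feeding the ``at most two large neighbours'' constraint inward drives one to a pentagon surrounded entirely by pentagons; it is here that $P$ being \emph{not} L\"obell is used, since in $L(n)$ the analogous search runs forever around the equatorial band. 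Lemmas~\ref{threeadj} and~\ref{fouradj} then identify the collar faces $C_i$ and show they are distinct and genuinely ring the flower. Finally, if some $C_i$ were a pentagon one pushes the configuration outward and repeats; the point to prove is that the only obstruction to ending with all $C_i$ large is that $P$ is itself some $L(n)$, which is excluded.

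\emph{Step 3 (cutting, and the main obstacle).} Given the configuration of Step~2, let $c$ be the simple closed curve lying just outside the flower's $10$-gon, passing once through each $C_i$ and crossing the five ``tip'' edges of the flower; its ten endpoints — the five tip vertices and five further vertices, all distinct because the faces $C_i$ are distinct — are distinct, so $c$ is a prismatic $5$-circuit. The arc of $c$ inside $C_i$ cuts off, on the flower side, exactly a pentagon (the two flower edges shared with $Q_i,Q_{i+1}$, two half-edges, and the trace), and on the other side an $(E(C_i)-1)$-gon, which has at least five edges since $C_i$ is large; hence $c$ has no flats. By Theorem~\ref{p5cdec}, $P$ is decomposable along $c$ into right-angled hyperbolic polyhedra $P_1$ (the flower side, capped with a pentagon) and $P_2$. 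The faces of $P_1$ are the cap, the five pentagons cut from the $C_i$, the petals $Q_1,\dots,Q_5$, and $p$ — twelve pentagons in all — so by Lemma~\ref{twelvepentagons}, $P_1$ is a dodecahedron, completing the proof. Steps~1 and~3 are bookkeeping with the earlier lemmas and Theorem~\ref{p5cdec}; the real work is Step~2, namely producing a dodecahedral flower whose entire collar is large, equivalently showing that the only way the ``deepening'' procedure can fail is for $P$ to be a L\"obell polyhedron. I expect this to require a careful analysis of the neighbourhood of a large face, with the smallest cases $E(L)=6,7$ treated by hand, together with an extremal choice — say a large face, or a deep pentagon, minimising some complexity — to force the surrounding pattern to close up as required.
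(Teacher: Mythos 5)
Your Steps 1 and 3 are essentially correct and coincide with the endgame of the paper's argument: the decomposable case there is exactly a dodecahedral flower ringed by large faces, cut along a prismatic $5$--circuit with no flats, split via Theorem \ref{p5cdec}, with the flower side a dodecahedron (your count of twelve pentagonal faces plus Lemma \ref{twelvepentagons} is a clean way to finish that). The genuine gap is Step 2, which you explicitly leave as an expectation: the existence, when $P$ has no good edge and is not L\"obell, of a pentagon whose five neighbours are pentagons and whose collar consists of five distinct large faces is the entire content of the theorem, and the local constraints you derive in Step 1 (each pentagon has at most two, necessarily consecutive, large neighbours; the face edge connected to a large face across one of its vertices is a pentagon) do not by themselves yield it. Nothing in a purely local ``feed the constraint inward'' propagation rules out, for instance, a pentagon meeting the boundary of the pentagonal region in two non-consecutive edges, a run of two or three consecutive ``outward'' pentagons, or a collar face repeating; in the paper these configurations are eliminated not by the no-good-edge hypothesis but by exhibiting prismatic $3$-- and $4$--circuits (via Lemmas \ref{threeadj} and \ref{fouradj}) and invoking Andreev's conditions, and the non-L\"obell hypothesis enters precisely in the remaining ``all inward'' and ``all outward with $n=5$'' configurations. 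Your sketch never brings in the prismatic circuit conditions at all, so it cannot force the collar to close up as required.

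For comparison, the paper organizes Step 2 globally rather than by deepening from a single large face: it takes a \emph{maximal} connected union $X$ of large faces, so that each complementary disk has boundary circuit all of whose edges lie on pentagons inside the disk, and then classifies how a pentagon can meet that circuit (in five, four, three edges; in two non-adjacent edges; ``inward'' in one edge; ``outward'' in two adjacent edges). Every case except the all-outward dodecahedral-flower case either produces a good edge directly or contradicts the absence of prismatic $3$-- or $4$--circuits; the all-inward case is where $P$ would be forced to be $L(n)$. If you want to salvage your extremal ``deepening'' strategy, you would need to supply exactly this kind of case analysis (or an equivalent argument) showing that the process cannot terminate in any mixed configuration and that the collar faces are distinct and large; as written, that argument is missing.
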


\begin{proof} Let $X$ be a maximal connected subset of $\partial P$ which contains only large faces.  Topologically, $X$ is a subsurface of $\partial P \cong S^2$ and therefore is homeomorphic to a sphere with $k$ disks removed.  By Lemma \ref{twelvepentagons}, $P$ must have at least 12 pentagons and if $P$ is not a dodecahedron, then it must have some number of faces which are not pentagons.  Thus $X \neq \emptyset$ and $k \geq 1$.  Let $D_1, D_2, \dots, D_k$ denote the disks of $\partial P \setminus X$ and label their boundaries $\partial D_i = S_i$.  

Note that the $S_i$ are combinatorially polygons, and by maximality in the way $X$ was defined, every edge of $S_i$ is an edge of a pentagon lying in the disk $D_i$.  Let $S$ and $D$ denote some fixed boundary/disk pair.  Suppose that $Q$ is a pentagon with at least one edge belonging to $S$.  The proof breaks down into many cases.
\smallskip

{\bf Case A:} Suppose first all $5$ edges of $Q$ lie on $S$.  Then evidently $D = Q$.  As every face in $X$ is large, any edge of $Q$ is a good edge.
\smallskip

{\bf Case B:} Suppose $4$ edges of $Q$ lie on $S$.  If the edges of $Q$ are labelled cyclicly by $e_i, i = 1,2, \ldots 5$, then without loss of generality assume $e_1, \ldots, e_4$ lie on $S$.  Then it is evident that $e_2$ and $e_3$ are both good edges.
\smallskip

{\bf Case C:} Suppose $3$ edges of $Q$ lie on $S$.  Again, label the edges of $Q$ cyclicly by $e_i$.  Suppose that these three edges are adjacent, without loss of generality, 
say, $e_1, e_2, e_3$ lie on $S$.  Then it is evident that $e_2$ is a good edge.  But there is another possibility.  It could be that, say, $e_1$, $e_2$, and 
$e_4$ lie on $S$.  In this case, both $e_3$ and $e_5$ are good edges.
\smallskip

{\bf Case D:} Suppose then that every pentagon with an edge lying on $S$ meets $S$ in one or two edges.  If $Q$ is a pentagon with exactly two nonadjacent edges lying on $S$, then the edge of $Q$ which is adjacent to both of them must be good.  Thus, suppose that every pentagon with exactly two edges lying on $S$ does so in a pair of adjacent edges. Call a pentagon {\it inward} if it intersects $S$ in one edge and {\it outward} if in two adjacent edges.  This case breaks down into a number of subcases:
\smallskip

{\bf Case D.1:} Suppose that all pentagons lying on $S$ are inward.  Then $X$ is a single large face of $P$ as every edge emanating from a vertex of $X$ passes into the interior of $D$.  Then $X$ along with the faces adjacent to $X$ form a pentagonal flower.

If $X$ is edge connected to some other large face in the interior of $D$, then a good edge exists.  So suppose then $X$ is edge connected only to pentagons.  These pentagons can be arranged along the boundary of the pentagonal flower in only one way so that $P$ must be the L\"obell polyhedron $L(n)$ where $n$ is the number of edges of $X$.  This is a contradiction.
\smallskip

{\bf Case D.2:} Suppose that some pentagons lying on $S$ are inward and some are outward.  The proof breaks down into even more subcases depending on the number of consecutive outward pentagons incident to $S$: 
\smallskip

{\bf Case D.2.1:} Suppose there is a set of three consecutive outward pentagons  $A$, $B$, and $C$ with $B$ adjacent to both $A$ and $C$.  Then there is a face $G$ adjacent to these pentagons lying in $D$.  If $G$ is large, then any edge which is the intersection of two of these pentagons is good.  

So suppose instead that $G$ is a pentagon.  Then $G$ is adjacent to $A$, $B$, $C$, and two other faces $H_1$ and $H_2$ which are adjacent to $A$ and $C$ respectively.  Each $H_i$ shares a vertex with $A$ or $C$ lying on $S$ and, therefore, must themselves lie on $S$ and so must be pentagons.  Note also that $H_1$ and $H_2$ are adjacent as they are each adjacent to $G$ in edges which are adjacent.

Suppose $H_1$ is an inward pentagon.  Label the edge of $H_1$ lying on $S$ by $e_1$.  Label the edge of $H_1$ adjacent to $e_1$ but which is not incident to $A$ by $f_1$.  Then there is a pentagon $J$ which is adjacent to $H_1$ in $f_1$ ($J$ cannot be adjacent to $A$ as $A$ is outward).  Note that the edge $g_1$ of $J$ which shares exactly one endpoint with both $e_1$ and $f_1$ must lie on $S$.  Note also that $J$ is adjacent to $H_2$ in some edge $f_2$ since $f_1$ contains a vertex of $H_2$.  So $H_2$ is inward and if $e_2$ denotes the edge of $H_2$ lying on $S$, then $e_2$ and $f_2$ are adjacent.  Note that the edge $g_2$ of $J$ which shares exactly one endpoint with both $e_2$ and $f_2$ must lie on $S$.  Thus $g_1$ and $g_2$ are nonadjacent edges in $J$ which lie on $S$ and so $J$ is neither inward nor outward.  This is a contradiction which shows that $H_1$ is outward.  A similar argument applies for $H_2$.

%Suppose $C_1$ is inward.  Label the edge of $C_1$ lying on $S$ by $e_1$.  Label the edge of $C_1$ adjacent to $e_1$ but not incident to $B_1$ by $f_1$.  Then there exists a pentagon $J$ which is adjacent to $C_1$ in $f_1$ ($J$ cannot be adjacent to $B_1$ as $B_1$ is inward).  Note that the edge $g_1$ of $J$ which is adjacent to $e_1$ but which is not $f_1$ must lie on $S$.  Note also $J$ is adjacent to $C_2$ in some edge $f_2$ since $f_1$ contains a vertex of $C_2$.  So $C_2$ is inward and if $e_2$ denotes the edge of $C_2$ lying on $S$, then $e_2$ and $f_2$ are adjacent.  Note that the edge $g_2$ of $J$ which is adjacent to $e_2$ but which is not $f_2$ must lie on $S$.  Thus $g_1$ and $g_2$ are nonadjacent edges in $J$ and so $J$ is neither inward nor outward.  This is a contradiction which shows that $C_1$ is outward. A similar argument applies for $C_2$.  

So $H_1$ and $H_2$ must both be outward.  Thus every pentagon lying on $S$ is outward, which is a contradiction.

\smallskip

{\bf Case D.2.2:} Suppose there is a set of exactly two consecutive outward pentagons, call them $A_1$ and $A_2$.  Let $B_1$ and $B_2$ be the inward pentagons lying on $S$ adjacent to $A_1$ and $A_2$ respectively. Let $C_1$ and $C_2$ be the pentagons lying on $S$ adjacent to $B_1$ and $B_2$ respectively.

There is a face $G$ which is adjacent to all four of $A_1$, $A_2$, $B_1$, $B_2$.  If $G$ is large, then the edge $A_1 \cap A_2$ is a good edge (as are $B_1 \cap A_1$ and $A_2 \cap B_2$).

So suppose $G$ is a pentagon.  Then there is an edge of $G$ which edge connects $B_1$ to $B_2$.  Let $H$ be the face adjacent to $G$ through this edge.  Then note that $H$ is adjacent to $B_1$ and $B_2$ and in particular, is edge connected to a large face in $X$ by the edges $C_1 \cap B_1$ and $B_2 \cap C_2$.  Therefore, if $H$ is large, then these edges are good.

So suppose $H$ is a pentagon.  Then $H$ is adjacent to $G$, $B_1$, $B_2$, $C_1$ and $C_2$.  In particular, $C_1$ and $C_2$ are adjacent pentagons.  An argument almost exactly like the one given in Case D.2.1, which showed that a pair of pentagons were outward, shows, in this case, that $C_1$ and $C_2$ are outward pentagons.

%Suppose $C_1$ is inward.  Label the edge of $C_1$ lying on $S$ by $e_1$.  Label the edge of $C_1$ adjacent to $e_1$ but not incident to $B_1$ by $f_1$.  Then there exists a pentagon $J$ which is adjacent to $C_1$ in $f_1$ ($J$ cannot be adjacent to $B_1$ as $B_1$ is inward).  Note that the edge $g_1$ of $J$ which is adjacent to $e_1$ but which is not $f_1$ must lie on $S$.  Note also $J$ is adjacent to $C_2$ in some edge $f_2$ since $f_1$ contains a vertex of $C_2$.  So $C_2$ is inward and if $e_2$ denotes the edge of $C_2$ lying on $S$, then $e_2$ and $f_2$ are adjacent.  Note that the edge $g_2$ of $J$ which is adjacent to $e_2$ but which is not $f_2$ must lie on $S$.  Thus $g_1$ and $g_2$ are nonadjacent edges in $J$ and so $J$ is neither inward nor outward.  This is a contradiction which shows that $C_1$ is outward. A similar argument applies for $C_2$.  

%So suppose $H$ is a pentagon.  Then $H$ is adjacent to $G$, $B_1$, $B_2$, $C_1$ and $C_2$.  In particular, $C_1$ and $C_2$ are outward pentagons and are adjacent as they share a vertex of $H$.  Therefore, there are six pentagons lying on $S$, four of which are outward.  

\begin{figure}
	\centering
	\includegraphics[bb=0 0 261 173]{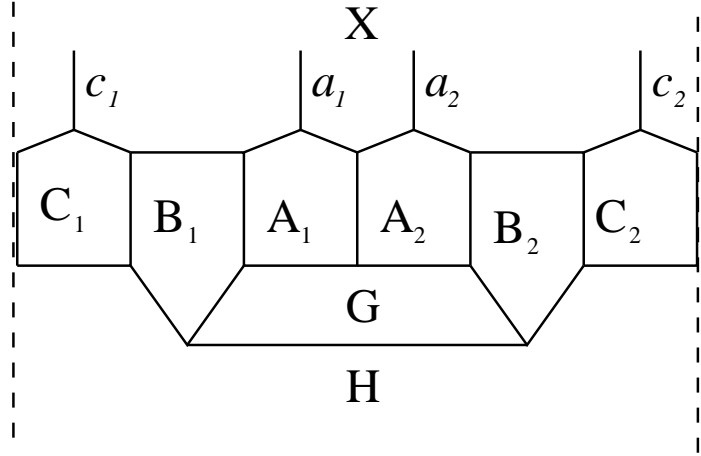}
% cased2.eps: 300dpi, width=2.21cm, height=1.46cm, bb=0 0 261 173
	\caption{Case D.2.2 of Theorem \ref{goodexist}.  The dashed lines in this figure are identified.}
	\label{fig: cased22}
\end{figure}

Therefore, there are six pentagons lying on $S$, four of which are outward.  Consider the four edges in $X$ which emanate from vertices of these outward pentagons.  The claim is that these edges form a prismatic $4$--circuit.  Label the edges emanating from $A_i$ by $a_i$ and the edges emanating from $C_i$ by $c_i$.  See Figure \ref{fig: cased22}.

Note that $a_1$ and $a_2$ cannot share an endpoint as this would imply that the face in $X$ adjacent to $A_1$ and $A_2$ is a quadrilateral.  Similarly, $c_1$ and $c_2$ do not share an endpoint.  

If $a_i$ and $c_i$ share an endpoint, then the face in $X$ of which they are both edges is a pentagon which is a contradiction since $X$ was assumed to consist only of large faces.

Suppose $a_1$ and $c_2$ share an endpoint $v$.  By trivalence, there is another edge $d$ emanating from $v$.  Note that this edge $d$ cannot be $a_2$ as $a_1$ and $a_2$ cannot share endpoints by the above argument.  Call the face in $X$ for which $a_1$ and $a_2$ are edges $M$ and the face for which $a_2$ and $c_2$ are edges $N$.  Then $M$ and $N$ are adjacent via the edge $a_2$, but also intersect in at least $v$ which is disjoint from $a_2$.  This is a contradiction.  A similar argument shows that $a_2$ and $c_1$ cannot share an endpoint.

Therefore, $a_1$, $a_2$, $c_1$, and $c_2$ form a prismatic $4$--circuit which is a contradiction.
\smallskip

{\bf Case D.2.3:} Suppose that every outward pentagon lying on $S$ is adjacent to a pair of inward pentagons lying on $S$, one on each side.  Let $A$ be such an outward pentagon, and let $B_1$ and $B_2$ be the inward pentagons adjacent to $A$.  Let $C_1$ and $C_2$ be the pentagons lying on $S$ adjacent to $B_1$ and $B_2$ respectively (but are not $A$).  

There is an edge of $A$ which edge connects $B_1$ and $B_2$.  Call the face adjacent to $A$ in this edge $G$.  Note that the edges $A \cap B_1$ and $A \cap B_2$ edge connect $G$ to a large face in $X$.  Therefore, if $G$ is large then these edges are good.  

So suppose $G$ is a pentagon.  Then $G$ is adjacent to $A$, $B_1$ and $B_2$.  Let $H_1$ and $H_2$ denote the remaining two faces adjacent to $G$ with $H_1$ adjacent to $B_1$ and $H_2$ adjacent to $B_2$.  Note that for $i = 1,2$ the edge $B_i \cap C_i$ edge connects $H_i$ to some large face in $X$.  Thus if either $H_1$ or $H_2$ is large, then $P$ has a good edge.  

So suppose both $H_1$ and $H_2$ are pentagons.  Let $D$ denote the face which is adjacent to each of $C_1$, $C_2$, $H_1$ and $H_2$.  See Figure \ref{fig: d23}.  The remainder of this case breaks down into further subcases depending on the nature of the pentagons $C_1$ and $C_2$.  
\smallskip

\begin{figure}
	\centering
	\includegraphics[bb=0 0 287 209]{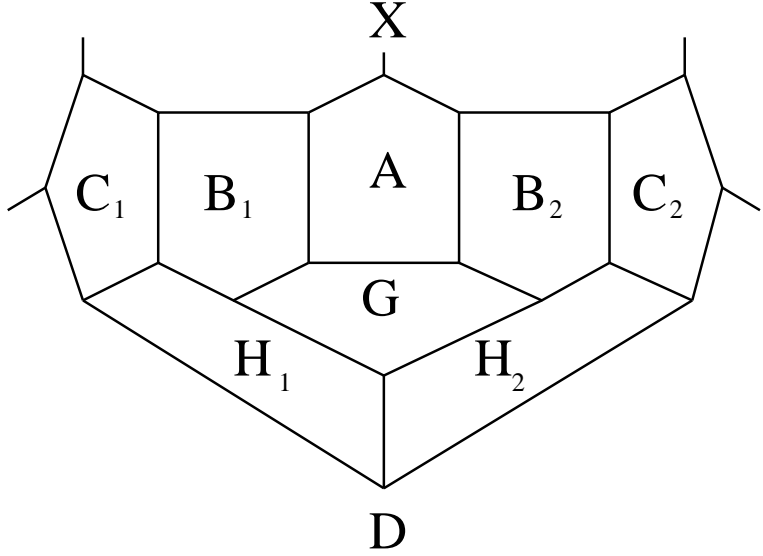}
% cased23.eps: 300dpi, width=2.43cm, height=1.77cm, bb=0 0 287 209
	\caption{Case D.2.3 of Theorem \ref{goodexist}.}
	\label{fig: d23}
\end{figure}

{\bf Case D.2.3.1:} Suppose $C_1$ and $C_2$ are both outward pentagons.  Then $C_1$ and $C_2$ are nonadjacent since, by assumption, each outward pentagon is adjacent to two inward pentagons.  In particular, the face $D$ is in fact an inward pentagon lying on $S$.  

Therefore S has exactly six pentagons lying on it, exactly three of which are outward pointing.  Consider the three edges of $X$ emanating from vertices of these outward pointing pentagons.  
If any two of these edges share an endpoint, then these are edges of a pentagon lying in $X$ which is a contradiction.  Therefore, these three edges have distinct endpoints.  It is evident then that they form a prismatic 3--circuit in $P$ which is a contradiction.
\smallskip

{\bf Case D.2.3.2:} Suppose $C_1$ and $C_2$ are both inward pentagons.  Then the faces $C_1$ and $C_2$ cannot be adjacent.  For, if they were, then the single face in $X$ which is adjacent to $C_1$, $C_2$, $B_1$, $B_2$ and $A$ would be adjacent to itself in the edge emanating from the vertex of $A$ lying on $S$.

Therefore there is at least one more pentagon $E$ adjacent to, say, $C_1$.  Suppose $E$ is outward and adjacent to $C_2$.  Let $e$ denote the edge emanating from $E$ into $X$.  Let $a$ denote the edge emanating from $A$ into $X$.  If $a = e$, then $X$ contains a pentagon which is a contradiction.  If $a$ and $e$ share an endpoint, then there is a pair of faces in $X$ which intersect in both $a$ and $e$ which is again a contradiction.  

Thus either $E$ is inward or there are more pentagons lying on $S$.  Thus the face $D$ which is adjacent to $E$, $C_1$, $C_2$, $H_1$ and $H_2$ must have at least 6 edges and so is large.  Since the edge $C_1 \cap E$ edge connects $F$ to a face in $X$, this edge is good.
\smallskip

{\bf Case D.2.3.3:} Suppose exactly one of $C_1$ and $C_2$ is outward.  Without loss of generality, suppose $C_1$ is outward and $C_2$ is inward.  Then $C_1$ and $C_2$ cannot be adjacent since if they were, the faces $C_1$, $C_2$, $H_1$ and $D$ would all share a vertex which contradicts trivalence.  So there is another inward pentagon adjacent to $C_1$.  This pentagon must be $D$ by trivalence.  The edge of $D$ which lies on $S$ edge connects $C_1$ and $C_2$ and is incident to the edge shared by $D$ and $C_2$.  This contradicts the assumption that $C_2$ is inward. 

%Then the face $D$ adjacent to each of $E$, $C_1$, $C_2$, $H_1$ and $H_2$ must be large.  Since the edge $C_2 \cap E$ edge connects $D$ to a face in $X$, this edge is good.
\smallskip

{\bf Case D.3:} Suppose all pentagons lying on $S$ are outward.  Then $D$ is a pentagonal flower $L^n$.  Recall this means that $D$ looks like an $n$--gon surrounded on all sides by a pentagon.  If $n \geq 6$, then the edges which are intersections of adjacent pentagonal petals of $L^n$ are good edges as they edge connect the $n$--gon in $D$ to a face in $X$.

So suppose that $n=5$ so that $D$ is a dodecahedral flower.  Let $G_1, \dots, G_5$ denote the ring of large faces in $X$ indexed cyclically, each of which are adjacent to a pair of pentagons in $D$.  Let $c_i$ denote the edge $G_i \cap G_{i+1}$.  Then there is a simple closed curve $c$ which lies entirely in the union of the $G_i$ which intersects each $c_i$ transversely.  This curve $c$ is a prismatic $5$--circuit.

On the side of $c$ which contains $D$, $c$ bounds a pentagon in each $G_i$.  Since each $G_i$ is large, in each $G_i$, $c$ bounds a polygon on the other side which has at least 5 edges.  Therefore, by Theorem \ref{p5cdec}, $P$ is decomposable along $c$.  The component of the decomposition which contains $D$ is a dodecahedron.

This concludes the proof of Theorem \ref{goodexist}.
\end{proof}

\begin{theorem}\label{vgcomp} If $P$ a right-angled hyperbolic polyhedron not of L\"obell type, then either it has a very good edge or it is decomposable.
\end{theorem}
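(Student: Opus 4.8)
The plan is to argue by contradiction using Theorems~\ref{goodexist} and~\ref{p5cdec}. Assume $P$ is not of L\"obell type; if $P$ is decomposable there is nothing to prove, so assume it is not. By Theorem~\ref{goodexist}, $P$ has a good edge, and I will suppose toward a contradiction that $P$ has no very good edge, so that every good edge of $P$ lies on some prismatic $5$-circuit. Since $P$ is not decomposable, Theorem~\ref{p5cdec} then forces every prismatic $5$-circuit of $P$ to have a flat.

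First I would pin down the local shape of a flat. If $c$ is a prismatic $5$-circuit meeting faces $F_1,\dots,F_5$ in edges $g_1,\dots,g_5$ cyclically, with $g_j,g_{j+1}\subset F_j$, then a flat at $F_j$ is a component of $F_j\setminus c$ with at most four sides. Because $c$ is prismatic, $g_j$ and $g_{j+1}$ share no endpoint and so are not adjacent edges of $F_j$; hence that component cannot be a triangle, and every flat is a quadrilateral bounded by a sub-arc of $g_j$, a sub-arc of $g_{j+1}$, a single edge $h_j$ of $F_j$ adjacent to both $g_j$ and $g_{j+1}$, and the chord of $c$ in $F_j$. A trivalence computation at the two endpoints of $h_j$, of the type used in Lemmas~\ref{verteximpliesadjacent}--\ref{fouradj}, then shows that $h_j$ joins the faces $F_{j-1}$ and $F_{j+1}$ — the two faces of $c$ flanking $F_j$ in the circuit — and that $h_j$ lies on the side of $c$ carrying the flat.

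The main step is a descent on complexity. Define the \emph{weight} of a prismatic $5$-circuit to be the number of faces of $P$ lying strictly inside the smaller of the two disks that the circuit cuts from $\partial P$, and among all good edges lying on prismatic $5$-circuits choose one, $e$, on a circuit $c$ of minimal weight. Let $F_j$ carry a flat of $c$, with flat edge $h$ joining $F_{j-1}$ and $F_{j+1}$. I would first show that $F_{j-1}$ and $F_{j+1}$ are distinct and nonadjacent: otherwise, rerouting $c$ across the edge they would share — replacing the portion of $c$ running through $F_{j-1}$, $F_j$, $F_{j+1}$ by a single chord — gives a prismatic $3$- or $4$-circuit of $P$, impossible by Corollary~\ref{rightclassified}. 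Next I would show that $F_{j-1}$ and $F_{j+1}$ are both large: if one of them, say $F_{j-1}$, were a pentagon, then $c$ has a second flat at $F_{j-1}$, and tracking the forced configuration of pentagons around it, by an argument in the spirit of Cases~D.1--D.3 in the proof of Theorem~\ref{goodexist}, either produces a forbidden short prismatic circuit or forces two pentagonal flowers to be glued to each other, i.e.\ forces $P$ to be a L\"obell polyhedron. Hence $h$ is a good edge, so by assumption $h$ lies on a prismatic $5$-circuit $c'$; and since $h$ is uncrossed by $c$ and lies ``one face deep'' on the small side of $c$, behind the chord in $F_j$ and across $h$ from $F_j$, the circuit $c'$ can be chosen with its small side strictly inside that of $c$, so that $c'$ has strictly smaller weight. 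This contradicts minimality and proves the theorem.

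I expect the step controlling a pentagonal face adjacent to the flat to be the main obstacle: it requires re-running, nearly verbatim, several of the subcases from the proof of Theorem~\ref{goodexist}, and one must also check carefully that the circuit $c'$ through $h$ can really be taken to reduce the weight rather than just shift the flat to a neighbouring face. A cleaner alternative worth attempting is to ``walk the flat'': each rerouting of $c$ across its flat moves the flat onto the adjacent face, and since $\partial P$ is finite this walk must close up into an annular chain of faces whose core is a prismatic circuit — if every face of the chain is a pentagon the annulus exhibits $P$ as a L\"obell polyhedron, and otherwise a large face of the chain yields a very good edge directly.
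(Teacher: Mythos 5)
Your framing matches the paper's (if $P$ is not decomposable it has a good edge by Theorem \ref{goodexist}, every prismatic $5$--circuit has a flat by Theorem \ref{p5cdec}, and the goal is to find a flat that edge connects two large faces), but the two steps that carry the whole weight of the argument are gaps. First, the claim that for your chosen circuit the faces $F_{j-1}$ and $F_{j+1}$ flanking the flat are both large is not a routine re-run of the cases of Theorem \ref{goodexist}. The paper's proof of this theorem spends most of its length on exactly this point: it needs a string of new lemmas (no adjacent flats on one side, no flat--roof--flat pattern, Lemma \ref{pentflat}) and a six-way case analysis on the number of large faces met by the circuit, including its own separate induction in Case D.2 and a decomposability outcome in Case F. In several of those configurations the correct conclusion is not that your flat's flanking faces are large, but that a \emph{different} edge (the flat of one of the pentagons on the circuit) is the very good one; so "tracking the forced configuration of pentagons" cannot be waved through.

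Second, and more fundamentally, the descent is quantifier-broken. The hypothesis "no very good edge" only tells you that the good flat $h$ lies on \emph{some} prismatic $5$--circuit $c'$; you do not get to choose $c'$, and there is no reason it lies inside the small disk of $c$ or has smaller weight --- any $5$--circuit through $h$ crosses $h$ transversely, hence passes through faces on both sides of $h$, and can run over the whole of $\partial P$. So minimality of the weight of $c$ yields no contradiction. The paper closes exactly this hole with Lemma \ref{gflatvg}: a flat of a prismatic $5$--circuit which is a good edge lies on \emph{no} prismatic $5$--circuit whatsoever, proved by showing the two circuits would have to share a second face and then deriving contradictions (adjacent flats, a forced pentagon where a large face was assumed, prismatic $3$-- or $4$--circuits). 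Without that lemma, or a genuinely working substitute for the descent, your argument does not close; the "walk the flat" alternative is likewise unverified (the rerouted curve need not be prismatic, and the walk need not close into the annular chain you describe).
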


\begin{proof} Suppose $P$ is not decomposable.  By Theorem \ref{goodexist}, $P$ has a good edge.  The only way for this edge to fail to be a very good edge is if it is a member of some prismatic $5$--circuit in $P$.  Some facts about prismatic $5$--circuits will need to be collected.

Suppose $c = \{e_1, \dots, e_5\}$ is a prismatic $5$--circuit with indices taken cyclicly.   Recall that this means that there is a simple closed curve $c$ in $\partial P$ which intersects the $1$--skeleton of $P$ exactly in $e_i$ and does so transversely, and furthermore all vertices of these edges are distinct. Then $e_i$ and $e_{i+1}$ are edges contained in some face which will be denoted $F_i$.  This ring of faces $\{F_i\}$ of $P$ is called the set of faces the prismatic $5$--circuit $c$ intersects.  See Figure \ref{fig: p5c}.

\begin{figure}
	\centering
	\includegraphics[bb=0 0 340 104]{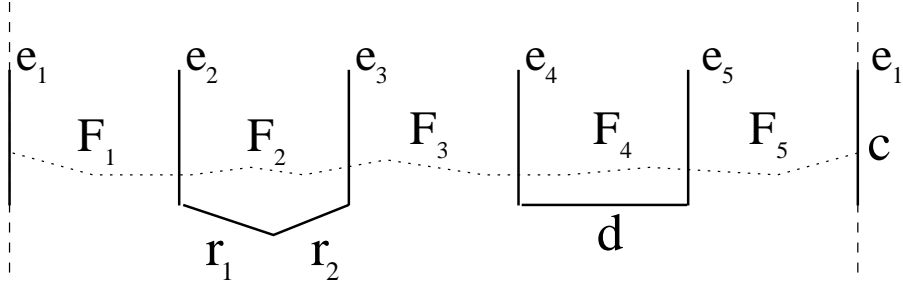}
% p5c.eps: 300dpi, width=2.88cm, height=0.88cm, bb=0 0 340 104
	\caption{A prismatic 5--circuit and the faces it intersects. As usual, identify the dashed lines.  The edge $d$ is a flat of the prismatic circuit $c$, while edges $r_1$ and $r_2$ form a roof of $c$.}
	\label{fig: p5c}
\end{figure}

\begin{lemma} $F_i \neq F_j$ for $i\neq j$.
\end{lemma}

\begin{proof} Suppose for contradiction that $F_i = F_j$ and without loss of generality suppose $i=1$.  By transversality when $c$ intersects an edge of $F_1$, it must immediately leave $F_1$, therefore $j \neq 2,5$.  Suppose then $j=3$ or $j=4$.  By turning the labelling around only the case when $j=3$ needs to be checked.  In this case, $F_2$ intersects $F_1 = F_3$ in edges $e_1$ and $e_2$ which are distinct.  This is a contradiction.
\end{proof}

Call a prismatic $5$--circuit {\it trivial} if every edge has exactly an endpoint belonging to a fixed pentagonal face and {\it nontrivial} if otherwise.  Note that none of the edges belonging to a trivial prismatic $5$--circuit can possibly be good.  

Call an edge $d$ of $F_i$ a {\it flat} of the prismatic 5--circuit $c$ if $d$ and $c \cap F_i$ are opposite edges of some combinatorial quadrilateral in $F_i$.  For example, the edge $d$ in Figure \ref{fig: p5c} is a flat of the prismatic circuit $c$. 

\begin{lemma} A nontrivial prismatic $5$--circuit $c$ in a right-angled hyperbolic polyhedron cannot have five adjacent flats on the same side of $c$.
\end{lemma}
\begin{proof} By trivalence, there must be a single face adjacent to all these flats.  Evidently, this face must be a pentagon and the prismatic $5$--circuit must be trivial.
\end{proof}

\begin{lemma} A nontrivial prismatic $5$--circuit $c$ in a right-angled hyperbolic polyhedron cannot have four adjacent flats on the same side of $c$.  
\end{lemma}
\begin{proof} Without loss of generality, suppose for a contradiction that $F_1, \dots F_4$ have flat edges on the same side of $c$.  There is a face $G$ which is adjacent to each of $F_1, \dots F_4$.  Note that $F_5$ cannot have a flat on the same side of $c$ as the other flats by the nontriviality of $c$.  Then $G$ intersects $F_5$ in at least two edges, which contradicts the combinatorics of a polyhedron.
\end{proof}

\begin{lemma} A nontrivial prismatic $5$--circuit $c$ in a right-angled hyperbolic polyhedron cannot have three adjacent flats on the same side of $c$.  
\end{lemma}
\begin{proof} Without loss of generality, suppose for a contradiction that $F_1, F_2, F_3$ have flat edges on the same side of $c$.  There is a face $G$ which is adjacent to each of these.  By trivalence, $G$ also intersects each of $F_4$ and $F_5$.  Consider the sequence of three edges ${d_i}$ given by
$$d_1 = G \cap F_5, d_2 = G \cap F_4, d_3 = F_4 \cap F_5.$$
Note that neither $d_1$ nor $d_2$ can share an endpoint with $d_3$ since this would imply four adjacent flats.  

Suppose $d_1$ and $d_2$ share an endpoint so that $G$ is a pentagon.  Let $q$ be the edge which shares an endpoint with $d_1$ and $d_2$ but is not $d_1$ nor $d_2$.  By the above, $q$ cannot be $d_3$ either.  However, $q$ is an edge of both $F_4$ and $F_5$ which implies these faces intersect in at least two edges which is a contradiction.

Therefore $d_1$, $d_2$, and $d_3$ form a prismatic $3$--circuit which is a contradiction.
\end{proof}

\begin{lemma}\label{adjflat} A nontrivial prismatic $5$--circuit $c$ in a right-angled hyperbolic polyhedron cannot have adjacent flats on the same side of $c$.  
\end{lemma}

\begin{proof} Without loss of generality, suppose $F_1$ and $F_2$ have flat edges of the same side of $c$.  Then there is a face $G$ which is adjacent to both $F_1$ and $F_2$.  By trivalence, $G$ is also adjacent to $F_5$ and $F_3$.  Consider the sequence of four edges ${d_i}$:
$$d_1 = F_5 \cap G, d_2 = G \cap F_3, d_3 = F_3 \cap F_4, d_4 = F_4 \cap F_5.$$
See Figure \ref{fig: adjflat}.

Note that $d_3$ and $d_4$ are contained in the prismatic $5$--circuit $c$ and therefore must have distinct endpoints.  Note also that $d_1$ and $d_2$ have distinct endpoints since the face $G$ must have at least $5$ vertices.  

\begin{figure}
	\centering
	\includegraphics[bb=0 0 340 104]{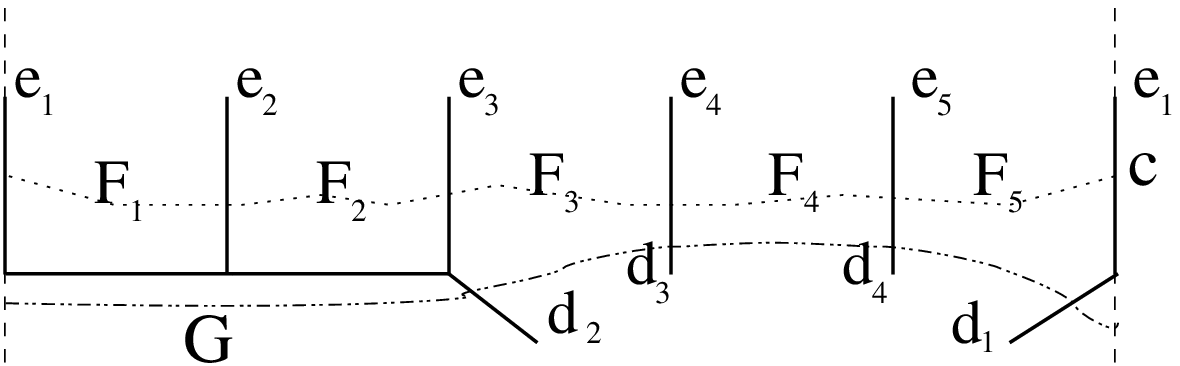}
% adjflat.eps: 300dpi, width=2.88cm, height=0.88cm, bb=0 0 340 104
	\caption{Lemma \ref{adjflat}.  }
	\label{fig: adjflat}
\end{figure}

Suppose $d_1$ and $d_3$ share an endpoint.  Since $d_3$ is an edge of $F_3$ and $F_4$, by trivalence and the property of polyhedra that every edge belong to exactly two faces, $d_1$ must also be an edge of $F_3$ or $F_4$.  But $d_1$ is an edge of the faces $G$ and $F_5$.  The latter, by the above lemma, cannot be $F_3$ or $F_4$.  Evidently, $G$ is not $F_3$ and cannot be $F_4$ since this would imply that $d_2 = d_3 = e_3$ and therefore $e_2$ and $e_3$ share an endpoint.  This is a contradiction and so $d_1$ and $d_3$ are disjoint.  A similar argument by a relabelling of edges shows $d_2$ and $d_4$ share no endpoints.

Suppose $d_1$ and $d_4$ share an endpoint.  Then $d_1$ is a flat of the face $F_5$ and so there are three adjacent flats which, by the above lemma, is a contradiction.  A similar argument shows $d_2$ and $d_3$ cannot share an endpoint.

Therefore, $d_1, d_2, d_3, d_4$ all have distinct endpoints and therefore form a prismatic $4$--circuit in $P$, which is a contradiction.
\end{proof}

For a prismatic $k$--circuit $c$, define a {\it roof} of $c$ to be a pair of edges $r_1$ and $r_2$ in a particular face $F$ which $c$ intersects such that the the three segments $r_1, r_2$, and $F \cap c$ together bound a combinatorial pentagon in $F$.  For example, the edges $r_1$ and $r_2$ in Figure \ref{fig: p5c} form a roof of the prismatic circuit $c$.

\begin{lemma}\label{frf} A prismatic 5--circuit $c$ in a right-angled hyperbolic polyhedron cannot have a pair of flats on the same side of $c$ separated by a single roof.
\end{lemma}
\begin{proof} For the purposes of establishing a contradiction, and without loss of generality, suppose $F_1$ and $F_3$ contain flat edges on the same side of $c$, and $F_2$ contains a roof of $c$ separating these flats.  Then adjacent to each of $F_1$ and $F_3$ along their flats are faces $G_1$ and $G_3$ which are themselves both adjacent to $F_2$ by Lemma \ref{verteximpliesadjacent} as they evidently share a vertex with it.  Note also that $G_1$ is adjacent to $F_5$ and $G_3$  is adjacent to $F_4$ for similar reasons.  

Let $g_1 = G_1 \cap F_2$ and $g_3 = G_3 \cap F_2$.  These edges $g_1$ and $g_3$ evidently form the roof of $c$ contained in $F_2$, and therefore are adjacent.  Therefore, $G_1$ and $G_3$ are themselves adjacent, again, by Lemma \ref{verteximpliesadjacent}. 

Consider then a curve which passes through the edges $$d_1 = G_1 \cap F_5, d_2 = G_1 \cap G_3, d_3 = G_3 \cap F_4, d_4 = F_4 \cap F_5.$$  If $d_1$ shares an endpoint with $d_4$, then $d_1$ is a flat of the face $F_5$.  This implies $c$ has a pair of adjacent flats on the same side which is a contradiction to Lemma \ref{adjflat}.  A similar argument shows $d_3$ and $d_4$ do not share an endpoint.  

Furthermore, $d_1$ and $d_2$ cannot share endpoints as the face $G_1$ must have at least five edges.  Similarly, $d_2$ and $d_3$ cannot share endpoints.  

If $d_1$ and $d_3$ share an endpoint, then $F_4$ and $F_5$ are adjacent in some edge other than $e_5 = d_4$.  This is a contradiction.  Similarly, if $d_2$ and $d_4$ share an endpoint, then $G_1$ and $F_5$ intersect in a vertex not in $d_1$ as $d_2$ and $d_1$ were shown to be disjoint.  This is also a contradiction.

Therefore, $d_1, d_2, d_3, d_4$ all have distinct endpoints and so form a prismatic 4--circuit which is a contradiction.  This proves Lemma \ref{frf}.
\end{proof}

\begin{lemma}\label{gflatvg} If a flat $e$ of a prismatic $5$--circuit $c$ is a good edge, then it cannot be a part of a prismatic $5$--circuit and so is very good.
\end{lemma}
\begin{proof} For the purposes of establishing a contradiction, suppose that $e$ is an edge of a prismatic $5$--circuit $d$.  Label the edges of $c$ by $\{c_i\}$, the edges of $d$ by $\{d_i\}$, the faces that $c$ intersects $\{F_i\}$ and the faces that $d$ intersects $\{G_i\}$.  Without loss of generality, suppose $e$ is an edge of $F_1$, suppose $G_1 = F_1$ and suppose $G_2$ is the face adjacent to $G_1$ in $e$.  

Note that since $e$ is a flat of $c$, it edge connects $F_5$ and $F_2$.  Since $e$ is a good edge, $F_5$ and $F_2$ are large faces. 

\begin{sublemma} Either $G_3$ or $G_4$ is either $F_3$ or $F_4$.
\end{sublemma}

\begin{proof} Consider the simple closed curves $c$ and $d$.  These curves intersect in the face $F_1 = G_1$ since the edge $e$ is a flat of $c$, which means $d$ must intersect a pair of edges which lie on opposite sides of $c$, one of which is $e$.  Note that an isotopy can be performed so that all intersections between $c$ and $d$ occur in the interiors of faces of $P$ and within any face, $c$ and $d$ intersect at most once.  Since $\partial P$ is topologically $S^2$, these curves must intersect at least twice.  This implies that some $G_i$ is equal to some $F_j$ for $i,j \neq 1$.  

$G_5$ cannot be $F_2$ or $F_5$ since $G_1 \cap F_2 = c_1$ and $G_1 \cap F_5 = c_5$ both share an endpoint with $e$.  Also $G_5$ cannot be $F_3$ or $F_4$.  For example, if $G_5$ is $F_3$, then $F_1 = G_1$ is adjacent to $F_3 = G_5$ in an edge which either shares a vertex with $c_2$ or does not.  In the former case, $F_1$, $F_2$ and $F_3$ for pairwise adjacent faces and so must share a vertex which contradicts $c$ being a prismatic 5--circuit.  In the latter case, $c_1$, $c_2$ and $F_1 \cap F_3$ form a prismatic 3--circuit which is also a contradiction.  A similar proof shows $G_5$ cannot be $F_4$.  Therefore, $G_5$ is not $F_i$ for any $i$.  

As $e = G_2 \cap F_1$ is a flat of $c$, $G_2$ cannot be $F_2$ or $F_5$.  $G_2$ also cannot be $F_3$ or $F_4$.  For if, for example, $G_2$ was $F_3$, then $F_1$, $F_2$, and $F_3$ would be pairwise adjacent faces and so must share a vertex which contradicts $c$ being a prismatic 5--circuit.  Therefore, $G_5$ is not $F_i$ for any $i$.

Note also $G_3$ cannot be $F_5$ or $F_2$ as this would imply that $G_1$, $G_2$ and $G_3$ are pairwise adjacent faces and thus must share a vertex which contradicts $d$ being a prismatic 5--circuit.  Similarly $G_4$ cannot be $F_5$ or $F_2$.

Therefore, $G_3$ or $G_4$ is $F_3$ or $F_4$.  This concludes the proof of the sublemma.
\end{proof}

Suppose first that $G_3$ is $F_3$.  Note that as $G_2$ and $F_2$ are distinct faces which share a vertex, namely $e \cap c_1$, by Lemma \ref{verteximpliesadjacent} $G_2$ and $F_2$ are adjacent.  Furthermore, by definition, $G_3 = F_3$ is adjacent to $F_2$ and $G_2$, and so these three faces all share a vertex.  Therefore the edge $G_2 \cap F_2$ has endpoints $e \cap c_1$ and an endpoint of $c_2$ and is thus a flat of $c$.  But this implies $c$ has a two adjacent flats on the same side which contradicts Lemma \ref{adjflat}.

A similar argument shows $G_3$ cannot be $F_4$.

Suppose next that $G_4$ is $F_3$.  Then $G_5$ is adjacent to both $G_1 = F_1$ and $G_4 = F_3$.   Furthermore $F_2$ is adjacent to $F_1$ and $F_3$, and, as the proof of the above sublemma shows, $G_5$ is not $F_2$.  Therefore, by Lemma \ref{fouradj}, $G_5$ and $F_2$ are adjacent.

Now consider $G_3$.  This face is adjacent to $G_2$ and $G_4 = F_3$.  Furthermore $F_2$ is also adjacent to $G_2$ and $G_4$, and, as the proof of the above sublemma shows, $G_3$ is not $F_2$.  Therefore, again by Lemma \ref{fouradj}, $G_3$ and $F_2$ are adjacent.  

Therefore, for each $i$, $G_i$, $F_2$, and $G_{i+1}$ and pairwise adjacent and thus share a vertex.  This evidently implies $F_2$ must be a pentagon.  This is a contradiction as $e$ was assumed to be a good edge.  Therefore $G_4$ cannot be $F_3$.  

A similar argument shows $G_4$ cannot be $F_4$.

Therefore, the good flat $e$ must be a very good edge.  This concludes the proof of Lemma \ref{gflatvg}.  
\end{proof}

An obvious result which will be used often is the following:

\begin{lemma}\label{pentflat} Suppose $F_i$ is a pentagon.  Then $c$ has a flat which is an edge of this pentagon.  Furthermore, $c$ has a roof formed by a pair of adjacent edges of this pentagon.  
\end{lemma}

\begin{proof} The curve $c$ intersects two nonadjacent edges of $F_i$.  The remaining three edges must be distributed so that exactly two are on one side of $c$ and one on the other side.  The former two edges are a roof and the latter edge is a flat.  
\end{proof}

At last, returning to the proof of Theorem \ref{vgcomp}, suppose that $P$ has a good edge $e$ which is a member of a prismatic $5$--circuit $c$.  Then $c$ cannot be a trivial prismatic circuit.  

The proof of Theorem \ref{vgcomp} now breaks up into six cases with possibly some subcases depending on the number of large faces the prismatic 5--circuit $c$ intersects.  All but one of these cases either produces a contradiction to the combinatorial facts about prismatic 5--circuits collected above, or produces a very good edge via Lemma \ref{gflatvg}.  The remaining case implies that $P$ is decomposable.  

As usual, let $F_i$ be the face that contains both the edges $c_i$ and $c_{i+1}$.
\smallskip

{\bf Case A:}  Supppose no $F_i$ is large.  Then, by Lemma \ref{pentflat} $c$ has five flats, one for each pentagon $F_i$.  There is no possible arrangement of these pentagons for which there are no adjacent pairs of flats on the same side of $c$.  This is a contradiction to Lemma \ref{adjflat}.
\smallskip

{\bf Case B:}  Suppose exactly one of $F_i$ is large.  Then by Lemma \ref{pentflat}, each of these pentagons have an edge which is a flat of $c$ on one side, and a roof of $c$ on the other.  Because there cannot be adjacent flats on the same side of $c$, there is only way for the pentagons to be arranged up to isomorphism.  That is with flats ``alternating'' on either side of $c$.  Then on either side of $c$, there is a flat, then roof, then flat, then roof coming from the edges of the string of four pentagons.  This contradicts Lemma \ref{frf}.
\smallskip

{\bf Case C:}  Suppose exactly two of the $F_i$ are large.  Then either the three pentagons are arranged in a row, or they are not.  If they are not, then there is a pentagon whose neighbors are the large faces in $\{F_i\}$.  These faces are edge connected by the flat of the pentagon described by Lemma \ref{pentflat} and therefore is a very good edge by \ref{gflatvg}.

Suppose then that the three pentagons are arranged in a row.  Then since each pentagon has a flat on one side of $c$ and a roof on the other by Lemma \ref{pentflat}, either there are two adjacent flats on the same side of $c$ which contradicts Lemma \ref{adjflat}, or there is a flat, then roof, then flat on the same side of $c$ which contradicts Lemma \ref{frf}.
\smallskip

{\bf Case D:}  Suppose exactly three of the $F_i$ are large.  Then either the two pentagons are adjacent or they are not.  Each case will be treated separately:
\smallskip

{\bf Case D.1:} If they are not adjacent, then the flat edge of a pentagon described by Lemma \ref{pentflat} edge connects two large faces and so, by Lemma \ref{gflatvg}, is very good.  
\smallskip

{\bf Case D.2:} Suppose the two pentagons are adjacent.  Without loss of generality, suppose $F_2$ and $F_3$ are the pentagons.  Then $F_1$ and $F_4$ are large faces.  Make a choice of side of $c$ by choosing the side which contains the flat of the pentagon $F_2$.   Let $n_c$ denote the number of edges of $F_1$ and $F_4$ contained entirely in this chosen side of $c$.  Note that this number $n_c$ must be at least 2 as $c$ is prismatic.

Suppose $n_c = 2,3$.  Then one of $F_1$ or $F_4$ contains a flat of $c$ on the chosen side.  If $F_1$ is the culprit, then $c$ has a pair of adjacent flats on the chosen side which contradicts Lemma \ref{adjflat}.  If $F_4$ is the culprit, then since $F_3$ must contain a roof of $c$ on the chosen side, there is a contradiction to Lemma \ref{frf}.  Thus $n_c \geq 4$ and, in particular, the contribution from each of $F_1$ and $F_4$ to $n_c$ must be at least two edges.

Label the face of $P$ adjacent to $F_2$ through the edge which is a flat of $c$ by $G$.  Then $G$ is adjacent to both $F_1$ and $F_3$.  Note also that $G$ is edge connected to the face $F_4$ by an edge of $F_3$.  Call this edge $g$.  Consider the curve $d$ which passes through the edges: $$d_1 = F_1 \cap F_5, d_2 = F_1 \cap G, d_3 = G \cap F_3, d_4 = F_3 \cap F_4, d_5 = F_4 \cap F_5.$$

This curve $d$ forms a prismatic 5--circuit.  Furthermore, the edge $g$ is a flat of $d$.  Thus, if $G$ is a large face, then $g$ is a good edge which is also a flat of a prismatic 5--circuit and so is very good by Lemma \ref{gflatvg}.  

So suppose $G$ is a pentagon.  Then the prismatic 5--circuit $d$ satisfies the conditions of Case D.2 and furthermore, by choosing the side of $d$ which contains the flat $g$ one has $n_d = n_c - 1$ with the contribution of $F_1$ to $n_d$ being one less than that to $n_c$.  See Figure \ref{fig: case8.3d}.

\begin{figure}
	\centering
	\includegraphics[bb=0 0 337 125]{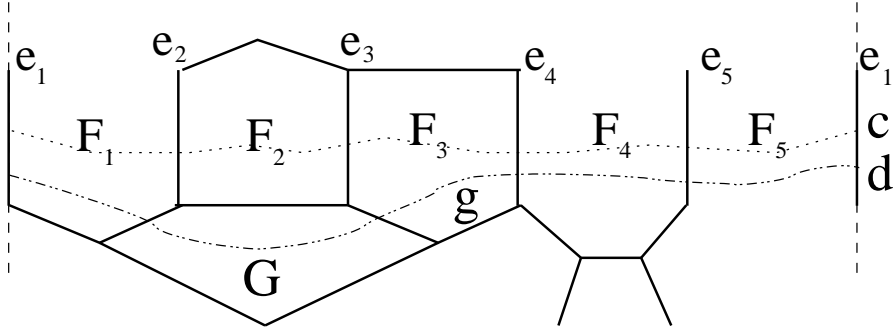}
% case8.3d.eps: 72dpi, width=11.89cm, height=4.41cm, bb=0 0 337 125
	\caption{Case D.2 of Theorem \ref{vgcomp}.  The prismatic 5--circuit $c$ has $n_c = 5$.  In this figure, as $G$ is a pentagon, the prismatic circuit $d$ contradicts Lemma \ref{frf}.}
	\label{fig: case8.3d}
\end{figure}

Now an argument by induction on these numbers $n$ can proceed.  At each step, either a very good edge is exhibited, or a new prismatic 5--circuit is produced with smaller $n$.  If $n$ gets too small, that is $n = 3$, or the contribution from the faces $F_1$ or $F_4$ to $n$ is too small, that is a single edge, then the above argument gives a contradiction.  

\smallskip

{\bf Case E:} Suppose exactly four of the $F_i$ are large.  Without loss of generality, suppose $F_1$ is a pentagon.  Then by Lemma \ref{pentflat}, $F_1$ has an edge $f$ which is a flat of $c$.  As it edge connects $F_5$ with $F_2$ which are large faces, this edge $f$ is good.  By Lemma \ref{gflatvg}, this edge is very good.
\smallskip

{\bf Case F:} Suppose all $F_i$ are large.  Then either $c$ has a flat or it doesn't.  If $c$ has a flat, then this flat evidently edge connects two large faces $F_j$ and $F_{j+2}$, and thus by Lemma \ref{gflatvg}, is very good.  If, on the other hand, $c$ does not have a flat, then $P$ is decomposable by Theorem \ref{p5cdec}, which is a contradiction.

This concludes the proof of Theorem \ref{vgcomp}.
\end{proof}

\section{Geometric Realization of Edge Surgery}
Edge surgery as defined is a purely combinatorial operation.  It does, however, have a geometric realization as a cone manifold deformation.  Intuitively, an edge surgery looks like ``unbending'' along the edge until the supporting planes of the faces adjacent to the edge coincide.  That is, the dihedral angle of the edge being surgered will increase from $\frac{\pi}{2}$ to $\pi$ while all other edges retain their right-angles.

This section will be devoted to showing that this deformation is a path through the space of hyperbolic polyhedra which begins and ends at right-angled polyhedra.  By use of the Schl\"afli differential formula, it will be shown that the volume of the initial right-angled polyhedron is greater than that of the final right-angled polyhedron.

To prove this deformation is through hyperbolic polyhedra, a generalization of Andreev's Theorem that accounts for obtuse angled hyperbolic polyhedra will be required.  I Rivin in his thesis accomplished such a generalization.  This result is communicated by a paper of Rivin and Hodgson \cite{RH}.  

Let $P \subset \HH^3$ be a hyperbolic polyhedron, not necessarily non-obtuse angled.  Define the {\it spherical polar} $v^*$ of a vertex $v$ of $P$ by associating to each vertex $v$ the set of outward unit normal vectors to planes which are incident to $v$ but are disjoint from the interior of $P$ (the {\it support planes} to $P$ at $v$).  Then $v^*$ is a spherical polygon whose edge lengths are the exterior dihedral angle measures of edges incident to $v$ in $P$.  It follows that the interior angle measures of $v^*$ are the complementary angle measures of the face angles of $P$ at $v$. 

Define the {\it spherical polar} $P^*$ of $P$ to be the piecewise spherical metric space constructed by gluing spherical polygons $v^*$ and $w^*$ associated to a pair of vertices $v$ and $w$ of $P$ exactly when $v$ and $w$ are connected by an edge.  Note that this metric space $P^*$ is topologically $\bS^2$ and is combinatorially dual to the cell structure of $P$ (or, more properly speaking, the boundary of $P$).  However, $P^*$ will often not be isometric to $\bS^2$.

To illustrate this point by way of example, suppose $P$ is a right-angled polyhedron.  Then the spherical polar of each vertex of $P$ is a right-angled triangle in $\bS^2$.  Since every face of $P$ has at least 5 edges, around every vertex in $P^*$ are 5 or more of these right-angled triangles, and in particular, the sum of the angles around such a vertex is greater than $2\pi$.  Thus the vertices of $P^*$ are singularities in the spherical metric.  They represent accumulations of negative curvature.

Such singular points where the curvature is not 1 are called {\it cone singularities}.  The {\it cone angle} of such a singularity is the sum of the angles around the singularity, or more intrinsically, the length of the link of the singularity viewed as a piecewise circular metric space.  A cone angle of $2\pi$ corresponds to a nonsingular point.

I Rivin's generalization of Andreev's Theorem \cite{RH} characterizes those metric spaces homeomorphic to $\bS^2$ which arise as polars of hyperbolic polyhedra:

\begin{theorem} \label{Rivin} A metric space $Q$ homeomorphic to $\bS^2$ is the spherical polar of a compact convex polyhedron $P$ in $\HH^3$ if and only if each of the following conditions hold:

\setcounter{enumi}{0}
\begin{enumerate}
\item $Q$ is piecewise spherical with constant curvature 1 away from a finite collection of cone points $c_i$.

\item The cone angles at the points $c_i$ are greater than $2\pi$.

\item The lengths of closed geodesics are all greater than $2\pi$.
\end{enumerate}
The metric space $Q$ determines the polyhedron $P$ completely up to isometry.

\end{theorem}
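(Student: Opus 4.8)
The plan is to prove the two directions separately by the classical \emph{method of continuity} (in the spirit of Alexandrov and Pogorelov), extracting the final uniqueness clause from an infinitesimal rigidity statement that upgrades, along the continuity argument, to global injectivity of the polar map.

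\emph{Necessity.} Suppose $P\subset\HH^3$ is a compact convex polyhedron and form $P^*$ as described. Condition (1) is immediate: each $v^*$ is a genuine convex spherical polygon and adjacent ones are glued along isometric geodesic edges, so $P^*$ is piecewise spherical with cone points confined to the vertices dual to faces of $P$. For (2), if $F$ is a face of $P$, a hyperbolic $k$--gon with interior angles $\alpha_1,\dots,\alpha_k$, then the cone angle of $P^*$ at the vertex dual to $F$ is $\sum_{i=1}^k(\pi-\alpha_i)=k\pi-\sum_{i=1}^k\alpha_i$, which exceeds $2\pi$ because $\sum_i\alpha_i<(k-2)\pi$ for every hyperbolic polygon. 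Condition (3) is the substantive point and is the polar analogue of the prismatic--circuit conditions in Andreev's Theorem: a closed geodesic of $P^*$ avoiding the cone points traverses a cyclic sequence of the cells $v^*$, hence corresponds to a closed circuit of edges of $P$, and its length is the sum of the exterior dihedral angles along that circuit; convexity together with compactness of $P$ forces this sum to be strictly greater than $2\pi$ (dropping compactness would permit length exactly $2\pi$).

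\emph{Sufficiency and uniqueness.} Given $Q$ satisfying (1)--(3), triangulate it with vertices at the cone points; the dual of this triangulation is a combinatorial polyhedron $\mathcal C$. Let $\mathcal P(\mathcal C)$ be the space of compact convex hyperbolic polyhedra realizing $\mathcal C$, modulo isometry of $\HH^3$, and let $\mathcal M(\mathcal C)$ be the space of piecewise--spherical metrics on $S^2$ with the dual combinatorics that satisfy (1)--(3); parametrizing by the exterior dihedral angles (equivalently, by the spherical edge lengths of the polar) exhibits both as real--analytic manifolds of the same dimension, with $\mathcal M(\mathcal C)$ cut out by the open inequalities of (2)--(3) and the spherical--polygon inequalities. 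The \emph{polar map} $\Phi\colon\mathcal P(\mathcal C)\to\mathcal M(\mathcal C)$, $P\mapsto P^*$, is analytic. One then checks: (i) $d\Phi$ is everywhere injective, so $\Phi$ is a local diffeomorphism (this is Rivin's infinitesimal rigidity theorem); (ii) $\Phi$ is proper, because conditions (2) and (3) on a limiting metric $Q_\infty$ are exactly what forbid the degenerations of a sequence $P_n$ with $P_n^*\to Q_\infty$ (a vertex of $P_n$ escaping to $\partial\HH^3$ drives the corresponding cone angle down to $2\pi$; a collapse of $P_n$ produces a short closed geodesic in the limit); (iii) $\mathcal M(\mathcal C)$ is connected. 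From (i)--(iii), $\Phi$ is a covering of a connected space and, being injective, a diffeomorphism onto $\mathcal M(\mathcal C)$. This proves existence of $P$ with $P^*$ isometric to $Q$ and its uniqueness up to isometry within the type $\mathcal C$; ranging over all $\mathcal C$ gives the full statement, since any polyhedron with polar isometric to $Q$ must realize $\mathcal C$ by the combinatorial duality of the polar operation.

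The main obstacle is step (i): infinitesimal rigidity of convex hyperbolic polyhedra phrased in terms of polar data. It is proved in \cite{RH} by transporting the infinitesimal deformation problem, via a \emph{Pogorelov map}, to the infinitesimal rigidity of convex Euclidean polyhedra (the Cauchy--Dehn--Alexandrov theorem), followed by a sign analysis showing no nontrivial flex survives. The second real difficulty is the properness in step (ii), which requires enumerating the possible degenerations of a sequence of compact convex hyperbolic polyhedra of fixed combinatorial type and verifying in each case that the limiting polar metric violates (2) or (3). Since Theorem~\ref{Rivin} is exactly the Rivin--Hodgson theorem, in the present article it is simply quoted; the outline above only indicates the architecture of their proof.
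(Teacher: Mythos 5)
The paper offers no proof of this statement: Theorem \ref{Rivin} is imported verbatim from Rivin and Hodgson \cite{RH}, and the citation in your final sentence is exactly what the paper does, so to that extent your proposal matches the paper. Two cautions, though, if your outline of the Rivin--Hodgson argument is meant as more than a pointer. First, your necessity argument for condition (3) is not correct as stated: a closed geodesic of $P^*$ is a geodesic of the singular piecewise spherical metric and need not run along edges of the dual cell structure, so its length is not in general a sum of exterior dihedral angles over a circuit of edges of $P$; in \cite{RH} the inequality ``length $>2\pi$'' is a substantive separate argument (it is the genuinely hyperbolic input), not a one-line consequence of convexity and compactness. Second, a metric space $Q$ satisfying (1)--(3) does not come equipped with a canonical cell decomposition, so ``triangulate with vertices at the cone points and dualize'' does not by itself pin down a combinatorial type $\mathcal C$, and your closing claim that ``any polyhedron with polar isometric to $Q$ must realize $\mathcal C$'' is precisely a point that needs proof rather than a consequence of duality; the present paper itself has to confront this ambiguity at the end of the proof of Theorem \ref{P_t}, where it verifies by hand that the cell decomposition of $Q_t$ arising as a spherical polar is unique. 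Neither issue affects the paper, which---as you note---simply quotes \cite{RH}; the cone-angle computation in your necessity step is fine.
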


The word ``geodesic'' in the statement of condition (3) will be taken to mean a locally distance minimizing path/loop.  A geodesic in a piecewise spherical metric space is made up of arcs of great circles in cells such that at any point along the path of the geodesic, the angle subtended has measure greater than or equal to $\pi$ on either side.

Suppose that $P_0$ is a right-angled hyperbolic polyhedron with a very good edge $e$.  Think of $P_0$ combinatorially as the underlying combinatorial polyhedron, together with a labelling of each edge by the dihedral angle measure in its geometric realization.  In this case, each edge of $P_0$ is labelled by $\frac{\pi}{2}$.  

For each $t \in [0,1)$, let $P_t$ denote the combinatorial polyhedron isomorphic to the combinatorial polyhedron underlying $P_0$, every edge other than $e$ labelled by $\frac{\pi}{2}$, and the edge $e$ labelled by $\theta_t = (1-t) \frac{\pi}{2} + t\pi$.  Let $P_1$ denote the combinatorial polyhedron obtained by edge surgery of $P_0$ along the edge $e$ with each edge labelled by $\frac{\pi}{2}$.  

\begin{theorem} \label{P_t} Each $P_t$ has a geometric realization as a hyperbolic polyhedron.
\end{theorem}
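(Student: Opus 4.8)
The plan is to verify the three conditions of Rivin's Theorem \ref{Rivin} for the candidate polar $Q_t := P_t^*$, for each $t \in [0,1)$; the case $t=1$ is already covered by Theorem \ref{edgesurgeryrightangled} (here the hypothesis that $e$ is very good is used), which exhibits $P_1$ as a right-angled, hence realizable, polyhedron. So fix $t \in [0,1)$ and build $Q_t$ by gluing the polar triangles $v^*$ dictated by the edge labelling: when two vertices $v,w$ of $P_t$ are joined by an edge, glue $v^*$ and $w^*$ along the corresponding sides. Each $v^*$ is the spherical triangle whose side lengths are the exterior dihedral angles at the three edges of $v$, hence whose angles are the complements of the face angles at $v$. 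For a vertex $v$ not an endpoint of $e$, all three exterior angles equal $\tfrac{\pi}{2}$, so $v^*$ is the right-angled spherical octant with all sides and all angles equal to $\tfrac{\pi}{2}$. Write $v_1,v_2$ for the endpoints of $e$ and $C_i$ for the face at $v_i$ distinct from the two faces meeting along $e$; since faces of $P_0$ have at least five edges, $C_1 \neq C_2$ and $v_i$ is the only endpoint of $e$ on $C_i$. At $v_i$ the edge $e$ has exterior dihedral angle $\pi-\theta_t = (1-t)\tfrac{\pi}{2}$ while the other two edges have exterior angle $\tfrac{\pi}{2}$, so $v_i^*$ is the isosceles spherical triangle with sides $(1-t)\tfrac{\pi}{2},\tfrac{\pi}{2},\tfrac{\pi}{2}$ and angles $(1-t)\tfrac{\pi}{2},\tfrac{\pi}{2},\tfrac{\pi}{2}$, the angle $(1-t)\tfrac{\pi}{2}$ opposite the side dual to $e$; this triangle is nondegenerate precisely because $t<1$. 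Gluing produces a piecewise-spherical metric on a topological $S^2$ with cone points at the vertices of $Q_t$, dual to the faces of $P_t$: this is condition (1).

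For condition (2), the cone angle of $Q_t$ at the vertex dual to a face $F$ is the sum over $v \in F$ of the angle of $v^*$ at the corner dual to $F$. Each such angle is $\tfrac{\pi}{2}$ unless $v = v_i$ and $F = C_i$, in which case it is $(1-t)\tfrac{\pi}{2}$. Hence the cone angle at a face $F \notin \{C_1,C_2\}$ is $\tfrac{\pi}{2}\,|F| \ge \tfrac{5\pi}{2} > 2\pi$, since every face has at least five edges, while the cone angle at $C_i$ is $(1-t)\tfrac{\pi}{2} + \tfrac{\pi}{2}(|C_i|-1) \ge (1-t)\tfrac{\pi}{2} + 2\pi > 2\pi$ because $|C_i| \ge 5$ and $t<1$. (Largeness of the $C_i$ is not needed for this.) This gives condition (2). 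Equivalently, the links of the vertices of $P_t$ show that every face angle of $P_t$ equals $\tfrac{\pi}{2}$ except the two angles $\theta_t$ of $C_1$ at $v_1$ and of $C_2$ at $v_2$.

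Condition (3) --- that every closed geodesic of $Q_t$ has length strictly greater than $2\pi$ --- is the crux. The key structural observation is that $v_1^* \cup v_2^*$, glued along the common side dual to $e$, is isometric to a spherical lune of angle $(1-t)\tfrac{\pi}{2}$: its four remaining sides all have length $\tfrac{\pi}{2}$, the angles at the vertices dual to $C_1$ and $C_2$ are each $(1-t)\tfrac{\pi}{2}$, and the angles at the two vertices dual to the faces along $e$ are each $\pi$, so those two pairs of $\tfrac{\pi}{2}$-sides straighten into the two semicircular edges of the lune. Therefore $Q_t$ is obtained from $Q_0 = P_0^*$ by removing the lune of angle $\tfrac{\pi}{2}$ sitting there and gluing in a lune of angle $(1-t)\tfrac{\pi}{2}$ along the same boundary, the rest of the metric unchanged. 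Now suppose $\gamma$ were a closed geodesic of $Q_t$ of length at most $2\pi$; taking $\gamma$ shortest, it is simple. If $\gamma$ misses the lune it is equally a closed geodesic of $Q_0$ of length at most $2\pi$, contradicting condition (3) for $Q_0$ --- and that condition does hold, because $P_0$ is realizable by Pogorelov's classification (Corollary \ref{rightclassified}), so Rivin's Theorem \ref{Rivin} applies to $Q_0$. It remains to exclude the case that $\gamma$ crosses the thin lune. The plan here is to follow $\gamma$ through the one-parameter family $Q_s$, $0 \le s \le t$, as the lune widens back to angle $\tfrac{\pi}{2}$: developing $\gamma$ into $S^2$, its holonomy is a rotation whose angle is governed by the length of $\gamma$ and which varies continuously with $s$, from a value corresponding to length at most $2\pi$ at $s=t$ to one corresponding to length greater than $2\pi$ at $s=0$; at the intermediate parameter where the length equals $2\pi$ one gets a closed geodesic with trivial holonomy, and a Gauss--Bonnet count (using that all cone angles exceed $2\pi$) forces such a geodesic to bound a cone-point-free hemisphere. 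Reading that hemisphere back into $P_s$, whose combinatorics are those of $P_0$, then produces either a prismatic $3$- or $4$-circuit of $P_0$ or a prismatic $5$-circuit of $P_0$ containing $e$ --- all impossible, the last because $e$ is very good. With condition (3) in hand, Rivin's Theorem \ref{Rivin} furnishes the geometric realization of $P_t$.

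The one genuine obstacle is this last step of condition (3): controlling the closed geodesics of $Q_t$ that pass through the bent region. The lune description reduces the problem to an elementary two-dimensional spherical-geometry question, but converting the holonomy/Gauss--Bonnet heuristic into a clean dichotomy --- a forbidden short prismatic circuit versus a hemisphere --- is where the real work lies, and it is exactly the step that consumes the combinatorial hypotheses "no prismatic $3$- or $4$-circuit in $P_0$", "every face has at least five edges", and "$e$ is very good". Conditions (1) and (2), by contrast, are immediate computations with the polar triangles.
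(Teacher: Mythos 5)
Your verification of Rivin's conditions (1) and (2) is correct and essentially identical to the paper's computation, and your description of $v_1^*\cup v_2^*$ as a lune of angle $(1-t)\frac{\pi}{2}$ is exactly the paper's bigon $B_t$. But the crux of the theorem is condition (3) for closed geodesics that meet this bigon, and there your argument is only a heuristic which you yourself flag as incomplete --- and it does not work as sketched. Deforming through the family $Q_s$ and ``following'' $\gamma$ is not justified: a closed geodesic of $Q_t$ need not persist as a closed geodesic for nearby $s$, the claimed continuity of its length and holonomy in $s$ is not established, and the Gauss--Bonnet step is false as stated: for a disk $D$ with geodesic boundary in a curvature-$1$ cone surface, Gauss--Bonnet gives $\area(D)+\sum_{c_i\in D}(\alpha_i-2\pi)=2\pi$, which with all $\alpha_i>2\pi$ only yields $\area(D)\leq 2\pi$; it does not force a length-$2\pi$ geodesic to bound a cone-point-free hemisphere. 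So the dichotomy ``hemisphere versus forbidden prismatic circuit'' is never actually obtained, and that is precisely the step that consumes the hypotheses. The paper's route avoids this entirely: it compares $Q_t$ not with $Q_0$ but with $Q_1$, the polar of the surgered polyhedron $P_1$ (which exists because $e$ is very good, Theorem \ref{edgesurgeryrightangled}), via the explicit collapse of the bigon onto its leaf space $[0,\pi]$. Lemma \ref{bigon} shows this map is distance nonincreasing and a local isometry off $B_t$, and then a finite case analysis on how $\gamma$ meets $B_t$, using Lemma \ref{geodesicinstar} (a geodesic arc crossing an open star has length at least $\pi$) together with the absence of prismatic $3$-- and $4$--circuits to guarantee that the two relevant open stars are disjoint, gives length strictly greater than $2\pi$ in every case. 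Some comparable replacement for this analysis is what your proposal is missing, and it is the real content of the proof.

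A second omission: Rivin's Theorem \ref{Rivin} hands you a polyhedron whose polar is isometric to $Q_t$, but the statement being proved asserts a realization of the labelled combinatorial polyhedron $P_t$, and the later Schl\"afli argument needs the family $P_t$ to have fixed combinatorial type. The paper therefore spends a further argument showing that the triangulation $\mathcal{C}$ used to build $Q_t$ is the only cell decomposition of $Q_t$ compatible with being a spherical polar (vertices exactly at cone points, geodesic edges of length less than $\pi$, face angles less than $\pi$ at cone points), so the polyhedron produced really does realize $P_t$ with the prescribed dihedral angles. Your write-up jumps over this identification.
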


\begin{proof} By Theorem \ref{edgesurgeryrightangled} and assumption, $P_0$ and $P_1$ have geometric realizations as hyperbolic polyhedra (in fact, they are right-angled).  So assume $t$ lies in the open interval $(0,1)$, so that $P_t$ is combinatorially isomorphic to $P_0$.  

Let $Q_t$ denote the piecewise spherical metric space constructed in the following way.  For each vertex $v$ of $P_t$, construct a spherical triangle whose edge lengths are the complementary angle measures of the dihedral angle measures of the edges of $P_t$ incident to $v$.  Identify the edges of two such triangles if and only if their associated vertices are connected by an edge of $P_t$.  Then $Q_t$ is evidently a metric space which is homeomorphic to $\bS^2$.  It will be shown that $Q_t$ satisfies the conditions of Theorem \ref{Rivin} implying that $Q_t$ is the spherical polar of a hyperbolic polyhedron.  The theorem will be proved when it is shown that the hyperbolic polyhedron in question is in fact the geometric realization of $P_t$ by controlling for the combinatorics of the spherical cell division of $Q_t$.  

To show condition (1) of Theorem \ref{Rivin} holds for $Q_t$, note that the singular points of $Q_t$ correspond to the faces of $P_t$ since every other point lies in the interior of a face or the interior of an edge where two triangles meet.  These points have curvature 1.  Since $P_t$ has only finitely many faces, there are only finitely many cone points, thus demonstrating the first condition.  

The cone angle of a singular point $c_i$ is the sum of the angle measures at $c_i$ of the spherical triangles incident to $c_i$.   Suppose $F_i$ is the face of $P_t$ corresponding to $c_i$.  If $F_i$ is disjoint from the edge $e \subset P_t$, then each triangle incident to $c_i$ is isometric to a right-angled spherical triangle.  As $F_i$ has at least $k \geq 5$ edges, the cone angle at $c_i$ is $k \frac{\pi}{2} > 2\pi$.  

Suppose that $F_i$ contains the edge $e$ and let $k \geq 5$ be the number of edges of $F_i$.  Then $c_i$ is incident to $k-2$ right-angled triangles, and two triangles whose lengths are $\frac{\pi}{2}$, $\frac{\pi}{2}$, and $\pi - \theta_t$.  A bit of elementary spherical geometry reveals that such a triangle has interior angle measures equal to the length of the edge opposite the angle.  Therefore, these two triangles are incident to $c_i$ in right angles, and so the cone angle around $c_i$ is $k \frac{\pi}{2} > 2\pi$.  

Suppose that $F_i$ contains a single vertex of $e$.  Then $e$ edge connects $F_i$ to some other face of $P_t$, and thus, by assumption, is very good.  This implies that the face $F_i$ is large and so has $k \geq 6$ edges.  Thus $c_i$, has $k \geq 6$ triangles incident to it.  All of the triangles incident to $c_i$ except one are right-angled.  The exceptional triangle is isometric to the sort of triangle described above whose lengths are $\frac{\pi}{2}$, $\frac{\pi}{2}$, and $\pi - \theta_t$.  This triangle is incident to $c_i$ in the non-right angle, and therefore the cone angle of this point is given by $(k-1)\frac{\pi}{2} + \pi - \theta_t > 2\pi$.  This shows condition (2).

Denote the edge of $Q_t$ which is dual to $e \subset P_t$ by $e^*$.  This edge $e^*$ is contained in a pair of triangles $T_1$ and $T_2$ isometric to the ones described above with edge lengths $\frac{\pi}{2}$, $\frac{\pi}{2}$, and $\pi - \theta_t$.  Let $B_t$ denote the spherical bigon which is the union of these two triangles.  Note that the two edges of $B_t$ both have length $\pi$ which meet at two points at an angle whose measure is $\pi - \theta_t$.  Denote the endpoints of $e^*$ by $w_1$ and $w_2$ and denote the vertices of $B_t$ which are not endpoints of $e^*$ by $v_1$ and $v_2$.  See Figure \ref{fig: dualedge}.

\begin{figure}
	\centering
	\includegraphics[bb=0 0 330 146]{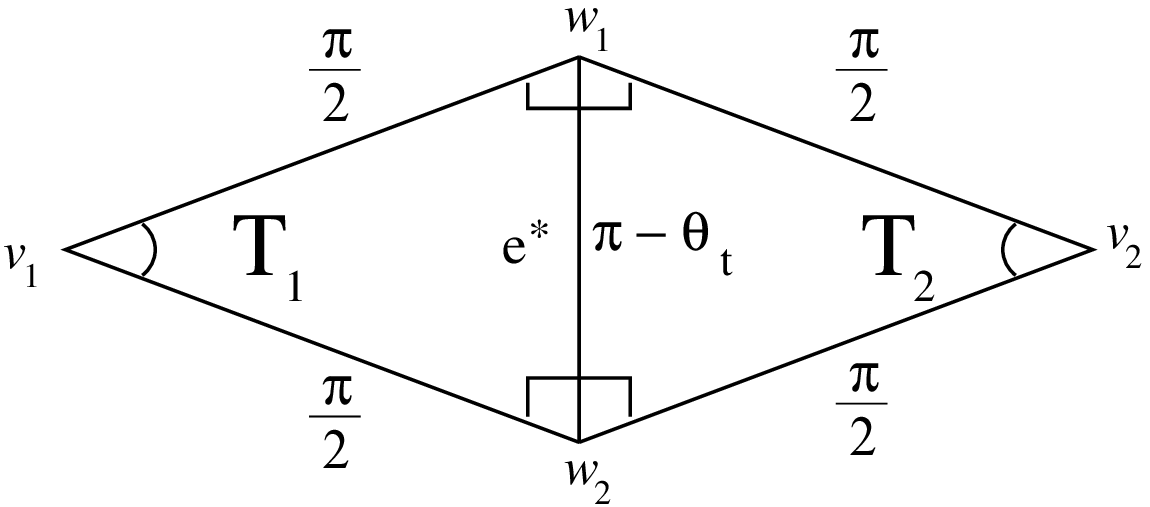}
% dualedge.eps: 300dpi, width=2.79cm, height=1.24cm, bb=0 0 330 146
	\caption{The bigon $B_t$ in the spherical dual of $P_t$.}
	\label{fig: dualedge}
\end{figure}

Note that if a point in $B_t$ is distance $d$ from $v_1$, then it is distance $\pi - d$ from $v_2$.  Therefore, $B_t$ is foliated by arcs of circles of radius $d$ from the point $v_1$ where $d \in [0, \pi]$, and the leaf space of this foliation is isometric to the interval $[0, \pi]$.   

Note also that as $t$ approaches 1, the bigon $B_t \subset Q_t$ degenerates into a line segment $B_1$ isometric to $[0,\pi]$.

Fix $t \in (0,1)$ and let $\overline{f} : B_t \rightarrow [0,\pi] \cong B_1$ denote the projection onto the leaf space.  Let $f : Q_t \rightarrow Q_1$ be the continuous map which when restricted to any cell of $Q_t$ which does not belong to $B_t$ realizes the natural correspondence of cells by an isometry, and which when restricted to $B_t$, is the map $\overline{f}$.

\begin{lemma} \label{bigon} This map $f$ is distance nonincreasing and is a local isometry when restricted to $Q_t \setminus B_t$.  In particular, it is an isometry when restricted to the star of any vertex of $Q_t$ not contained in the bigon $B_t$.
\end{lemma}

\begin{proof} Let $x$ and $y$ be a pair of points of $Q_t$ and let $\gamma$ be a geodesic segment connecting them whose length is the distance between them.  If $\gamma$ does not intersect the bigon $B_t$, then $f(\gamma)$ is a geodesic segment in $Q_1$ and so $d_t(x,y) = d_1(f(x),f(y))$ where $d_t$ and $d_1$ denote the metrics in $Q_t$ and $Q_1$ respectively.

So suppose the geodesic segment $\gamma$ intersects the bigon $B_t$, and let $\tau = \gamma \cap B_t$.  Denote the endpoints of $\tau$ in $B_t$ by $a$ and $b$.  Then $f(\gamma)$ is a broken geodesic with a segment given by $f(\tau)$ lying on the interval $B_1 = [0, \pi]$ with endpoints given by $f(a), f(b) \in [0,\pi]$.  

Without loss of generality, assume that $f(a) \geq f(b)$.  Then the length of $f(\tau)$ is $f(a) - f(b)$.  Since $f$ is a local isometry outside of $B_t$, to prove the lemma it will be shown that the length of $\tau$ in $Q_t$, denoted $l(\tau)$, is greater than or equal to $f(a)-f(b)$.  

If $f(a)-f(b) > l(\tau)$, then in $B_t$, the path $\tau$ concatenated with the geodesic segment connecting $b$ to $v_1$, running along the boundary, of $B_t$ has endpoints $a$ and $v_1$ and has length $l(\tau) + f(b)$ which is strictly less than $f(a)$.  However, the distance between $a$ and $v_1$ is given by $f(a)$.  This is a contradiction.
\end{proof}

If $v$ is a vertex of $Q_t$, let $st(v)$ denote the {\it star} of $v$.  Let $ost(v)$ denote the {\it open star} of $v$, which is the interior of $st(v)$.  The following well known and important lemma gives an estimate on the length of a geodesic arc contained in $st(v)$.  For a proof, see \cite{CD}.

\begin{lemma} \label{geodesicinstar} Suppose $Q$ is a piecewise spherical metric space which is the spherical polar of a non-obtuse hyperbolic polyhedron.  Let $v$ be a vertex of $Q$ and $\gamma$ a geodesic segment in $st(v)$ joining two points of $\partial st(v)$ such that $\gamma \cap ost(v) \neq \emptyset$.  Then the length of $\gamma$ is at least $\pi$.
\end{lemma}

Let $\gamma$ now denote a closed geodesic in $Q_t$.  If $\gamma$ does not intersect $int(B_t)$, then $f(\gamma)$ is a closed geodesic in $Q_1$ and so therefore has length greater than $2\pi$ by Rivin's Theorem \ref{Rivin} as $Q_1$ is the spherical polar of the right-angled hyperbolic polyhedron $P_1$.  Therefore, the length of $\gamma$ is also greater than $2\pi$ by Lemma \ref{bigon}.

So suppose $\gamma$ intersects $int(B_t)$.  It is clear that $\gamma$ cannot be completely contained in $B_t$.  Let $\gamma_i \subset \gamma$ denote the closure of a component of $\gamma \cap int(B_t)$.

Fixing notation, let $T_1$ be the triangle in $B_t$ with vertices $v_1$, $w_1$, and $w_2$, and $T_2$ the triangle with vertices $v_2$, $w_1$ and $w_2$.   

Suppose first that $\gamma_i$ contains contains $v_1$ or $v_2$.  Then a bit of elementary spherical geometry implies that $\gamma_i$ contains the other vertex $v_2$ or $v_1$ and that the length of $\gamma_i$ is $\pi$.  Let $X$ and $Y$ denote the right-angled triangles $\gamma$ enters after leaving $B_t$ through $v_1$ and $v_2$ respectively.  Then the lengths of both $\gamma \cap X$ and $\gamma \cap Y$ are $\frac{\pi}{2}$.  Note that if $X$ and $Y$ are adjacent in $Q_t$, then the polyhedron $P_t$ contains a prismatic $4$--circuit which is a contradiction.  Therefore, $X$ and $Y$ are not adjacent, and so the length of $\gamma$ is larger than $2\pi$.

Suppose once and for all that $\gamma_i$ misses $v_1$ and $v_2$.  If $\gamma_i$ misses $e^*$, then it is completely contained in one of the triangles $T_1$ or $T_2$.  Without loss of generality, suppose $\gamma_i \subset T_1$.  Let $X$ and $Y$ denote the right angled triangles adjacent to $T_1$.  let $x$ and $y$ denote the vertices of $X$ and $Y$ respectively which are not contained in $T_1$.  Then $\gamma$ evidently intersects $ost(x)$ and $ost(y)$.  Denote the closure of the component of $ost(x) \cap \gamma$ which is adjacent to $\gamma_i$ by $\gamma_x$ and define $\gamma_y$ similarly.  Then $f(\gamma_x) \subset Q_1$ is a geodesic segment contained in $st(f(x))$ by Lemma \ref{bigon} which intersects $ost(f(x))$ and therefore, by Lemma \ref{geodesicinstar} has length at least $\pi$.  Therefore $\gamma_x$ also has length at least $\pi$.  A similar argument shows $\gamma_y$ has length at least $\pi$ as well.  Note that $ost(x) \cap ost(y) = \emptyset$ since if not, then $P_t$ would contain a prismatic 3 or 4--circuit. Since $\gamma_i$ has nonzero length, $\gamma$ must have length larger than $2\pi$.  

Suppose that $\gamma_i$ intersects $int(e^*)$ transversely.  Let $X$ and $Y$ denote the right-angled triangles adjacent to each of $T_1$ and $T_2$ respectively which contain the endpoints of $\gamma_i$.  A bit of simple spherical geometry shows that $X$ and $Y$  must lie on oppsite sides of $B_t$ in the sense that if $X$ contains $w_1$ say, then $Y$ contains $w_2$.  Let $x$ and $y$ denote the vertices of $X$ and $Y$ respectively which are not contained in $T_1$ and $T_2$ respectively.  Then $\gamma$ evidently intersects $ost(x)$ and $ost(y)$.  By an argument similar to the one above, this implies that the length of $\gamma$ is larger than $2\pi$.

Suppose $\gamma_i$ contains the edge $e^*$.  Then $f(\gamma)$ is a closed geodesic in $Q_1$ and therefore must have length larger than $2\pi$.  Thus by Lemma \ref{bigon}, $\gamma$ must also have length larger than $2\pi$.  

Finally, suppose that $\gamma_i$ intersects $e^*$ exactly in one of its endpoints $w_1$ or $w_2$, and without loss of generality, suppose $w_1 \in \gamma_i$.  Then $\gamma_i$ is contained in either $T_1$ or $T_2$, so suppose also without loss of generality that $\gamma_i \subset T_1$.  Let $X$ denote the right-angled triangle adjacent to $T_1$ which contains the vertices $w_2$ and $v_1$ and let $x$ denote the remaining vertex of $X$.  Then $\gamma$ evidently intersects $ost(x)$.  Let $Y$ denote the right-angled triangle other than $T_1$ that contains the vertex $w_1$ and intersects $\gamma$.  Then $Y$ is contained in the star of some vertex which is not $v_i$ or $w_i$.  Denote this vertex by $y$.  So $\gamma$ intersects both $ost(x)$ and $ost(y)$ and by an argument similar to the one above, $\gamma$ has length larger than $2\pi$.  

This shows that $Q_t$ satisfies condition (3) of Theorem \ref{Rivin} and thus is the spherical polar of some hyperbolic polyhedron.  However, it is too hasty to conclude that this polyhedron is the geometric realization of $P_t$ as it is possible that there is more than one cell decomposition of $Q_t$ which satisfy the conditions of being a spherical polar of a hyperbolic polyhedron.

To prove that this cannot be the case, denote by $\mathcal{C}$ the cell decomposition used to construct $Q_t$ , and denote by $\mathcal{D}$ the cell decomposition of $Q_t$ as the spherical polar of the hyperbolic polyhedron coming from Theorem \ref{Rivin}.  It will be shown that $\mathcal{C} = \mathcal{D}$. 

$\mathcal{D}$ must satisfy the following three conditions: 

\setcounter{enumi}{0}
\begin{enumerate}
\item The vertex set is exactly the set of cone points.

\item The edges must be geodesic arcs of length less than $\pi$.

\item The interior face angles at each cone point must have measure less than $\pi$. 

\end{enumerate}
The first condition is clear from the construction of spherical polars.  The second condition is implied by the convexity of the hyperbolic polyhedron as edge lengths are equal to complements of interior dihedral angle measures.  The third condition is again a consequence of convexity, but convexity of the faces of a hyperbolic polyhedron as the face angles of a cone point have measures which are complementary to the face angles of the polyhedron.  Note that $\mathcal{C}$ satisfies these three conditions.

Note that if $c$ is a cone point of $Q_t$ and if $c$ is neither $w_1$ nor $w_2$, then the only other cone points at distance less than $\pi$ to $c$ are exactly distance $\frac{\pi}{2}$ away.  By conditions (1) and (2), these are the only possible endpoints of edges connecting $c$.  

Suppose $c$ is not $v_i$ or $w_i$ for $i=1,2$.  If $b$ is a cone point at distance $\frac{\pi}{2}$ from $c$ and if $b$ and $c$ are not connected by an edge in $\mathcal{D}$, then there must exist a 2--cell in $\mathcal{D}$ whose face angle at $c$ has measure $\pi$ or larger.  This contradicts condition (3).  Therefore, in $\mathcal{D}$, $c$ must be connected to every other cone point distance $\frac{\pi}{2}$ away, just as in $\mathcal{C}$.  

Consider next $v_1$.  The above argument shows that any cone point of distance $\frac{\pi}{2}$ from $v_1$ which is not $w_1$ or $w_2$ must be connected to $v_1$ by an edge in $\mathcal{D}$.  If $v_1$ and $w_1$ are not connected by an edge in $\mathcal{D}$, then there must be a 2--cell in $\mathcal{D}$ whose face angle at $w_1$ has measure $\pi$, a contradiction to condition (3).  This argument works just as well to show that both $v_1$ and $v_2$ are connected to both $w_1$ and $w_2$ by edges in $\mathcal{D}$.  

So far, it has been shown that every edge in $\mathcal{C}$ with an endpoint other than $w_1$ or $w_2$ must also be in $\mathcal{D}$.  The only edge in $\mathcal{C}$ not accounted for is the one connecting $w_1$ and $w_2$.  This must be in $\mathcal{D}$ as well since if it were not, there would be a 2--cell in $\mathcal{D}$ containing a face angle of measure $\pi$ at both $w_1$ and $w_2$ --- a contradiction.  Therefore, this edge is in $\mathcal{D}$ and so the edges of $\mathcal{C}$ are a subset of the edges of $\mathcal{D}$.

Given conditions (1) and (2), the only other possible edges in $\mathcal{D}$ are those edges not in $\mathcal{C}$ which connect $w_1$ (or $w_2$) to a cone point connected to $w_2$ (or $w_1$).  However, it is rather clear that such an edge would intersect an existing edge in $\mathcal{D}$ in an interior point which is a contradiction to condition (1).

Therefore $\mathcal{C} = \mathcal{D}$ and this proves Theorem \ref{P_t}.
\end{proof}

The set of polyhedra $P_t$, $t \in [0,1)$ form a 1--parameter family of hyperbolic polyhedra of fixed combinatorial type.  Therefore, the following classical result of Schl\"afli is applicable (see \cite{AV} for a proof).  It is often referred to as {\it Schl\"afli's Differential Formula}.

\begin{theorem} \label{Schlafli} If $P_t$ is a 1--parameter family of polyhedra in $\HH^n$, $n \geq 2$, then the derivative of the volume $\vol$ of $P_t$ is given by:
$$\frac{d\vol(P_t)}{dt} = \frac{-1}{n-1} \sum_F \vol(F) \frac{d\theta_F}{dt}$$
where the sum is taken over all codimension 2 faces of $P_t$ and $\theta_F$ is the measure of the dihedral angle of $P_t$ at the face $F$.  
\end{theorem}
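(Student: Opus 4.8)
\medskip
\noindent\textbf{Proof proposal.} This is a classical result, with a proof in \cite{AV}; here is the route I would take. The plan is to differentiate volume directly, realizing $\frac{d}{dt}\vol(P_t)$ as a flux of the boundary velocity and then converting that flux, one facet at a time, into the dihedral-angle terms by an integration by parts along the facet. The argument is uniform in $n$; for $n=2$ it is just the derivative of the Gauss--Bonnet formula $\area(P_t) = (k-2)\pi - \sum_j\alpha_j$.

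First I would set up the kinematics. Write $P_t = \bigcap_i H_i(t)^-$ as an intersection of closed half-spaces bounded by geodesic hyperplanes $H_i(t)$, and let $\nu_i$ be the outward unit normal of $H_i$ along the facet $F_i\subset\partial P_t$. Since $\frac{d}{dt}\vol(P_t)$ and each $\frac{d}{dt}\theta_F$ depend linearly on the infinitesimal motions of the individual hyperplanes, it suffices to treat the case in which a single hyperplane $H = H_i$ moves while all the others are held fixed. Its infinitesimal motion is the restriction to $H$ of a Killing field $\xi$ of $\HH^n$, and the induced outward normal speed of $F_i$ is the function $\phi = \langle\xi,\nu_i\rangle$ on $H\cong\HH^{n-1}$ (this depends only on the motion of $H$, since altering $\xi$ by a Killing field tangent to $H$ leaves $\langle\xi,\nu_i\rangle$ unchanged). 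Differentiating volume along the moving region --- a standard fact, the corners contributing nothing since they have measure zero --- gives
$$\frac{d}{dt}\vol(P_t) = \int_{F_i}\phi\, dA,$$
since every other facet lies in a fixed hyperplane and hence has zero normal speed, even though its shape changes near the moving ridges.

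The geometric input is two facts about $\phi$. (i) Because $H$ is totally geodesic and $\HH^n$ has constant curvature $-1$, the Bochner identity $\nabla^2_{X,Y}\xi = -R(\xi,X)Y$ for Killing fields, after restriction to $H$ and pairing with $\nu_i$, collapses to $\nabla^2\phi = \phi\, g_H$ on $H$; taking traces, $\Delta_H\phi = (n-1)\phi$. (ii) For each ridge $R_j = F_i\cap F_j$, let $\mu_j$ be the unit vector normal to $R_j$ inside $H$ and pointing out of $F_i$. Then $\partial_{\mu_j}\phi$ is \emph{constant} along $R_j$: differentiating $\partial_{\mu_j}\phi$ in a direction $X$ tangent to $R_j$ gives $\nabla^2\phi(X,\mu_j) + \langle\nabla\phi,\nabla_X\mu_j\rangle$, and both terms vanish on $R_j$ because $X\perp\mu_j$ there and $R_j$ is totally geodesic in $H$. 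Finally, a short computation inside the totally geodesic $\HH^2$ normal to $R_j$ --- where $H$ and $H_j$ appear as two geodesics meeting at angle $\theta_{ij}$ --- identifies this constant as $-\frac{d}{dt}\theta_{ij}$.

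Granting these, the divergence theorem on the polytope $F_i\subset H$ finishes the single-hyperplane case:
$$(n-1)\int_{F_i}\phi\, dA = \int_{F_i}\Delta_H\phi\, dA = \sum_j\int_{R_j}\partial_{\mu_j}\phi\, dA = -\sum_j\Big(\frac{d}{dt}\theta_{ij}\Big)\vol(R_j),$$
whence $\frac{d}{dt}\vol(P_t) = \int_{F_i}\phi\, dA = \frac{-1}{n-1}\sum_j\vol(R_j)\,\frac{d}{dt}\theta_{ij}$, which is the asserted formula because the codimension-two faces not meeting $H$ contribute $\frac{d}{dt}\theta_F = 0$. Summing over $i$ and using $\frac{d}{dt}\theta_F = \frac{\partial\theta_F}{\partial H_i} + \frac{\partial\theta_F}{\partial H_j}$ for each ridge $F = R_{ij}$ recovers the general case. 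I expect the one real obstacle to be fact (ii) --- pinning down the sign in its identification with $-\frac{d}{dt}\theta_{ij}$, and the constancy along the ridge --- together with the care needed to justify the transport step for a domain whose boundary strata change combinatorially in $t$; the rest is a routine application of Green's identity. As an alternative to the flux computation, one can first reduce to simplices by additivity: subdivide $P_t$ by an auxiliary hyperplane and observe that each newly created interior ridge carries supplementary dihedral angles in the two pieces, so those derivatives cancel, leaving only the codimension-two faces of $P_t$; one then checks the formula on simplices.
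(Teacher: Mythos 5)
The paper does not actually prove Theorem \ref{Schlafli}; it records it as a classical result and refers to \cite{AV} for a proof, so there is no in-paper argument to compare against. Judged on its own, your sketch is a correct outline of the standard variational proof (the one found in the classical literature, e.g.\ the flux/Green's-identity argument). The key points all check out: reduction to a single moving supporting hyperplane is legitimate because both $\frac{d}{dt}\vol(P_t)$ and each $\frac{d}{dt}\theta_F$ are linear in the hyperplane velocities; for a Killing field $\xi$ of $\HH^n$ and a totally geodesic $H$ with unit normal $\nu$, the Kostant identity together with constant curvature $-1$ and the vanishing second fundamental form of $H$ does give $\mathrm{Hess}_H\,\phi=\phi\,g_H$ for $\phi=\langle\xi,\nu\rangle$, hence $\Delta_H\phi=(n-1)\phi$; constancy of $\partial_{\mu_j}\phi$ along a ridge follows exactly as you say since the ridge is totally geodesic in $H$; and the first-order computation in the $2$--plane orthogonal to the ridge (which is purely infinitesimal, so the Euclidean picture of an affine normal-speed function with slope equal to the angular velocity already fixes the sign) confirms $\partial_{\mu_j}\phi=-\frac{d}{dt}\theta_{ij}$ with the outward choice of $\mu_j$, so Green's formula on the facet yields $\frac{d}{dt}\vol(P_t)=\frac{-1}{n-1}\sum_F\vol(F)\frac{d\theta_F}{dt}$ with the correct hyperbolic sign. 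The two caveats you flag yourself are the right ones: the statement implicitly assumes a smooth family of fixed combinatorial type (as is the case for the family $P_t$ used in Section 8, where the angle label varies linearly and Theorem \ref{Rivin} gives the realizations), and the sign bookkeeping at the ridges is where an error would most easily creep in. Your alternative closing remark (additivity under subdivision, with supplementary angles cancelling on newly created interior ridges, reducing to simplices) is also a standard and valid route and is closer in spirit to the treatment in the cited reference.
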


This formula implies that as the dihedral angle measure of a hyperbolic polyhedron in any dimension increases, the volume decreases.  For the family $P_t$ constructed above, the dihedral angle along the very good edge $e$ is increasing from $\frac{\pi}{2}$ to $\pi$.  Therefore:

\begin{theorem} \label{edgesurgeryvoldecreasing} If $P_1$ is a right-angled hyperbolic polyhedron gotten by edge surgery along some very good edge of $P_0$, another right-angled hyperbolic polyhedron, then 
$$\vol(P_0) > \vol(P_1).$$ \qed
\end{theorem}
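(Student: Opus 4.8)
The plan is to apply Schl\"afli's differential formula (Theorem \ref{Schlafli}) to the $1$--parameter family $\{P_t\}_{t\in[0,1)}$ of hyperbolic polyhedra supplied by Theorem \ref{P_t}, conclude that $\vol(P_t)$ is strictly decreasing in $t$, and then pass to the limit $t\to 1$ to compare with $\vol(P_1)$.

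First I would observe that by Theorem \ref{P_t} the polyhedra $P_t$ for $t\in[0,1)$ form a $1$--parameter family of hyperbolic polyhedra all of the same combinatorial type, so Theorem \ref{Schlafli} applies with $n=3$. In that formula the sum is taken over the edges of $P_t$. Every edge other than $e$ has the constant dihedral angle $\frac{\pi}{2}$ and so contributes nothing, while the edge $e$ has dihedral angle $\theta_t=(1-t)\frac{\pi}{2}+t\pi$, whose $t$--derivative is the constant $\frac{\pi}{2}>0$. Writing $\ell_e(t)$ for the (positive) hyperbolic length of the edge $e$ in $P_t$, Schl\"afli's formula gives
$$\frac{d\,\vol(P_t)}{dt}=-\frac{1}{2}\,\ell_e(t)\cdot\frac{\pi}{2}=-\frac{\pi}{4}\,\ell_e(t)<0 ,$$
so $\vol(P_t)$ is a strictly decreasing function of $t$ on $[0,1)$; in particular $\vol(P_0)>\vol(P_{t})$ for every $t\in(0,1)$.

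The remaining, and delicate, point is to show $\lim_{t\to1}\vol(P_t)=\vol(P_1)$. Here I would use that as $t\to1$ the bigon $B_t$ inside the spherical polar $Q_t$ (constructed in the proof of Theorem \ref{P_t}) collapses to the segment $B_1$, so that $Q_t$ converges to the polar $Q_1$ of $P_1$; since by Rivin's Theorem \ref{Rivin} the polyhedron is determined by, and in fact depends continuously on, its polar, the polyhedra $P_t$ (suitably normalized in $\HH^3$) converge geometrically to $P_1$ as the dihedral angle at $e$ opens to $\pi$ and $e$ is absorbed into a single face. Hyperbolic volume is continuous under such convergence, so $\lim_{t\to1}\vol(P_t)=\vol(P_1)$. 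Combining this with the monotonicity: for any fixed $t_0\in(0,1)$ we have $\vol(P_{t_0})\ge \lim_{t\to1}\vol(P_t)=\vol(P_1)$, and $\vol(P_0)>\vol(P_{t_0})$, whence $\vol(P_0)>\vol(P_1)$.

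I expect the endpoint continuity $\vol(P_t)\to\vol(P_1)$ to be the main obstacle, since the combinatorial type genuinely changes at $t=1$ (the edge $e$ disappears). An alternative to invoking continuity of Rivin's correspondence is to integrate the identity above, obtaining $\vol(P_0)-\vol(P_t)=\frac{\pi}{4}\int_0^t\ell_e(s)\,ds$; one then checks that $\ell_e(s)$ remains bounded as $s\to1$ (it converges to the length of the trace of $e$ in $P_1$), so the integral converges to a strictly positive number and $\lim_{t\to1}\vol(P_t)$ exists, after which identifying this limit with $\vol(P_1)$ only requires that the $P_t$ exhaust $P_1$ up to a set of measure zero. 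Either route yields the strict inequality.
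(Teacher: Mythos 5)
Your argument is correct and is essentially the paper's own proof: the paper likewise applies Schl\"afli's differential formula (Theorem \ref{Schlafli}) to the deformation family $P_t$ of Theorem \ref{P_t}, in which only the angle at $e$ varies (increasing from $\frac{\pi}{2}$ to $\pi$), and concludes that the volume strictly decreases. Your explicit treatment of the endpoint continuity $\vol(P_t)\to\vol(P_1)$ as $t\to 1$ (via convergence of the polars $Q_t\to Q_1$, or the bounded-length integration of the Schl\"afli identity) is a worthwhile detail that the paper passes over in silence, but it does not change the route.
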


\section{Conclusion}
\setcounter{theorem}{0}

Let $P$ be a right-angled hyperbolic polyhedron.  Then Theorem \ref{vgcomp} implies that either $P$ is a L\"obell polyhedron, it is decomposable, or it has a very good edge along which edge surgery can be performed.  A similar trichotomy holds for the polyhedron or polyhedra which result after applying decomposition or edge surgery.  Note that each of these operations reduce the average number of faces of the polyhedra and so a process of repeated application of them must terminate in a set of L\"obell polyhedra after a finite number of steps.  Furthermore, by Theorems \ref{decompvoldecresing} and \ref{edgesurgeryvoldecreasing} the total volume of polyhedra does not increase at each step.  Therefore, the following result, the main result of this article, follows:

\begin{theorem} \label{organization} Let $P_0$ be a compact right-angled hyperbolic polyhedron.  Then there exists a sequence of disjoint unions of right-angled hyperbolic polyhedra $P_1, P_2, \dots, P_k$ such that for $i=1,\dots, k$, $P_i$ is gotten from $P_{i-1}$ by either a decomposition or edge surgery, and $P_k$ is a set of L\"obell polyhedra. Furthermore, 
$$\vol(P_0) \geq \vol(P_1) \geq \vol(P_2) \geq \dots \geq \vol(P_k).$$ \qed
\end{theorem}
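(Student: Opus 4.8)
The plan is to iterate the trichotomy supplied by Theorem~\ref{vgcomp} and then prove the iteration terminates. Set up the algorithm as follows. Given a disjoint union $Q$ of right-angled hyperbolic polyhedra, if every component is a L\"obell polyhedron, stop; otherwise pick a component $R$ that is not of L\"obell type. By Theorem~\ref{vgcomp}, $R$ either has a very good edge or is decomposable, and — tracing through the proofs of Theorems~\ref{goodexist} and \ref{vgcomp} — whenever decomposability is invoked it is realized along a prismatic $5$-circuit with no flats. Produce the next term $Q'$ by performing, on $R$ alone, edge surgery along a very good edge when one exists (which keeps $R$ right-angled by Theorem~\ref{edgesurgeryrightangled}) and otherwise a decomposition of $R$ along such a prismatic $5$-circuit (whose pieces are right-angled by Theorem~\ref{p5cdec}), leaving the other components of $Q$ untouched. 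Applying this starting from $P_0$ yields a sequence $P_0, P_1, P_2, \dots$ of disjoint unions of right-angled hyperbolic polyhedra in which each $P_i$ is obtained from $P_{i-1}$ by a single decomposition or edge surgery.

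To see the sequence is finite, introduce a complexity function. For a right-angled hyperbolic polyhedron $R$ with face set $F(R)$, let $c(R) = \sum_{F \in F(R)}(E(F) - 5)$; the Euler-characteristic computation in the proof of Lemma~\ref{twelvepentagons} shows $c(R) = f(R) - 12 \ge 0$, with equality exactly when $R$ is a dodecahedron. For a disjoint union $Q = \bigsqcup_j R_j$ put $\Phi(Q) = \sum_j c(R_j)$, a non-negative integer. An edge surgery merges the two faces incident to the surgered edge into a single face and creates no new face, so it lowers the face count of the affected component by $1$, hence lowers $\Phi$ by $1$. A decomposition of a component with $f$ faces along a prismatic $k$-circuit replaces it by two components whose face counts sum to $f + k + 2$ (each of the $k$ faces met by the circuit is cut in two and one fresh $k$-gon appears on each side), so it changes $\Phi$ by $(f_1 - 12) + (f_2 - 12) - (f - 12) = k - 10$, which equals $-5$ when $k = 5$. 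Thus $\Phi$ strictly decreases at each step; since $\Phi \ge 0$ and is integer-valued, the algorithm halts after finitely many steps at some $P_k$. As the algorithm halts precisely when every component is of L\"obell type, $P_k$ is a set of L\"obell polyhedra.

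Finally, chain the volume inequalities. Hyperbolic volume is additive over disjoint unions, so it suffices to compare the modified component of $P_{i-1}$ with its image in $P_i$. If $P_i$ comes from $P_{i-1}$ by edge surgery of a component $R$ to $R'$, Theorem~\ref{edgesurgeryvoldecreasing} gives $\vol(R) > \vol(R')$; if $P_i$ comes from decomposing a component $R$ into $R_1 \sqcup R_2$, Theorem~\ref{decompvoldecresing} gives $\vol(R) \ge \vol(R_1) + \vol(R_2)$. In either case $\vol(P_{i-1}) \ge \vol(P_i)$, and concatenating these yields $\vol(P_0) \ge \vol(P_1) \ge \dots \ge \vol(P_k)$. (If $P_0$ is already of L\"obell type, take $k = 0$ and the chain is trivial.)

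The main obstacle is the termination step: one must certify that the operations returned by Theorem~\ref{vgcomp} can always be chosen so as to strictly decrease a fixed, bounded-below complexity. A naive count like the total number of faces fails, since decomposition increases it; the crucial points are that $c(R) = f(R) - 12$ is non-negative by Lemma~\ref{twelvepentagons}, that $\Phi$ is additive over components, and that the decompositions produced in the proofs of Theorems~\ref{goodexist} and \ref{vgcomp} are always along prismatic $5$-circuits, for which the change $k - 10$ in $\Phi$ is negative. All the geometric input has already been isolated in Theorems~\ref{decompvoldecresing} and \ref{edgesurgeryvoldecreasing}, so once the complexity function is in hand the remainder is bookkeeping.
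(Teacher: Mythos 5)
Your proposal is correct and follows essentially the same route as the paper: iterate the trichotomy of Theorem \ref{vgcomp}, keep all pieces right-angled via Theorems \ref{edgesurgeryrightangled} and \ref{p5cdec}, and chain the volume inequalities supplied by Theorems \ref{decompvoldecresing} and \ref{edgesurgeryvoldecreasing}. The only difference is the termination step, where the paper merely remarks that each operation reduces the average number of faces, whereas your integer monovariant $\Phi=\sum_j\bigl(f(R_j)-12\bigr)$, combined with the observation that the decompositions arising in Theorems \ref{goodexist} and \ref{vgcomp} are always along prismatic $5$--circuits, makes that step fully rigorous.
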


With a blackbox or oracle which is able to compute volumes of right-angled polyhedra, this result would enable one to completely order the volumes of such objects.  In particular, the following result is a simple corollary:

\begin{corollary} \label{smallestvol} The compact right-angled hyperbolic polyhedron of smallest volume is $L(5)$ (a dodecahedron) and the second smallest is $L(6)$.
\end{corollary}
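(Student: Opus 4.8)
The plan is to push everything down to L\"obell polyhedra via Theorem \ref{organization} and then exploit the strict monotonicity of $n \mapsto \vol(L(n))$ from Theorem \ref{lobellincreasing} together with the numerical values of Table 1. So, given a compact right-angled hyperbolic polyhedron $P$, I would fix a reduction $P = P_0, P_1, \dots, P_k$ as in Theorem \ref{organization}, with $P_k = L(n_1) \sqcup \cdots \sqcup L(n_m)$, $m \geq 1$, every $n_j \geq 5$, so that $\vol(P) \geq \vol(P_k) = \sum_{j=1}^m \vol(L(n_j))$.

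For the smallest volume, since $\vol(L(n_j)) \geq \vol(L(5))$ for each $j$ by Theorem \ref{lobellincreasing}, this chain already gives $\vol(P) \geq m\,\vol(L(5)) \geq \vol(L(5))$, and $\vol(L(5))$ is attained (by the dodecahedron $L(5)$ itself). For uniqueness I would observe that $\vol(P) = \vol(L(5))$ forces equality throughout: $m = 1$, then $n_1 = 5$ by strict monotonicity, and $\vol(P) = \vol(P_k)$. Since $m = 1$ means no decomposition was used (the number of components never decreases, and strictly increases at each decomposition), every step of the reduction is an edge surgery; but each edge surgery strictly lowers volume (Theorem \ref{edgesurgeryvoldecreasing}), so $\vol(P) = \vol(P_k)$ forces $k = 0$, i.e. $P \cong L(5)$.

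For the second smallest I would assume $P \not\cong L(5)$ and show $\vol(P) \geq \vol(L(6))$ with equality only for $L(6)$; as $\vol(L(5)) < \vol(L(6))$ (Table 1), this identifies the second smallest. If $m \geq 2$ then $\vol(P) \geq 2\,\vol(L(5)) = 8.61\ldots > 6.02\ldots = \vol(L(6))$. If $m = 1$, the reduction is a chain of edge surgeries ending in $L(n_1)$; when $n_1 \geq 6$, monotonicity gives $\vol(P) \geq \vol(L(n_1)) \geq \vol(L(6))$, with equality forcing $n_1 = 6$ and, by the strictness of edge surgery, $k = 0$, so $P \cong L(6)$. The one case needing a separate idea is $m = 1$, $n_1 = 5$, $k \geq 1$, and I would rule it out combinatorially: in an edge surgery the two faces of the earlier polyhedron meeting along the surgered edge are amalgamated into a single face of the result, and since in a right-angled polyhedron each of those faces has at least $5$ edges and two faces share at most one edge, the amalgamated face --- which loses the common edge and has two further pairs of edges identified --- has at least $5 + 5 - 4 = 6$ edges. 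Hence the output of any edge surgery on a right-angled polyhedron has a face with $\geq 6$ edges, whereas every face of $L(5)$ is a pentagon; so $L(5)$ cannot occur as $P_k$ at the end of a nonempty chain of edge surgeries, contradicting $n_1 = 5$, $k \geq 1$.

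The step I expect to be the genuine obstacle is exactly this last one: when $P$ reduces through edge surgeries to $L(5)$, the volume chain by itself only yields $\vol(P) > \vol(L(5))$, which is far weaker than the needed bound $\vol(L(6))$, so a combinatorial input is unavoidable. The observation that an edge surgery always merges two faces into one of size $\geq 6$ --- so that the dodecahedron is never the output of an edge surgery --- is what closes the gap. Everything else is bookkeeping with Theorems \ref{organization}, \ref{lobellincreasing}, \ref{decompvoldecresing}, \ref{edgesurgeryvoldecreasing} and the data of Table 1.
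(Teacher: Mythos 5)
Your proposal is correct and follows essentially the same route as the paper: reduce to L\"obell polyhedra via Theorem \ref{organization}, use the monotonicity of $\vol(L(n))$, compare $2\vol(L(5))$ with $\vol(L(6))$, and observe that an edge surgery can never produce $L(5)$. Your explicit count showing the two faces containing the surgered edge merge into a face with at least $5+5-4=6$ edges is exactly the (tersely stated) justification behind the paper's remark that $L(5)$, having only pentagonal faces, cannot result from edge surgery; you simply spell out the equality/component-counting bookkeeping in more detail than the paper does.
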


\begin{proof}  By Theorem \ref{lobellincreasing}, the smallest L\"obell polyhedron is $L(5)$ while the second smallest is $L(6)$.  Note that there is no polyhedron which results in $L(5)$ when an edge surgery is performed as $L(5)$ has only pentagonal faces.  Furthermore, it is obvious that any composition of L\"obell polyhedra will have volume larger than that of $L(5)$, and so the first result follows.

The only possible polyhedron besides $L(5)$ whose volume is possibly not larger than that of $L(6)$ is a composition of two copies of $L(5)$.  However, the volume of such an object is $2\vol(L(5)) = 8.612... > \vol(L(6)) = 6.023...$.  This proves the second result.
\end{proof}

\bibliographystyle{halpha}
\bibliography{ref}

\end{document}